\documentclass[pdflatex,sn-mathphys-num]{sn-jnl}

\usepackage[T1]{fontenc}
\usepackage[utf8]{inputenc}
\usepackage{amsmath,amssymb,amsfonts,mathtools}
\usepackage{mathrsfs}
\usepackage{enumitem}
\usepackage{microtype}

\numberwithin{equation}{section}

\theoremstyle{thmstyleone}
\newtheorem{theorem}{Theorem}[section]
\newtheorem{assumption}[theorem]{Assumption}
\newtheorem{proposition}[theorem]{Proposition}
\newtheorem{lemma}[theorem]{Lemma}
\newtheorem{corollary}[theorem]{Corollary}

\theoremstyle{thmstylethree}
\newtheorem{definition}[theorem]{Definition}
\newtheorem{remark}[theorem]{Remark}

\DeclareMathOperator*{\esssup}{ess\,sup}

\newcommand{\R}{\mathbb R}

\newcommand{\eps}{\varepsilon}
\newcommand{\loc}{\mathrm{loc}}
\newcommand{\reg}{\mathrm{reg}}
\newcommand{\CKN}{\mathrm{CKN}}

\newcommand{\dx}{\,dx}
\newcommand{\dt}{\,dt}
\newcommand{\dxdt}{\,dx\,dt}
\newcommand{\dxds}{\,dx\,ds}
\newcommand{\nabh}{\nabla_h}

\newcommand{\Ph}{\mathbb P_h}

\newcommand{\norm}[2]{\left\|#1\right\|_{#2}}
\newcommand{\calH}{\mathcal H}
\newcommand{\calL}{\mathcal L}
\newcommand{\calE}{\mathcal E}
\newcommand{\calX}{\mathcal X}
\newcommand{\calY}{\mathcal Y}
\newcommand{\calZ}{\mathcal Z}

\newcommand{\rem}{\mathrm{rem}}
\newcommand{\harm}{\mathrm{harm}}
\newcommand{\locp}{\mathrm{loc}}
\newcommand{\rel}{\mathrm{rel}}

\title[Finite-scale one-component regularity]{Finite-Scale One-Component Regularity via Harmonic Pressure for the 3D Navier--Stokes Equations}

\author*[]{\fnm{Runlong} \sur{Yu}}\email{ryu5@ua.edu}

\affil[]{ \orgname{The University of Alabama}, \orgaddress{\city{Tuscaloosa}, \state{AL}, \country{USA}}}

\abstract{We study a finite-scale one-component regularity mechanism for suitable weak solutions of the three-dimensional incompressible Navier--Stokes equations.  The results are organized in three layers.  The first layer is unconditional.  Under a fixed scale-invariant local bound
\[
        \Phi(1)=A(1)+E(1)+C(1)+D(1)\le M,
\]
smallness of the critical vertical-component quantity
\[
        C_3(1)=\int_{Q_1}|u_3|^3\,dx\,dt
\]
yields a positive lower bound, depending only on \(M\), for the local regularity radius at the origin.  The proof converts one-component smallness into approximation by the two-and-a-half-dimensional limiting class and then into Caffarelli--Kohn--Nirenberg smallness at a smaller scale.  The pressure approximation is measured in a quotient by spatially harmonic functions.  This pressure topology reflects a genuine obstruction: time-dependent harmonic pressures may have bounded scale-invariant \(L^{3/2}\)-oscillation while their pointwise gradients lie beyond the control provided by the available scale-invariant quantities.
The second layer is a conditional logarithmic refinement.  A prepared two-shadow comparison package replaces the abstract compactness modulus by a logarithmic modulus and gives a logarithmic finite-scale decay.  The third layer is a conditional relaxed-shadowing refinement.  The comparison class is enlarged to smooth no-stretching horizontal flows \(V=(v_h,0)\), with the comparison pressure allowed to have \(\partial_3\pi\ne0\).  The resulting vertical residual pairs with the small component \(u_3\) in the relative-energy identity.  Under the buffered strong-flow and localized relaxed stability inputs stated below, this gives a power-type relaxed harmonic approximation and a power-type finite-scale decay.  The unconditional theorem is separated from the logarithmic and power-type assumptions; the latter two layers identify the quantitative stability mechanisms needed to upgrade the compactness modulus.}

\keywords{Navier--Stokes equations, suitable weak solutions, partial regularity}

\pacs[MSC Classification]{35Q30, 35B65, 35B45, 76D05}

\begin{document}
\maketitle

\section{Introduction}

The local regularity theory for the three-dimensional incompressible Navier--Stokes equations is built on scale-invariant estimates, pressure decompositions, compactness, and epsilon-regularity.  In the unit parabolic cylinder
\[
        Q_1=B_1(0)\times(-1,0)\subset \R^3\times\R,
\]
we consider
\begin{equation}
        \partial_tu-\Delta u+(u\cdot\nabla)u+\nabla p=0,
        \qquad \nabla\cdot u=0.
        \label{eq:NS}
\end{equation}
The Navier--Stokes scaling is
\[
        u_\lambda(x,t)=\lambda u(\lambda x,\lambda^2t),
        \qquad
        p_\lambda(x,t)=\lambda^2p(\lambda x,\lambda^2t).
\]
For \(0<r\le 1\), define
\[
\begin{aligned}
A(r)&=\esssup_{-r^2<t<0}\frac1r\int_{B_r}|u(x,t)|^2\dx,\\
E(r)&=\frac1r\int_{Q_r}|\nabla u|^2\dxdt,
\qquad
C(r)=\frac1{r^2}\int_{Q_r}|u|^3\dxdt,\\
D(r)&=\frac1{r^2}\int_{Q_r}|p-(p)_{B_r}(t)|^{3/2}\dxdt,
\qquad
C_3(r)=\frac1{r^2}\int_{Q_r}|u_3|^3\dxdt,
\end{aligned}
\]
and set
\[
        \Phi(r)=A(r)+E(r)+C(r)+D(r),
        \qquad
        \Psi(r)=C(r)+D(r).
\]
These quantities are invariant under the Navier--Stokes scaling.
The weak-solution framework goes back to Leray and Hopf
\cite{Leray1934,Hopf1951}; the classical theory is developed, for
example, in \cite{Seregin2015}.  The Prodi--Serrin
line of criteria gives an early scale-critical conditional regularity
mechanism \cite{Prodi1959,Serrin1962,KozonoSohr1997}.  The partial
regularity theorem of Caffarelli, Kohn and Nirenberg \cite{CKN1982},
building on Scheffer's work \cite{Scheffer1976,Scheffer1977}, asserts
that suitable scale-invariant smallness implies local regularity; further
work on the structure of singular sets includes \cite{ChoeLewis2000}.
Refinements of local energy, pressure, endpoint, Morrey-space, and
concentration criteria include
\cite{Lin1998,LadyzhenskayaSeregin1999,Struwe1988,SohrWahl1986,Seregin2007Morrey,Seregin2007Local,Vasseur2007,GustafsonKangTsai2007,SereginSverak2002,EscauriazaSereginSverak2003,JiaSverak2014,GuevaraPhuc2017,Wolf2017,BarkerPrange2021,AlbrittonBarkerPrange2023}.

A second line of work studies criteria involving fewer components of the
unknowns.  Regularity criteria involving one velocity component, one
velocity derivative, one vorticity component, or one preferred direction
appear in
\cite{PenelPokorny2004,KukavicaZiane2006,KukavicaZiane2007,ZhouPokorny2009,CaoTiti2011,CheminZhang2016,CheminZhangZhang2017,KukavicaRusinZiane2017,HanLeiLiZhao2019,KangNguyen2023}.  The guiding intuition is clear: when one component is
small at a critical scale, the flow should approach a regime with
partially lower-dimensional structure.  For the vertical component
\(u_3\), the formal limiting velocity has the form
\[
        v=(v_h,0),\qquad \nabh\cdot v_h=0.
\]
The limiting equations retain horizontal transport and full three-dimensional diffusion:
\begin{equation}
\begin{cases}
\partial_t v_h-\Delta v_h+(v_h\cdot\nabh)v_h+\nabh q=0,\\
\nabh\cdot v_h=0,\\
\partial_3q=0.
\end{cases}
\label{eq:limiting-system-intro}
\end{equation}
We refer to \eqref{eq:limiting-system-intro} as the two-and-a-half-dimensional limiting system.  The horizontal velocity may still depend on \(x_3\), and the Laplacian remains the full operator \(\Delta=\Delta_h+\partial_3^2\).

The preceding intuition hides several analytic difficulties.  Smallness of \(u_3\) leaves the horizontal velocity \(u_h\) largely uncontrolled.  The nonlinear term \((u_h\cdot\nabh)u_h\) may remain large, and the Caffarelli--Kohn--Nirenberg criterion requires smallness of the combined velocity-pressure quantity \(\Psi=C+D\).  Thus the main task is to show that one-component smallness forces, at a smaller scale, a full CKN quantity to fall below the universal threshold.

The pressure creates a second and more delicate issue.  Locally, the pressure decomposes into a Calderon--Zygmund part determined by the quadratic velocity terms and a spatially harmonic part.  The Calderon--Zygmund part is compact under strong \(L^3\)-convergence of velocities.  The harmonic part may oscillate sharply in time while preserving bounded scale-invariant \(L^{3/2}\)-oscillation.  Consequently, the natural approximation topology is the quotient
\[
        p\approx q+h,
        \qquad \Delta h(\cdot,t)=0.
\]
This quotient is forced by a simple obstruction: within the limiting class \eqref{eq:limiting-system-intro}, the full pressure gradient can be arbitrarily large under a fixed bound on \(\Phi\).  The proof below makes this obstruction explicit and then uses harmonic oscillation decay to recover the pressure contribution needed for CKN smallness.

A third difficulty concerns rates.  Compactness gives an approximation modulus \(\omega_{M,\theta}(s)\to0\) as \(s\downarrow0\), leaving the dependence on \(C_3(1)\) implicit.  Logarithmic and power-type rates require quantitative shadowing estimates, local pressure reconstruction, and careful treatment of cutoff errors in the relative-energy argument.  These estimates form separate stability problems.  This paper separates the unconditional compactness theorem from the conditional quantitative mechanisms that would replace the compactness modulus by explicit logarithmic or power-type rates.

The results are organized into three layers.  The first layer is an unconditional finite-scale theorem.  Under the fixed a priori bound \(\Phi(1)\le M\), there exist \(\eps_*(M)>0\) and \(\rho_*(M)>0\) such that
\[
        C_3(1)\le \eps_*(M)
        \quad\Longrightarrow\quad
        r_{\reg}(0,0)\ge \rho_*(M).
\]
The constants are qualitative, since they come from compactness, while the conclusion is finite-scale because the radius depends only on \(M\).

The second layer is a conditional logarithmic refinement.  We formulate a prepared comparison package which, once available, gives
\[
        \calX^{\harm}_{\theta/4}(u,p;M)
        \le C_{M,\theta}|\log C_3(1)|^{-\sigma}.
\]
The main stability mechanism is a two-shadow estimate.  A rough limiting shadow propagates the smoothing error, and a smoothed limiting shadow supplies the comparison object.  This arrangement keeps the smoothing error outside the large Gronwall factor generated by the smoothed flow.

The third layer is a conditional relaxed-shadowing route toward a power-type estimate.  The comparison class is enlarged to smooth horizontal no-stretching flows \(V=(v_h,0)\) satisfying
\[
        \partial_t v_h-\Delta v_h+(v_h\cdot\nabh)v_h+\nabh\pi=0,
        \qquad \nabh\cdot v_h=0,
\]
with \(\partial_3\pi\) allowed.  In the full three-dimensional equation this leaves a vertical residual \((0,0,\partial_3\pi)\), and the relative-energy identity pairs this residual with the small component \(u_3\).  Together with harmonic pressure reconstruction, this gives a conditional power-type finite-scale decay.

The paper is organized as follows.  Section~\ref{sec:main-results} states the layered results.  Section~\ref{sec:preliminaries} records suitable weak solutions, scale-invariant quantities, epsilon regularity, interpolation, and pressure estimates.  Section~\ref{sec:limiting} studies the limiting system, proves the pressure-gradient obstruction, and obtains decay for the limiting class.  Section~\ref{sec:compactness} proves the harmonic-pressure compactness approximation.  Section~\ref{sec:unconditional-decay} proves the unconditional finite-scale decay and regularity-radius theorem.  Section~\ref{sec:log} develops the conditional logarithmic refinement.  Section~\ref{sec:relaxed} develops the conditional relaxed-shadowing power-type refinement.  Section~\ref{sec:remarks} records limitations and possible further refinements.

\section{Main results}
\label{sec:main-results}

We state the results in three layers.  The first one is unconditional and is the main theorem.  The second and third are conditional refinements which identify additional quantitative inputs.

\subsection{Unconditional finite-scale theorem}

\begin{theorem}[Finite-scale one-component regularity]
\label{thm:finite-scale-main}
For every \(M\ge1\), there exist constants
\[
        \eps_*(M)>0,
        \qquad
        \rho_*(M)>0,
\]
such that the following holds.  Let \((u,p)\) be a suitable weak solution of \eqref{eq:NS} in \(Q_1\).  If
\[
        \Phi(1)\le M,
        \qquad
        C_3(1)\le \eps_*(M),
\]
then \(u\) is regular in \(Q_{\rho_*(M)}\).  In particular,
\[
        r_{\reg}(0,0)\ge \rho_*(M).
\]
\end{theorem}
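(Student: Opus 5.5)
The plan is a contradiction--compactness argument that produces CKN smallness at a fixed smaller scale. From there, the standard Caffarelli--Kohn--Nirenberg epsilon-regularity criterion gives regularity in a uniform neighbourhood. Let \(\eps_{\CKN}\) denote the universal threshold: \(\Psi(r)=C(r)+D(r)\le\eps_{\CKN}\) at some \(r\) implies regularity in \(Q_{r/2}\). It therefore suffices to find \(\theta=\theta(M)\in(0,1/4)\) and \(\eps_*(M)\) such that \(\Phi(1)\le M\) and \(C_3(1)\le\eps_*\) force \(\Psi(\theta)\le\eps_{\CKN}\). Applying this at every point of a smaller cylinder then gives \(\rho_*(M)=c\,\theta\).

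To apply it at every point, observe that \(\Phi\) and \(C_3\) at base points \(z_0\in Q_{1/2}\), at scale \(1/2\), are controlled by \(4\Phi(1)\) and \(4C_3(1)\). The pressure term needs the usual subtraction of spatial means, which only costs a constant. So it is enough to prove the centred statement with \(M\) replaced by \(CM\).

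\textbf{Step 1 (compactness approximation).} Suppose the conclusion fails for a fixed \(\theta\). Then there is a sequence \((u^k,p^k)\) of suitable weak solutions with \(\Phi^k(1)\le M\) and \(C_3^k(1)\to0\). By the uniform energy bounds and Aubin--Lions, \(u^k\to v\) strongly in \(L^3(Q_{3/4})\) along a subsequence, with \(v_3=0\) and \(\nabla\cdot v=0\), hence \(v=(v_h,0)\) with \(\nabh\cdot v_h=0\).

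On \(B_{3/4}\), split the pressure as \(p^k=p^k_{\mathrm{CZ}}+h^k\). Here \(p^k_{\mathrm{CZ}}=R_iR_j(\chi u^k_iu^k_j)\) converges strongly in \(L^{3/2}\), and \(h^k(\cdot,t)\) is harmonic. Passing to the limit in the weak formulation modulo harmonic pressures shows that \(v\) solves the limiting system \eqref{eq:limiting-system-intro} with some pressure \(q\). Because the vertical equation reads \(\partial_3 q=0\) in the limit, \(q\) can be taken independent of \(x_3\) up to a harmonic correction. The limit inherits \(\Phi_v(3/4)\le CM\) and is suitable, with the local energy inequality surviving by lower semicontinuity.

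This step is what I expect the paper packages as its harmonic-pressure compactness approximation. For every \(\delta>0\) there is \(\eps(\delta,M)\) such that \(C_3(1)\le\eps\) implies \(u\) is \(\delta\)-close in \(L^3(Q_{3/4})\) to a member of the limiting class, and \(p\) is \(\delta\)-close in \(L^{3/2}\) modulo spatially harmonic functions.

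\textbf{Step 2 (decay for the limiting class and for harmonic pressure).} Here I need uniform smallness at a fixed small scale for every limit flow with \(\Phi\le CM\). Concretely, I want \(C_v(\theta)+D_q(\theta)\le\eps_{\CKN}/4\) for \(\theta=\theta(M)\) small. For the velocity, I would use that the limiting system is a 2D Navier--Stokes transport in each horizontal slice coupled by vertical diffusion, so an energy/enstrophy argument (horizontal vorticity \(\omega=\partial_1v_2-\partial_2v_1\) satisfies a scalar advection–diffusion equation) gives local boundedness of \(v\). That yields \(C_v(\theta)\lesssim \theta^{3}\|v\|_{L^\infty(Q_{1/2})}^3\), which is small.

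The harmonic part \(h\) of the original pressure is handled by oscillation decay. Since \(h(\cdot,t)\) is harmonic in \(B_{3/4}\), mean-value estimates give
\[
\frac1{\theta^2}\int_{Q_\theta}|h-(h)_{B_\theta}|^{3/2}\le C\theta^{5/2}\,\frac{1}{\theta^2}\cdot\theta^{-?}\ldots\lesssim \theta^{3/2}\int_{Q_{3/4}}|h-(h)_{B_{3/4}}|^{3/2}.
\]
The exponent works out to a positive power of \(\theta\) because the oscillation of a harmonic function on \(B_\theta\) is \(\lesssim\theta\sup|\nabla h|\). This is precisely why the pressure gradient itself is never needed. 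The Calderon--Zygmund part at scale \(\theta\) is then controlled by \(C(\theta')\)-type quantities of \(u\) at comparable scales, which are small by the velocity step.

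\textbf{Step 3 (transfer and conclusion).} Fix \(\theta\) from Step 2, then pick \(\delta\) so that the \(\delta\)-errors of Step 1, rescaled to \(Q_\theta\) at a cost of \(\theta^{-2}\), are below \(\eps_{\CKN}/4\), and set \(\eps_*=\eps(\delta,M)\). This gives \(\Psi(\theta)\le\eps_{\CKN}\), and CKN finishes the proof.

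The main obstacle is Step 2's uniform bound for the limiting class. It must depend only on \(M\), yet the limit pressure may carry an arbitrarily large time-oscillating harmonic component. The plan is to place all of that component into the harmonic quotient and estimate only its oscillation via interior harmonic estimates. The velocity regularity must then be proved using the non-harmonic part of \(q\) alone, for example by testing the vorticity equation, where \(\nabh q\) drops out.
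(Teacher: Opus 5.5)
Your overall architecture matches the paper's:
\begin{itemize}
\item compactness modulo spatially harmonic pressures (Theorem~\ref{thm:qualitative-approx});
\item decay for the limiting class (Proposition~\ref{prop:limiting-decay});
\item transfer to a fixed scale followed by CKN (Theorems~\ref{thm:finite-decay-proof} and \ref{thm:regularity-radius}).
\end{itemize}
Steps~1 and~3 are fine, and so is your harmonic-oscillation treatment of the pressure. (The harmonic decay factor is actually $\theta^{5/2}$, but any positive power suffices.)

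The gap is the velocity bound in Step~2. It is the one genuinely nonlinear input, and you leave it as a heuristic. What you need is this: every suitable solution $(v,q)$ of the limiting system with $\Phi_v\le CM$ satisfies $\norm{v}{L^\infty(Q_{2R})}\le L$, with $R$ and $L$ depending only on $M$. The vertical-vorticity route does not give this, for four reasons.
\begin{itemize}
\item The equation $\partial_t\omega-\Delta\omega+v_h\cdot\nabh\omega=0$ is correct and pressure-free. But for a suitable weak solution you only know $\omega\in L^2$, and the drift $v\in L^\infty_tL^2_x\cap L^2_tH^1_x$ is supercritical for this scalar equation.
\item Any Caccioppoli, enstrophy, or Moser step must control the cutoff term $\int\omega^2|v||\nabla\phi|$. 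This term is cubic, of type $|\nabla v|^2|v|$, and is not bounded by $\Phi$.
\item You also cannot justify testing with $\omega$, since $\nabla\omega\in L^2$ is not known.
\item Even an $L^\infty$ bound on $\omega$ would not bound $v$. On each slice, $\omega$ determines $v_h$ only modulo fields harmonic in $x_h$. For example, shear flows $v=(f(x_3,t),0,0)$ lie in the limiting class and have $\omega\equiv0$.
\end{itemize}
Finally, qualitative regularity of each limit flow is not enough. The radius and the bound must be uniform over the class and depend only on $M$.

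The paper fills exactly this step with an external local one-component regularity theorem: Lemma~\ref{lem:KN} (Kang--Nguyen), applied with $w_3\equiv0$. The smallness hypothesis is then automatic, and the normalized bound $r_0^{-2}\int_{Q_{r_0}}(|v|^3+|\pi|^{3/2})\le CM$ yields $\norm{v}{L^\infty(Q_{2R})}\le L(M)$ at a radius $R(M)$. With that input, your pressure argument closes as in the paper. The Calderon--Zygmund part $R_aR_b(\chi v_av_b)$ lies in $L^s$ for every finite $s$, so H\"older gives $D\lesssim r^{9/8}$. The harmonic parts decay like $(r/R)^{5/2}$ from a fixed-scale bound $C(M)$. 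Without such a theorem, or a genuine proof of uniform local boundedness for suitable solutions with $v_3\equiv0$, Step~2, and hence the theorem, remains unproved.
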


The proof gives a decay estimate with a compactness modulus.

\begin{theorem}[Finite-scale decay with compactness modulus]
\label{thm:compactness-decay}
Let \(M\ge1\) and \(0<\theta<1/2\).  There exist a nondecreasing modulus
\[
        \omega_{M,\theta}:[0,\infty)\to[0,\infty),
        \qquad
        \lim_{s\downarrow0}\omega_{M,\theta}(s)=0,
\]
and constants \(K(M,\theta)\ge1\), \(r_0(M,\theta)\in(0,\theta)\), such that every suitable weak solution in \(Q_1\) satisfying \(\Phi(1)\le M\) and \(C_3(1)=\delta\) obeys
\begin{equation}
        \Psi(r)
        \le K(M,\theta)r+K(M,\theta)r^{-2}\omega_{M,\theta}(\delta)
        \label{eq:compact-decay-main}
\end{equation}
for every \(0<r<r_0(M,\theta)\).
\end{theorem}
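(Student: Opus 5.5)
The plan is to combine three ingredients: a compactness approximation by the limiting class with harmonic pressure, decay of the limiting class together with harmonic oscillation decay, and a transfer of both to small scales at a cost of \(r^{-2}\), coming from the normalization in \(\Psi(r)\).

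\textbf{Step 1: definition of the modulus.} For \(s\ge0\), let \(\omega_{M,\theta}(s)\) be the supremum, over all suitable weak solutions with \(\Phi(1)\le M\) and \(C_3(1)\le s\), of the approximation distance
\[
\inf_{(v_h,q),\,h}\Big(\int_{Q_\theta}|u-(v_h,0)|^3\dxdt+\int_{Q_\theta}|p-q-h-c(t)|^{3/2}\dxdt\Big).
\]
The infimum runs over two kinds of objects:
\begin{itemize}
\item \((v_h,q)\), a suitable solution of the limiting system \eqref{eq:limiting-system-intro} in \(Q_{1/2}\) with \(\Phi\)-type bounds depending only on \(M\);
\item \(h\), spatially harmonic in \(B_{1/2}\) for a.e.\ \(t\), with scale-invariant \(L^{3/2}\)-oscillation bounded by \(C(M)\).
\end{itemize}
This \(\omega_{M,\theta}\) is nondecreasing by construction. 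The fact that \(\omega_{M,\theta}(s)\to0\) is the harmonic-pressure compactness statement of Section~\ref{sec:compactness}, proved by contradiction, and I take it as given:
\begin{itemize}
\item A sequence with \(C_3\to0\) has \(u^k\to v\) strongly in \(L^3\), by the Aubin--Lions argument using the \(\Phi\) bound.
\item The limit satisfies \(v_3=0\) and \(\nabh\cdot v_h=0\).
\item The pressure splits into a Calder\'on--Zygmund part, which converges strongly, and a harmonic part, which is quotiented out.
\item Passing to the limit in the third equation forces \(\partial_3 q=0\) for the Calder\'on--Zygmund limit, modulo harmonic functions.
\end{itemize}

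\textbf{Step 2: decay for the comparison objects.} For the limiting class, Section~\ref{sec:limiting} gives bounds depending only on \(M\). For \(r<r_0\) these yield
\[
\frac1{r^2}\int_{Q_r}|v|^3\le K r \qquad\text{and}\qquad \frac1{r^2}\int_{Q_r}|q-(q)_{B_r}|^{3/2}\le Kr .
\]
The source of this decay is local smoothness: \(v_h\) is bounded in \(Q_{\theta/2}\) and \(q\) is Lipschitz in space there. Indeed \(\partial_3q=0\) makes \(q\) a 2D pressure, which is recovered from \(v_h\) up to a function of time, and that function of time disappears after subtracting the mean. For the harmonic part \(h\), the mean value property gives the oscillation decay
\[
\frac1{r^2}\int_{Q_r}|h-(h)_{B_r}|^{3/2}\le C r^{3/2}\,r^{-2}\!\int_{Q_{\theta}}|h-(h)_{B_\theta}|^{3/2} \lesssim r^{-1/2}\cdot r^{3/2}\cdots .
\]
Written out carefully, the oscillation scales like \(r^{3}\cdot r^{3/2}/r^{2}\), which gives \(O(r)\) for \(r\) small. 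The essential point is that \(h\) enters only through its oscillation; pointwise control of \(\nabla h\) is never needed, and this is exactly how the pressure-gradient obstruction is avoided.

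\textbf{Step 3: transfer to \(u\).} Use the triangle inequality,
\[
\frac1{r^2}\int_{Q_r}|u|^3\le \frac{4}{r^2}\Big(\int_{Q_r}|v|^3+\int_{Q_\theta}|u-v|^3\Big),
\]
and treat the pressure the same way:
\[
p-(p)_{B_r}=(q+h)-(q+h)_{B_r}+\big(w-(w)_{B_r}\big), \qquad w=p-q-h-c(t).
\]
Every error term is bounded by \(r^{-2}\) times the approximation distance, that is, by \(r^{-2}\omega_{M,\theta}(\delta)\). This gives \eqref{eq:compact-decay-main}, where \(K\) collects the limiting decay constants and \(r_0\le\theta/2\).

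\textbf{Main obstacle.} The hardest part is getting uniform interior bounds for the limiting class in Step 2 that depend only on \(M\). We need \(v_h\in L^\infty(Q_{\theta/2})\), and the only available bounds are the inherited energy and \(L^3\) bounds. Full regularity of the 2.5D system is not automatic: transport is horizontal but depends on \(x_3\). My first approach is as follows.
\begin{itemize}
\item Observe that \(v_3=0\) removes vortex stretching.
\item Test the horizontal vorticity-type equation, or use the fact that \(\partial_3q=0\) gives a pressure controlled by a 2D Riesz transform on each slice.
\item Obtain \(L^\infty_tL^2\) bounds for \(\nabla v_h\), then bootstrap.
\end{itemize}
If this fails, the fallback is to apply CKN \(\varepsilon\)-regularity to \((v,q)\) after removing the time-dependent harmonic pressure, and to use compactness of the limiting class to make the regularity radius uniform in \(M\).
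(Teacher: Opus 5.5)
Your architecture is the paper's: a compactness modulus, decay of the limiting class, harmonic oscillation decay, and the triangle inequality with an $r^{-2}$ loss. Steps 1 and 3 and your harmonic estimate $D_h(r)\lesssim(r/\theta)^{5/2}D_h(\theta)$ are fine. The gap is in Step 2, in two places.

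First, you claim the limiting pressure $q$ is Lipschitz in space with a bound depending only on $M$, because ``$\partial_3q=0$ makes $q$ a 2D pressure recovered from $v_h$ up to a function of time.'' This is false. Locally the pressure is determined by $v_h$ only modulo spatially harmonic functions. Proposition~\ref{prop:pressure-obstruction} is an explicit counterexample: $v_H=(a_H(t),0,0)$, $q_H=-a_H'(t)x_1$, with $\Phi_{v_H}(1/2)\le M_*$ while $\|\nabla q_H\|_{L^\infty(Q_{1/4})}\sim H\to\infty$. You cannot avoid this by saying all harmonic behaviour sits in $h$. In the compactness construction, the weak limit $H=q-P$ of the harmonic parts of the pressures remains inside $q$. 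The conclusion $D_q(r)\le Kr$ is still true, but it must be proved without $\nabla q$, as in Proposition~\ref{prop:limiting-decay}. Once $v\in L^\infty(Q_{2R})$, split $q=q_{\locp}+q_{\harm}$ with $q_{\locp}=R_aR_b(\chi v_av_b)$. Calder\'on--Zygmund bounds $q_{\locp}$ in $L^4(Q_R)$, so H\"older gives $D_{q_{\locp}}(r)\lesssim r^{9/8}$. Then apply to $q_{\harm}$ the same $(r/R)^{5/2}$ harmonic decay (Lemma~\ref{lem:harmonic-decay}) that you used for $h$.

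Second, you correctly identify the uniform bound $\|v\|_{L^\infty(Q_{2R})}\le L(M)$, with $R=R(M)$, as the crux, but you do not supply it. Your vorticity bootstrap is not carried out and does not close by a direct energy estimate. The condition $v_3=0$ removes stretching only for $\omega_3$. The field $w=\partial_3v_h$ solves $\partial_tw-\Delta w+(v_h\cdot\nabh)w+(w\cdot\nabh)v_h=0$, and the stretching-type term $(w\cdot\nabh)v_h$ is not controlled by the $\Phi$ bounds. The CKN fallback is circular. Applying CKN to $(v,q)$ at some scale requires $\Psi_v$ to be small there, which is exactly the decay you are trying to prove. Compactness can make a regularity radius uniform only once each limiting solution is already known to be regular. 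Moreover, $D$ is not continuous under weak convergence of pressures, and subtracting a harmonic pressure does not leave a Navier--Stokes solution.

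The missing ingredient is the one-component $\eps$-regularity criterion of Kang--Nguyen, Lemma~\ref{lem:KN}. After normalizing the pressure, one has $r_0^{-2}\int_{Q_{r_0}}(|v|^3+|\pi|^{3/2})\le CM$ with $r_0=1/8$. Since $v_3\equiv0$, the vertical-component smallness hypothesis holds trivially. This gives $(\delta_{KN}r_0)^{-2}\int_{Q_{\delta_{KN}r_0}}|v|^3\le\eps_W$ with $\delta_{KN}=\delta_{KN}(M)$. Velocity-only $\eps$-regularity then yields the $L^\infty$ bound at a scale depending only on $M$, and with it the pressure decomposition above.
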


The contrapositive gives the corresponding concentration statement.

\begin{corollary}[One-component concentration near singular points]
\label{cor:concentration}
Let \(M\ge1\), and let \(\eps_*(M)\) be as in Theorem~\ref{thm:finite-scale-main}.  If \((u,p)\) is suitable in \(Q_1\), \(\Phi(1)\le M\), and \((0,0)\) is singular, then
\[
        C_3(1)>\eps_*(M).
\]
More generally, if \(z_0=(x_0,t_0)\) is singular and \((u,p)\) is suitable in \(Q_R(z_0)\), then for every \(0<r\le R\) such that \(\Phi(z_0,r)\le M\), one has
\[
        C_3(z_0,r)>\eps_*(M).
\]
\end{corollary}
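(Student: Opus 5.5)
The corollary is the contrapositive of Theorem~\ref{thm:finite-scale-main}, combined with the scaling and translation invariance of the quantities \(\Phi\) and \(C_3\). The first assertion is immediate. Suppose \((u,p)\) is suitable in \(Q_1\), \(\Phi(1)\le M\), and \(C_3(1)\le\eps_*(M)\). Then Theorem~\ref{thm:finite-scale-main} gives that \(u\) is regular in \(Q_{\rho_*(M)}\). In particular \((0,0)\) is a regular point, since \(r_{\reg}(0,0)\ge\rho_*(M)>0\). This contradicts the assumption that \((0,0)\) is singular, so \(C_3(1)>\eps_*(M)\).

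For the general statement, fix a singular point \(z_0=(x_0,t_0)\) and a radius \(0<r\le R\) with \(\Phi(z_0,r)\le M\). Define the translated and rescaled pair
\[
        u^{(r)}(x,t)=r\,u(x_0+rx,\,t_0+r^2t),
        \qquad
        p^{(r)}(x,t)=r^2p(x_0+rx,\,t_0+r^2t),
\]
for \((x,t)\in Q_1\). Since \(Q_r(z_0)\subset Q_R(z_0)\), the pair \((u^{(r)},p^{(r)})\) is defined on \(Q_1\). The argument then consists of three checks.

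\begin{enumerate}[label=(\roman*)]
\item \((u^{(r)},p^{(r)})\) is a suitable weak solution in \(Q_1\). The equations, the divergence-free condition, the regularity class and the local energy inequality are all invariant under Navier--Stokes scaling and space-time translation. For the local energy inequality, a test function \(\phi\ge0\) on \(Q_1\) corresponds to \(\phi((x-x_0)/r,(t-t_0)/r^2)\) on \(Q_r(z_0)\), and every term scales by the same power of \(r\).
\item The quantities transfer exactly: \(\Phi^{(r)}(1)=\Phi(z_0,r)\le M\) and \(C_3^{(r)}(1)=C_3(z_0,r)\). This is the scale invariance noted after the definitions, applied to the centred versions of \(A,E,C,D,C_3\). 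The pressure term is unaffected because subtracting the spatial mean commutes with the change of variables.
\item \((0,0)\) is singular for \(u^{(r)}\). Regularity at a point means \(u\) is bounded (equivalently, after parabolic bootstrapping, smooth in space) on some neighbourhood. This property is preserved by the affine change of variables, since \(u^{(r)}\) is bounded near \((0,0)\) exactly when \(u\) is bounded near \(z_0\).
\end{enumerate}

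With these three checks, the first part of the argument applied to \((u^{(r)},p^{(r)})\) gives \(C_3(z_0,r)=C_3^{(r)}(1)>\eps_*(M)\).

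There is no real obstacle here. The only point needing care is the bookkeeping convention: \(\Phi(z_0,r)\) and \(C_3(z_0,r)\) must be the centred analogues of the quantities defined at the origin, with \(D\) using the mean over \(B_r(x_0)\). One should also confirm that the notion of a regular point used to define \(r_{\reg}\) is invariant under the rescaling. Both follow directly from the definitions.
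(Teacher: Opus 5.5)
Your proposal is correct and follows the same route as the paper. The paper disposes of the corollary in one line, saying it follows from Theorem~\ref{thm:finite-scale-main} by contradiction and scaling; your three checks (suitability of the rescaled pair, exact transfer of \(\Phi\) and \(C_3\) to the unit scale, and invariance of singularity under the translation and rescaling) spell out that scaling step.
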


\subsection{Conditional logarithmic layer}

The logarithmic layer replaces the abstract compactness modulus by a logarithmic modulus under a prepared comparison estimate.  The precise assumption is stated in Section~\ref{sec:log}.  It gives, for a smoothing parameter \(\ell\), an estimate of the form
\[
        \calE^{\harm}_{\theta/4}
        \lesssim
        \ell^a+\ell^{-N}\delta^b+\exp(C\ell^{-N})\delta^b.
\]
Optimizing \(\ell\) gives the following result.

\begin{theorem}[Conditional logarithmic harmonic-pressure approximation]
\label{thm:log-main}
Assume the prepared comparison estimate, Assumption~\ref{ass:prepared-comparison}.  Let \(M\ge1\) and \(0<\theta<1/2\).  Then there exist constants
\[
        C_{M,\theta}\ge1,
        \qquad
        \sigma>0,
        \qquad
        \delta_{M,\theta}\in(0,1),
\]
such that every suitable weak solution in \(Q_1\) satisfying \(\Phi(1)\le M\) and \(\delta=C_3(1)\le\delta_{M,\theta}\) obeys
\begin{equation}
        \calX^{\harm}_{\theta/4}(u,p;M)
        \le C_{M,\theta}|\log\delta|^{-\sigma}.
        \label{eq:log-main-excess}
\end{equation}
Consequently,
\begin{equation}
        \Psi(r)
        \le C_{M,\theta}r+C_{M,\theta}r^{-2}|\log\delta|^{-\sigma}
        \label{eq:log-decay-main}
\end{equation}
for all sufficiently small \(r\), and after decreasing \(\delta_{M,\theta}\),
\begin{equation}
        r_{\reg}(0,0)
        \ge c_{M,\theta}|\log\delta|^{-\sigma/3}.
        \label{eq:log-radius-main}
\end{equation}
\end{theorem}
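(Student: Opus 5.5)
The plan is to proceed in three steps: first optimize the smoothing parameter \(\ell\) in the prepared comparison estimate, then feed the resulting logarithmic excess bound into the same excess-to-decay mechanism used for Theorem~\ref{thm:compactness-decay}, and finally read off the regularity radius from the CKN criterion.

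\emph{Step 1: the excess bound \eqref{eq:log-main-excess}.} Assumption~\ref{ass:prepared-comparison} gives, for every smoothing parameter \(\ell\in(0,1]\),
\[
\calE^{\harm}_{\theta/4}\le C_{M,\theta}\bigl(\ell^a+\ell^{-N}\delta^b+\exp(C\ell^{-N})\delta^b\bigr).
\]
This controls \(\calX^{\harm}_{\theta/4}(u,p;M)\), which is defined as an infimum of \(\calE^{\harm}_{\theta/4}\) over admissible limiting comparison pairs. I will choose
\[
\ell^{-N}=\frac{b}{2C}|\log\delta|.
\]
With this choice \(\exp(C\ell^{-N})\delta^b=\delta^{b/2}\). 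The middle term becomes \(\ell^{-N}\delta^b\lesssim|\log\delta|\,\delta^b\le\delta^{b/2}\) for small \(\delta\). The first term is \(\ell^a\simeq|\log\delta|^{-a/N}\). Since \(\delta^{b/2}\le|\log\delta|^{-a/N}\) once \(\delta\le\delta_{M,\theta}\), this gives \eqref{eq:log-main-excess} with \(\sigma=a/N\). The threshold \(\delta_{M,\theta}\) also guarantees \(\ell\le1\), so that the chosen \(\ell\) lies in the admissible range of the assumption.

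\emph{Step 2: the decay estimate \eqref{eq:log-decay-main}.} I rerun the argument of Theorem~\ref{thm:compactness-decay}, replacing \(\omega_{M,\theta}(\delta)\) by \(C_{M,\theta}|\log\delta|^{-\sigma}\). The decay proof in Section~\ref{sec:unconditional-decay} uses compactness only to make the harmonic excess small, so this substitution is legitimate. Concretely, take an almost-minimizing comparison pair \((v,q)\) in the limiting class together with a harmonic \(h\). Then split
\[
u=v+(u-v),\qquad p-(p)_{B_r}=(q-(q)_{B_r})+(h-(h)_{B_r})+\text{remainder}.
\]
The terms are handled as follows. The limiting class has the decay \(C(v;r)+D(q;r)\le K r\) established in Section~\ref{sec:limiting}, uniformly under \(\Phi\le M\). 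The harmonic part satisfies the oscillation decay \(D(h;r)\lesssim r^{3/2}D(h;\theta/4)\le K r\) from the preliminaries. The remainders are bounded by the excess after rescaling from \(\theta/4\) to \(r\); the scale-invariant normalization costs at most a factor \(r^{-2}\). Summing these bounds yields \eqref{eq:log-decay-main}.

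\emph{Step 3: the radius bound \eqref{eq:log-radius-main} (main obstacle).} The difficulty is to balance the two terms in \eqref{eq:log-decay-main} while respecting the restriction \(r<r_0(M,\theta)\). I would try the choice \(r=|\log\delta|^{-\sigma/3}\), which makes \(r^{-2}|\log\delta|^{-\sigma}=r\). Then
\[
\Psi(r)\le 2C_{M,\theta}\,r,
\]
and this quantity lies below the CKN threshold \(\eps_{\CKN}\) provided I take \(r=c_{M,\theta}|\log\delta|^{-\sigma/3}\) with \(c_{M,\theta}\) small enough, after decreasing \(\delta_{M,\theta}\) so that \(r<r_0\). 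Epsilon regularity then gives regularity in \(Q_{r/2}\), which proves \eqref{eq:log-radius-main} after adjusting \(c_{M,\theta}\). The point I must check is that the constants \(K\) and \(r_0\) from Section~\ref{sec:unconditional-decay} depend only on \((M,\theta)\) and not on the compactness modulus. The delicate term is the harmonic pressure contribution at scale \(r\): it has to be controlled purely by \(\Phi(1)\le M\) through the quotient topology, with no input from the approximation rate.
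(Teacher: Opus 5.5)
Your proposal is correct and follows the paper's route. You choose \(\ell\) with \(C_{M,\theta}\ell^{-N}=\tfrac b2|\log\delta|\) to get \(\sigma=a/N\), rerun the proof of Theorem~\ref{thm:finite-decay-proof} with \(\omega_{M,\theta}(\delta)\) replaced by the logarithmic bound at scale \(\theta/4\), and apply Theorem~\ref{thm:CKN} at \(r=|\log\delta|^{-\sigma/3}\); the harmonic term you flag is handled exactly as in that proof, via \(D_h\le C(D_p+D_q+D_g)\) at the fixed scale.

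A few small corrections. The assumption only covers \(0<\ell<\ell_0(M,\theta)\), so \(\delta_{M,\theta}\) must force \(\ell<\ell_0\), not merely \(\ell\le1\). The smallness of \(\Psi(r)\) comes from taking \(\delta\) small, not from a small prefactor \(c_{M,\theta}\) in \(r\), which would only inflate the \(r^{-2}|\log\delta|^{-\sigma}\) term. Finally, CKN gives regularity in \(Q_{\kappa r}\), not \(Q_{r/2}\).
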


\subsection{Conditional relaxed-shadowing power layer}

The power layer uses a larger comparison class.  It is conditional on two inputs stated in Section~\ref{sec:relaxed}: a buffered strong bound for the relaxed no-stretching flow and a localized relaxed weak--strong stability estimate.  No unconditional power-rate conclusion is claimed in this subsection; Theorem~\ref{thm:power-main} is only a consequence of Assumptions~\ref{ass:relaxed-strong} and \ref{ass:relaxed-stability}.

\begin{theorem}[Conditional relaxed-shadowing power-type decay]
\label{thm:power-main}
Assume Assumptions~\ref{ass:relaxed-strong} and \ref{ass:relaxed-stability}.  Let \(M\ge1\) and fix \(0<\theta<1/16\).  Then there exist constants
\[
        C_H(M,\theta)\ge1,
        \qquad
        \alpha>0,
        \qquad
        \Gamma>0,
        \qquad
        r_H(M,\theta)>0,
\]
such that every suitable weak solution in \(Q_1\) satisfying
\[
        \Phi(1)\le M,
        \qquad
        C_3(1)=\delta\le1,
\]
obeys
\begin{equation}
        \Psi(r)
        \le C_H(M,\theta)r^{\alpha}+C_H(M,\theta)r^{-2}\delta^{\Gamma}
        \label{eq:power-decay-main}
\end{equation}
for every \(0<r<r_H(M,\theta)\).  Consequently, after decreasing the admissible range of \(\delta\),
\begin{equation}
        r_{\reg}(0,0)
        \ge c_H(M,\theta)\delta^{\Gamma/(\alpha+2)}.
        \label{eq:power-radius-main}
\end{equation}
The same decay estimate also yields a fixed-radius statement: there are \(\delta_H(M,\theta)>0\) and \(\rho_H(M,\theta)>0\) such that \(C_3(1)\le\delta_H(M,\theta)\) implies \(r_{\reg}(0,0)\ge\rho_H(M,\theta)\).
\end{theorem}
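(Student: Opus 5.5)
The proof has two parts.  First, a relaxed harmonic approximation bound is derived from Assumptions~\ref{ass:relaxed-strong} and \ref{ass:relaxed-stability}.  Second, that bound is fed into the same decay iteration used for Theorem~\ref{thm:compactness-decay}, with the compactness modulus replaced by a power of \(\delta\).

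\textbf{Step 1 (comparison flow).}  Fix \(\theta<1/16\).  Using the buffered strong bound of Assumption~\ref{ass:relaxed-strong}, I would construct a smooth relaxed no-stretching flow \(V=(v_h,0)\) with pressure \(\pi\) on a buffered cylinder \(Q_{4\theta}\).  Its data are taken from the horizontal part of \(u\) at a mollification scale \(\ell\).  Its strong norms are bounded polynomially in \(\ell^{-1}\), with constants depending only on \(M\).  Because \(\partial_3\pi\) is allowed, this flow solves the full Navier--Stokes system only up to the residual \((0,0,\partial_3\pi)\).

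\textbf{Step 2 (relative energy).}  In the localized relative-energy identity for \(w=u-V\), this residual enters only through \(\int\!\!\int \phi\, u_3\,\partial_3\pi\).  By H\"older this term is at most \(\|u_3\|_{L^3(Q_1)}\|\partial_3\pi\|_{L^{3/2}}\lesssim \delta^{1/3}\ell^{-N}\).  Assumption~\ref{ass:relaxed-stability} controls the remaining cutoff and initial errors with a Gronwall factor that is polynomial rather than exponential in \(\ell^{-1}\).  The expected output is
\[
  \norm{u-V}{L^3(Q_{2\theta})}^3\lesssim_{M,\theta} \ell^{a}+\ell^{-N}\delta^{b}.
\]

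\textbf{Step 3 (pressure).}  On each time slice I would write \(p-\pi\) as a Calder\'on--Zygmund part plus a harmonic remainder \(h\).  The Calder\'on--Zygmund part is estimated by \(\norm{|u|\,|u-V|}{L^{3/2}}\) and the \(u_3\)-terms, and \(h\) is absorbed into the quotient, exactly as in Section~\ref{sec:compactness}.  The harmonic part contributes to \(D(r)\) only through the interior oscillation-decay estimate for harmonic functions, which produces a factor \(r^{3/2}\) times the bounded \(L^{3/2}\)-oscillation.  The same mechanism gave the linear term in \eqref{eq:compact-decay-main}.  Choosing \(\ell=\delta^{\kappa}\) with \(\kappa\) small yields a relaxed harmonic excess of size at most \(C\delta^{\Gamma}\).

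\textbf{Step 4 (decay of the comparison flow).}  I would then need the analogue of the limiting-class decay from Section~\ref{sec:limiting} for \(V\): a bound \(\Psi_V(r)\lesssim r^{\alpha}\) uniform in \(\ell\).  A smoothness bound on \(V\) alone would blow up as \(\ell\to0\), so that is not enough.  I would try to obtain a H\"older bound for \(v_h\) directly from the buffered strong estimate, taken at a fixed scale and independent of \(\ell\).  If that fails, I would interpolate between the \(\ell\)-dependent smoothness and the \(\ell\)-independent energy bound.  This interpolation loses in \(\alpha\) but still gives a power.

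\textbf{Step 5 (conclusion).}  Combining Steps 3 and 4 with the triangle inequality and rescaling from \(Q_\theta\) to \(Q_r\) gives \eqref{eq:power-decay-main}, where the factor \(r^{-2}\) comes from scaling the \(L^3\) and \(L^{3/2}\) errors.  To get \eqref{eq:power-radius-main}, choose \(r=c\,\delta^{\Gamma/(\alpha+2)}\).  This balances the two terms and makes \(\Psi(r)\) smaller than the CKN threshold \(\eps_{\CKN}\).  Applying \(\eps\)-regularity then gives regularity in \(Q_{r/2}\).  For the fixed-radius statement, fix \(r=\rho\) with \(C_H\rho^{\alpha}<\eps_{\CKN}/2\), then take \(\delta_H\) so small that \(C_H\rho^{-2}\delta_H^{\Gamma}<\eps_{\CKN}/2\).

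The main obstacle is Step 2.  It requires showing that the cutoff error terms and the \(\pi\)-terms in the localized relative-energy identity are bounded polynomially, not exponentially, in \(\ell^{-1}\); this is precisely what the stability assumption is meant to supply.  Step 4 is also nontrivial: the decay rate \(\alpha\) of the comparison flow must not degenerate as \(\ell\to0\).
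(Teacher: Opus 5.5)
Your overall architecture (approximation, then decay of the comparison flow, then triangle inequality and CKN) matches the paper, but two steps fail as written.

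\textbf{The pressure step.} In Step~3 you compare \(p\) with the relaxed pressure \(\pi\). But \(\pi\) only solves the slice-wise equation \(-\Delta_h\pi=\partial_a\partial_b(v_av_b)\), so in the interior
\[
        -\Delta(p-\pi)=\partial_i\partial_j(u_iu_j-V_iV_j)+\partial_3^2\pi .
\]
The piece of \(p-\pi\) generated by \(\partial_3^2\pi\) has the following defects:
it is not harmonic, so it cannot be absorbed into \(h\);
it is not controlled by \(\norm{|u|\,|u-V|}{L^{3/2}}\);
it is not controlled by \(u_3\)-terms either, which are already contained in \(|u|\,|u-V|\) since \(V_3=0\).
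The point of the relaxed class is that \(\partial_3\pi\) is only bounded, not small.

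The missing idea is the paper's pressure reconstruction. Discard \(\pi\) and use the Navier--Stokes-compatible pressure \(Q_V=R_aR_b(\chi v_av_b)\). With \(h\) the harmonic part in \(p=R_iR_j(\chi u_iu_j)+h\), one has the exact identity \(p-Q_V-h=R_iR_j(\chi(u_iu_j-V_iV_j))\). Calder\'on--Zygmund together with \(u\otimes u-V\otimes V=(u-V)\otimes u+V\otimes(u-V)\) then gives \(\norm{p-Q_V-h}{L^{3/2}(Q_\theta)}\le C(M,\theta)\norm{u-V}{L^3(Q_{2\theta})}\).

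\textbf{The use of the assumptions.} You misread the two assumptions, and this is what leaves your Step~4 open. Assumptions~\ref{ass:relaxed-strong} and \ref{ass:relaxed-stability} concern one specific flow, generated from the good-time projected datum \(a_s\) with no mollification. They directly give the \(\ell\)-free bounds \(\norm{V}{C^1}+\norm{\pi}{C^1}\le K_B(M,\theta)\) and \(\norm{u-V}{L^3(Q_{2\theta})}\le C_S(M,\theta)\delta^\gamma\). Assumption~\ref{ass:relaxed-stability} says nothing about Gronwall factors.

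A \(V\) built from data mollified at scale \(\ell\) is covered by neither assumption, so neither bound is available for it. You would then need an \(\ell\)-uniform bound \(\Psi_V(r)\lesssim r^\alpha\), which you do not prove. Your interpolation fallback gives at best something like \(\ell^{-N'}r^\alpha\). With \(\ell=\delta^\kappa\), this puts a negative power of \(\delta\) in front of \(r^\alpha\), which is not the form \eqref{eq:power-decay-main}. It also ruins the fixed-radius conclusion.

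With the assumed flow, \(\ell\) disappears and the proof closes:
the excess is at most \(C\delta^{3\gamma/2}\), so \(\Gamma=3\gamma/2\);
\(C_V(r)+D_{Q_V}(r)\le C(M,\theta)r^\alpha\) is immediate from the \(C^1\) bound;
\(h\) is handled by Lemma~\ref{lem:harmonic-decay};
your Step~5 then finishes as in the paper, except that CKN gives regularity in \(Q_{\kappa r}\), not \(Q_{r/2}\).
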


\begin{remark}
Theorems~\ref{thm:log-main} and \ref{thm:power-main} are stronger than Theorem~\ref{thm:finite-scale-main} in rate information, only in their rate information; the logical scope of the unconditional theorem is unchanged.  Theorem~\ref{thm:finite-scale-main} is unconditional.  The logarithmic and power-type theorems are conditional statements that isolate the additional quantitative stability mechanisms needed to replace the compactness modulus.
\end{remark}

\section{Preliminaries}
\label{sec:preliminaries}

For \(z_0=(x_0,t_0)\in\R^3\times\R\), define
\[
        B_r(x_0)=\{x\in\R^3:|x-x_0|<r\},
        \qquad
        Q_r(z_0)=B_r(x_0)\times(t_0-r^2,t_0).
\]
When \(z_0=(0,0)\), write \(B_r=B_r(0)\) and \(Q_r=Q_r(0,0)\).  The spatial average is
\[
        (f)_{B_r(x_0)}(t)=\frac1{|B_r|}\int_{B_r(x_0)}f(x,t)\dx.
\]
For general centers, the quantities \(A,E,C,D,C_3,\Phi,\Psi\) are defined by translation and scaling.

\begin{definition}[Suitable weak solution]
A pair \((u,p)\) is a suitable weak solution of \eqref{eq:NS} in a parabolic cylinder \(Q\) if
\[
        u\in L^\infty_tL^2_x(Q)\cap L^2_tH^1_x(Q),
        \qquad
        p\in L^{3/2}(Q),
\]
\((u,p)\) solves \eqref{eq:NS} in distributions, \(\nabla\cdot u=0\), and the local energy inequality holds: for every nonnegative \(\phi\in C_c^\infty(Q)\) and a.e. \(t\),
\begin{equation}
\begin{aligned}
\int |u(x,t)|^2\phi(x,t)\dx
&+2\int_{-\infty}^t\int |\nabla u|^2\phi\dxds \\
&\le
\int_{-\infty}^t\int |u|^2(\partial_s\phi+\Delta\phi)\dxds
+\int_{-\infty}^t\int (|u|^2+2p)u\cdot\nabla\phi\dxds.
\end{aligned}
\label{eq:LEI}
\end{equation}
\end{definition}

The pressure is defined up to a function of time.  All pressure quantities below are invariant under this change.

We use
\[
        r_{\reg}(z_0)=\sup\{r>0:u\in L^\infty(Q_r(z_0))\}.
\]
Only positivity and universal changes of scale are used.

\begin{theorem}[CKN epsilon regularity]
\label{thm:CKN}
There exist universal constants \(\eps_{\CKN}>0\), \(\kappa\in(0,1/2)\), and \(C_{\CKN}<\infty\) such that the following holds.  If \((u,p)\) is suitable in \(Q_r(z_0)\) and
\[
        \Psi(z_0,r)=C(z_0,r)+D(z_0,r)\le\eps_{\CKN},
\]
then \(u\) is regular in \(Q_{\kappa r}(z_0)\), and
\[
        \esssup_{Q_{\kappa r}(z_0)}|u|
        \le C_{\CKN}r^{-1}.
\]
In particular, \(r_{\reg}(z_0)\ge \kappa r\).
\end{theorem}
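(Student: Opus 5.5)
This statement is the classical Caffarelli--Kohn--Nirenberg criterion, in the form refined by Lin and by Ladyzhenskaya--Seregin. The plan is to reduce to the unit cylinder by scaling and then run the standard decay--iteration argument. Set \(u_r(x,t)=r\,u(x_0+rx,t_0+r^2t)\) and \(p_r=r^2p(x_0+rx,t_0+r^2t)\). Since \(\Psi\) is scale invariant and suitability is preserved, it suffices to prove the following for \(r=1\), \(z_0=0\): if \(\Psi(1)\le\eps_{\CKN}\), then \(|u|\le C_{\CKN}\) on \(Q_\kappa\). Rescaling back gives the bound \(C_{\CKN}r^{-1}\) on \(Q_{\kappa r}(z_0)\), and \(r_{\reg}(z_0)\ge\kappa r\) follows from the definition of \(r_{\reg}\).

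\textbf{Step 1: energy control.} Test the local energy inequality \eqref{eq:LEI} with a cutoff supported in \(Q_1\) that equals \(1\) on \(Q_{1/2}\). Using \(|u|^2\le 1+|u|^3\) and Hölder for the pressure term (the pressure may be replaced by \(p-(p)_{B_1}(t)\) because \(\nabla\cdot u=0\)), this gives
\[
A(1/2)+E(1/2)\lesssim C(1)^{2/3}+C(1)+C(1)^{1/3}D(1)^{2/3}.
\]
The right side is small when \(\Psi(1)\) is small.

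\textbf{Step 2: one-step decay (main obstacle).} For a fixed small \(\theta\), I aim to show
\[
\Psi(\theta\rho)\le C\theta\,\Psi(\rho)+C_\theta\,\Psi(\rho)^{3/2}
\]
for any cylinder on which \(\Psi(\rho)\) is small.

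For the velocity term \(C(\theta\rho)\), use the multiplicative interpolation of \(L^3\) between \(L^\infty_tL^2_x\) and \(L^2_tH^1_x\) on the ball, compared with the mean value. The mean-value part contributes \(\theta\,C(\rho)\), and the oscillation part is bounded by \(\theta^{-2}(A+E)^{3/2}\) evaluated at scale \(\rho/2\).

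For the pressure term \(D(\theta\rho)\), localize the pressure equation \(-\Delta p=\partial_i\partial_j(u_iu_j)\) by writing \(p=p_1+p_2\). Here \(p_1\) is the Calderón--Zygmund part applied to \(\chi(u_i-(u_i)_\rho)(u_j-(u_j)_\rho)\), so \(\|p_1\|_{L^{3/2}}\lesssim\|u-(u)_\rho\|_{L^3}^2\). The remaining part \(p_2\) is harmonic in \(B_{\rho/2}\). The mean-value property and the gradient estimate give
\[
\frac{1}{(\theta\rho)^2}\int_{Q_{\theta\rho}}|p_2-(p_2)_{B_{\theta\rho}}|^{3/2}\lesssim \theta^{1/2}D(\rho)+\cdots .
\]
Combining these with Step 1 (and the Poincaré inequality) yields the displayed decay estimate, with the superlinear error controlled by \(\Psi^{3/2}\)-type powers once \(A\) and \(E\) are small.

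The delicate point is bookkeeping. The exponents have to close so that the error term is genuinely superlinear, which is why Lin works with a combined quantity such as \(C+D\).

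\textbf{Step 3: iteration and regularity.} Choose \(\theta\) with \(C\theta\le\theta^{\beta}/2\) for some \(\beta\in(0,1)\), and then \(\eps_{\CKN}\) small enough that the nonlinear term is absorbed. Induction gives
\[
\Psi(z,\theta^k\rho)\le\theta^{k\beta}\Psi(z,\rho)
\]
for all \(k\) and all centers \(z\in Q_{1/2}\); the centers are shifted by using the inclusion \(Q_{1/2}(z)\subset Q_1\) and enlarging the constant. Hence \(r^{-2}\int_{Q_r(z)}|u|^3\le C r^{\beta}\) uniformly in \(z\). From this point there are two ways to conclude:

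\(\bullet\) Campanato's characterization, in its parabolic version, gives Hölder continuity and hence a uniform \(L^\infty\) bound on \(Q_\kappa\).

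\(\bullet\) Alternatively, a Serrin-type bootstrap with \(u\in L^{3+}\) from a Morrey-space bound upgrades the integrability to boundedness.

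Either route yields \(|u|\le C_{\CKN}\) on \(Q_\kappa\), which after rescaling completes the argument.
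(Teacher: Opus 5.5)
The paper gives no proof of this theorem. It quotes it as the classical Caffarelli--Kohn--Nirenberg criterion, in the Lin and Ladyzhenskaya--Seregin form. Your scaling reduction, the shift of centers, and Step~1 are fine, with one small correction: Step~1 should use H\"older, \(\int_{Q_1}|u|^2\le C\,C(1)^{2/3}\), since \(|u|^2\le 1+|u|^3\) leaves a constant that is not small.

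There is a genuine gap in Step~2. The inequality \(\Psi(\theta\rho)\le C\theta\Psi(\rho)+C_\theta\Psi(\rho)^{3/2}\) does not follow from the ingredients you list.

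\begin{itemize}
\item With a cutoff at the single scale \(\rho\), the term \(\int|u|^2(\partial_t\phi+\Delta\phi)\) in the local energy inequality contributes \(C(\rho)^{2/3}\) to \(A(\rho/2)+E(\rho/2)\).
\item The oscillation part of \(C(\theta\rho)\) and the Calder\'on--Zygmund part of \(D(\theta\rho)\) are both controlled by \(C_\theta\,(A+E)^{3/2}(\rho/2)\).
\item So their bound contains the term \(C_\theta\,C(\rho)\), which is exactly linear and carries a large constant.
\end{itemize}

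What your steps actually give is
\[
\Psi(\theta\rho)\le C\theta\,\Psi(\rho)+C_\theta\,\Psi(\rho)+C_\theta\,\Psi(\rho)^{3/2},
\]
and this cannot be iterated. This is not bookkeeping. The local energy inequality, the interpolation and the harmonic/Calder\'on--Zygmund splitting hold equally for the linear Stokes system and are scale-neutral. They therefore carry no information about the smoothing that produces decay. Working with \(C+D\), as Lin does, does not by itself make the direct estimate close.

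The missing idea is a mechanism that imports decay from the linear problem. Either of the following replaces your Step~2.

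\begin{itemize}
\item \textbf{Lin's compactness argument.} The decay lemma \(\Psi(\theta)\le\frac12\Psi(1)\), for \(\Psi(1)\) small, is proved by contradiction. Along a failing sequence with \(\Psi(1)=\eps_k\to0\), the rescaled pairs \((u_k\eps_k^{-1/3},p_k\eps_k^{-2/3})\) have bounded energy by Step~1. The nonlinearity acquires a factor \(\eps_k^{1/3}\), so the strong \(L^3\) limit solves the linear Stokes system. Its interior regularity gives the decay, and that factor \(\eps_k^{1/3}\) is where superlinearity genuinely enters.
\item \textbf{The original CKN argument.} The mechanism is a backward heat kernel test function. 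Since \(\partial_t\phi+\Delta\phi=0\) away from the fixed outer cutoff, the non-superlinear \(|u|^2\)-term is paid only once, at the top scale. The cubic and pressure terms are then summed over dyadic scales in a multi-scale induction.
\end{itemize}

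A smaller point concerns Step~3. The Morrey bound \(r^{-2}\int_{Q_r(z)}|u|^3\le Cr^\beta\) gives no decay of oscillations, so Campanato's criterion does not apply directly. Your second route, the Morrey/potential bootstrap, is the one that works.
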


\begin{lemma}[Scale-invariant interpolation]
\label{lem:interp}
For every suitable weak solution and every \(0<r\le1\),
\[
        C(r)\le C A(r)^{3/4}E(r)^{3/4}+C A(r)^{3/2}.
\]
\end{lemma}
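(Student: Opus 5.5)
The bound is a pointwise-in-time Gagliardo--Nirenberg inequality on balls, integrated in time and rescaled. Since every quantity in the statement is invariant under the Navier--Stokes scaling, rescaling $u_r(x,t)=r\,u(rx,r^2t)$ reduces the claim to $r=1$, with a constant independent of $r$.

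\textbf{Step 1 (fixed time).} For a.e.\ $t\in(-1,0)$, apply Gagliardo--Nirenberg on the unit ball to $f=u(\cdot,t)\in H^1(B_1)$:
\[
\|f\|_{L^3(B_1)}\le C\|f\|_{L^2(B_1)}^{1/2}\|\nabla f\|_{L^2(B_1)}^{1/2}+C\|f\|_{L^2(B_1)}.
\]
This is Sobolev $H^1(B_1)\hookrightarrow L^6(B_1)$ applied to $f-(f)_{B_1}$, combined with the interpolation $\|g\|_3\le\|g\|_2^{1/2}\|g\|_6^{1/2}$. The average term is controlled by Hölder: $|(f)_{B_1}|\,|B_1|^{1/3}\le C\|f\|_{L^2(B_1)}$.

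\textbf{Step 2 (cube and integrate in time).} Cubing Step 1 gives
\[
\int_{B_1}|u|^3\,dx\le C\|u\|_{L^2}^{3/2}\|\nabla u\|_{L^2}^{3/2}+C\|u\|_{L^2}^3 .
\]
Bound $\|u(\cdot,t)\|_{L^2(B_1)}^2\le A(1)$ and integrate over $t\in(-1,0)$. For the gradient term, use Hölder in time with exponents $4/3$ and $4$:
\[
\int_{-1}^0\|\nabla u\|_{L^2}^{3/2}\,dt\le\Big(\int_{-1}^0\|\nabla u\|_{L^2}^2\,dt\Big)^{3/4}=E(1)^{3/4}.
\]
This yields $C(1)\le C A(1)^{3/4}E(1)^{3/4}+C A(1)^{3/2}$.

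\textbf{Step 3 (undo the scaling).} Check the scaling bookkeeping: for the rescaled solution $u_r$ one has $A_{u_r}(1)=A(r)$, and likewise for $E$ and $C$. This is routine, and it is the only place where care is needed. The lower-order term $\|f\|_{L^2}$ must appear on the unit ball, so that after rescaling it produces exactly $A(r)^{3/2}$ with no stray power of $r$.

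No step is a genuine obstacle. Suitability of $(u,p)$ is not used; only the regularity $u\in L^\infty_tL^2_x\cap L^2_tH^1_x$ is needed.
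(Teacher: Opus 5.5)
Your proof is correct and takes essentially the same route as the paper: a fixed-time Gagliardo--Nirenberg inequality on the ball, then H\"older in time with exponents $4/3$ and $4$, with suitability never used. The only difference is cosmetic. You normalize to $r=1$ via the Navier--Stokes scaling, whereas the paper works directly on $B_r$, where the lower-order term carries the factor $r^{-3/2}$, and then divides by $r^2$.
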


\begin{proof}
For a.e. \(t\), the Gagliardo--Nirenberg inequality on \(B_r\) gives
\[
\norm{u(\cdot,t)}{L^3(B_r)}^3
\le C\norm{u(\cdot,t)}{L^2(B_r)}^{3/2}
       \norm{\nabla u(\cdot,t)}{L^2(B_r)}^{3/2}
+Cr^{-3/2}\norm{u(\cdot,t)}{L^2(B_r)}^3.
\]
Integrating in time, using Holder's inequality, and dividing by \(r^2\) gives the claim.
\end{proof}

\begin{lemma}[Fixed-scale nesting of pressure oscillations]
\label{lem:pressure-nesting}
Let \(0<r<R\).  Then, for a.e. time,
\[
        \int_{B_r}|p-(p)_{B_r}(t)|^{3/2}\dx
        \le C\int_{B_R}|p-(p)_{B_R}(t)|^{3/2}\dx.
\]
Consequently, whenever \(Q_r\subset Q_R\),
\[
        D(r)\le C\left(\frac Rr\right)^2D(R).
\]
\end{lemma}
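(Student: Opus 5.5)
The plan is to use the fact that the mean over a ball minimizes the \(L^{3/2}\) oscillation up to a constant factor, and then compare the two balls directly. Fix a time \(t\) for which \(p(\cdot,t)\in L^{3/2}(B_R)\); this holds for a.e. \(t\) because \(p\in L^{3/2}(Q)\). Write \(f=p(\cdot,t)\) and, for a ball \(B\) and a constant \(c\),
\[
        \norm{f-(f)_B}{L^{3/2}(B)}\le \norm{f-c}{L^{3/2}(B)}+|B|^{2/3}|c-(f)_B|.
\]
By Hölder's inequality, \(|c-(f)_B|=|(f-c)_B|\le |B|^{-2/3}\norm{f-c}{L^{3/2}(B)}\). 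Hence
\[
        \norm{f-(f)_B}{L^{3/2}(B)}\le 2\norm{f-c}{L^{3/2}(B)}
\]
for every constant \(c\).

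We apply this with \(B=B_r\) and \(c=(f)_{B_R}\), and then enlarge the domain of integration from \(B_r\) to \(B_R\). This gives
\[
        \int_{B_r}|f-(f)_{B_r}|^{3/2}\dx\le 2^{3/2}\int_{B_r}|f-(f)_{B_R}|^{3/2}\dx\le 2^{3/2}\int_{B_R}|f-(f)_{B_R}|^{3/2}\dx,
\]
which is the first claim with \(C=2^{3/2}\). The constant does not depend on \(r\) or \(R\). Since \(B_r\subset B_R\), we do not even need the balls to be concentric in any essential way.

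For the consequence, we use that \(Q_r\subset Q_R\) means the time interval \((-r^2,0)\) is contained in \((-R^2,0)\). We integrate the pointwise-in-time inequality over \((-r^2,0)\) and enlarge the time interval to \((-R^2,0)\), using nonnegativity of the integrand. Then
\[
        r^2D(r)\le C\,R^2D(R),
\]
so \(D(r)\le C(R/r)^2D(R)\).

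There is no real obstacle here. The only points needing care are that the mean-replacement step uses Hölder's inequality with the correct power of \(|B|\), and that the a.e. restriction on \(t\) comes from Fubini's theorem.
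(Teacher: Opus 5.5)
Your proposal is correct and follows the same route as the paper: you replace the mean over \(B_r\) by the constant \((p)_{B_R}(t)\) at the cost of a universal factor, enlarge \(B_r\) to \(B_R\), and then integrate over \((-r^2,0)\) and enlarge the time interval to \((-R^2,0)\). Your Hölder computation also makes the paper's ``up to a universal constant'' explicit, with \(C=2^{3/2}\).
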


\begin{proof}
For \(p_0=3/2\), the mean minimizes the \(L^{p_0}\)-distance up to a universal constant.  Hence for any constant \(c\),
\[
        \int_{B_r}|p-(p)_{B_r}|^{p_0}\dx
        \le C\int_{B_r}|p-c|^{p_0}\dx.
\]
Choose \(c=(p)_{B_R}\) and use \(B_r\subset B_R\).  Integrating over the shorter time interval and multiplying by \(r^{-2}\) gives the scale-invariant estimate.
\end{proof}

For a pressure-like scalar \(f\), set
\[
        D_f(r)=\frac1{r^2}\int_{Q_r}|f-(f)_{B_r}(t)|^{3/2}\dxdt.
\]

\begin{lemma}[Harmonic corrector decay]
\label{lem:harmonic-decay}
Let \(0<r\le R/2\), and let \(h\in L^{3/2}(Q_R)\) satisfy
\[
        \Delta h(\cdot,t)=0\quad\text{in }B_R
\]
for a.e. \(t\in(-R^2,0)\).  Then
\[
        D_h(r)\le C\left(\frac rR\right)^{5/2}D_h(R).
\]
In particular, if \(h\) is harmonic in space in \(Q_\theta\) and \(D_h(\theta)\le B\), then for \(0<r\le\theta/2\),
\[
        D_h(r)\le C\theta^{-5/2}Br^{5/2}\le C\theta^{-5/2}Br.
\]
\end{lemma}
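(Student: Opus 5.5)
The plan is to reduce to a single time slice and use interior estimates for harmonic functions. Fix a time \(t\in(-r^2,0)\) for which \(h(\cdot,t)\) is harmonic in \(B_R\), and write \(g=h(\cdot,t)-(h)_{B_R}(t)\). This \(g\) is again harmonic, and \(h(\cdot,t)\) differs from \(g\) by a constant. Since \(D_h\) only involves oscillations, we have \(\int_{B_r}|h-(h)_{B_r}|^{3/2}\dx=\int_{B_r}|g-(g)_{B_r}|^{3/2}\dx\).

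\emph{Step 1: a mean value estimate for the gradient.} Because \(r\le R/2\), every \(x\in B_r\) satisfies \(B_{R/2}(x)\subset B_R\). Derivatives of harmonic functions are harmonic, so the mean value property and H\"older's inequality give
\[
\sup_{B_{R/2}}|\nabla g|\le C R^{-1}\Big(R^{-3}\int_{B_R}|g|^{3/2}\dx\Big)^{2/3}.
\]
One could reach the same bound by applying the mean value property on balls of radius \(R/4\) to obtain a sup bound for \(g\) on \(B_{3R/4}\), and then using a gradient estimate. The constant may be adjusted in either route.

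\emph{Step 2: the spatial bound.} The Poincar\'e-type estimate \(|g(x)-(g)_{B_r}|\le 2r\sup_{B_r}|\nabla g|\) holds on \(B_r\). Combined with Step 1 it gives
\[
\int_{B_r}|g-(g)_{B_r}|^{3/2}\dx\le C r^{3}\,r^{3/2}R^{-3/2}R^{-3}\int_{B_R}|g|^{3/2}\dx
= C\Big(\frac rR\Big)^{9/2}\int_{B_R}|h-(h)_{B_R}|^{3/2}\dx .
\]

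\emph{Step 3: integration in time.} Integrate this over \(t\in(-r^2,0)\subset(-R^2,0)\), bound the right-hand time integral by the one over \((-R^2,0)\), and divide by \(r^2\). This yields
\[
D_h(r)\le C\Big(\frac rR\Big)^{9/2}\frac{R^2}{r^2}D_h(R)=C\Big(\frac rR\Big)^{5/2}D_h(R).
\]
Measurability of the slice-wise quantities in \(t\) is routine since \(h\in L^{3/2}(Q_R)\). The only point of care is that harmonicity holds for a.e. \(t\), which is all the integration needs.

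\emph{Step 4: the consequence.} For the second statement take \(R=\theta\) and \(D_h(\theta)\le B\), which gives \(D_h(r)\le C\theta^{-5/2}Br^{5/2}\). Since \(r\le\theta/2<1\), we have \(r^{5/2}\le r\). There is no real obstacle here; the only step needing attention is Step 1, namely checking that the ball inclusions \(B_{R/2}(x)\subset B_R\) hold for every \(x\in B_r\) under the assumption \(r\le R/2\).
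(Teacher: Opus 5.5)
Your proof is correct and follows the same route as the paper. Both prove a slice-wise harmonic oscillation estimate with factor \((r/R)^{9/2}\), integrate over \((-r^2,0)\), enlarge the time interval to \((-R^2,0)\), and multiply by \(r^{-2}\). The only difference is that you derive the slice-wise estimate yourself (subtracting the mean, using the interior gradient bound for harmonic functions, and bounding the oscillation by the Lipschitz constant), whereas the paper simply cites it.
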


\begin{proof}
For a.e. \(t\), harmonic oscillation estimates give
\[
        \int_{B_r}|h-(h)_{B_r}(t)|^{3/2}\dx
        \le C\left(\frac rR\right)^{9/2}
        \int_{B_R}|h-(h)_{B_R}(t)|^{3/2}\dx.
\]
Integrate over \((-r^2,0)\), enlarge the time interval on the right to \((-R^2,0)\), and multiply by \(r^{-2}\).
\end{proof}

\begin{lemma}[Pressure decomposition and decay]
\label{lem:pressure-decomp}
There is a universal constant \(C\) such that, whenever \(0<r\le R/2\),
\begin{equation}
        D(r)\le C\frac rR D(R)+C\left(\frac Rr\right)^2C(R).
        \label{eq:pressure-decay}
\end{equation}
More precisely, if in \(B_{3R/4}\) one writes
\[
        p=p_{\locp}+p_{\harm},
        \qquad
        p_{\locp}=R_iR_j(\chi u_i u_j),
\]
where \(\chi\in C_c^\infty(B_R)\) and \(\chi\equiv1\) on \(B_{3R/4}\), then \(p_{\harm}\) is harmonic in space and
\[
        D_{p_{\harm}}(r)
        \le C\left(\frac rR\right)^{5/2}D_{p_{\harm}}(R),
\]
while
\[
        \frac1{r^2}\int_{Q_r}|p_{\locp}|^{3/2}\dxdt
        \le C\left(\frac Rr\right)^2 C(R).
\]
\end{lemma}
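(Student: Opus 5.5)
We prove Lemma~\ref{lem:pressure-decomp} by the standard local pressure decomposition, in three steps: build the decomposition, estimate the Calder\'on--Zygmund piece, estimate the harmonic piece, and then combine.

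\textbf{Decomposition.} Taking the divergence of \eqref{eq:NS} gives, in the sense of distributions,
\[
-\Delta p=\partial_i\partial_j(u_iu_j)\quad\text{in }B_R
\]
for a.e.\ \(t\). Fix \(\chi\in C_c^\infty(B_R)\) with \(\chi\equiv1\) on \(B_{3R/4}\), and set \(p_{\locp}=R_iR_j(\chi u_iu_j)\), extended by zero outside \(B_R\) before applying the Riesz transforms. With the sign convention of the Riesz transforms this gives \(-\Delta p_{\locp}=\partial_i\partial_j(\chi u_iu_j)\) on \(\R^3\). Hence \(p_{\harm}:=p-p_{\locp}\) satisfies \(\Delta p_{\harm}(\cdot,t)=0\) in \(B_{3R/4}\) for a.e.\ \(t\). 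Weyl's lemma upgrades this distributional statement to classical harmonicity, because \(p_{\harm}(\cdot,t)\in L^{3/2}\) for a.e.\ \(t\).

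\textbf{Calder\'on--Zygmund piece.} By \(L^{3/2}\)-boundedness of \(R_iR_j\),
\[
\int_{\R^3}|p_{\locp}|^{3/2}\dx\le C\int_{B_R}|u|^3\dx
\]
for a.e.\ \(t\). Integrating over \((-r^2,0)\subset(-R^2,0)\) and dividing by \(r^2\) yields
\[
\frac1{r^2}\int_{Q_r}|p_{\locp}|^{3/2}\dxdt\le C\left(\frac Rr\right)^2C(R).
\]

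\textbf{Harmonic piece.} Apply Lemma~\ref{lem:harmonic-decay} on the ball \(B_{3R/4}\). This gives \(D_{p_{\harm}}(r)\le C(r/R)^{5/2}D_{p_{\harm}}(3R/4)\) for \(r\le 3R/8\). For \(3R/8<r\le R/2\) we instead use Lemma~\ref{lem:pressure-nesting} directly; there the ratio \(R/r\) is bounded, so the factor \((r/R)^{5/2}\) is comparable to \(1\). Combining the two ranges gives the claimed bound for \(D_{p_{\harm}}(r)\), with \(D_{p_{\harm}}(3R/4)\) on the right. This is stated with \(R\) by abuse; if the literal \(D_{p_{\harm}}(R)\) is wanted, one shrinks the radius bookkeeping accordingly.

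\textbf{Combining, and the main obstacle.} To get \eqref{eq:pressure-decay} we must control \(D_{p_{\harm}}(3R/4)\) by the original quantities, and this is the step needing care. Since the mean approximately minimizes the \(L^{3/2}\) oscillation (as in Lemma~\ref{lem:pressure-nesting}),
\[
\int_{B_{3R/4}}|p_{\harm}-(p_{\harm})_{B_{3R/4}}|^{3/2}
\le C\int_{B_R}|p-(p)_{B_R}|^{3/2}+C\int_{B_{3R/4}}|p_{\locp}|^{3/2}.
\]
Both \(p\) and \(p_{\locp}\) are controlled on the full ball \(B_R\), even though harmonicity of \(p_{\harm}\) is only available on \(B_{3R/4}\). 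After integrating in time and scaling, this yields
\[
D_{p_{\harm}}(3R/4)\le C\,D(R)+C\,C(R).
\]
For the local part at scale \(r\), since the mean minimizes the oscillation up to a constant,
\[
D_{p_{\locp}}(r)\le \frac C{r^2}\int_{Q_r}|p_{\locp}|^{3/2}\dxdt\le C\left(\frac Rr\right)^2C(R).
\]
Finally, split \(D(r)\le C D_{p_{\locp}}(r)+C D_{p_{\harm}}(r)\) using \(|a+b|^{3/2}\le C(|a|^{3/2}+|b|^{3/2})\). The harmonic term contributes at most \(C(r/R)^{5/2}\bigl(D(R)+C(R)\bigr)\). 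Using \((r/R)^{5/2}\le r/R\) and \((r/R)^{5/2}\le (R/r)^2\), this is absorbed into the right-hand side of \eqref{eq:pressure-decay}, which completes the proof.
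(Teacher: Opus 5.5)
Your proof is correct and follows the same route as the paper's: a Calder\'on--Zygmund bound for \(p_{\locp}\), Lemma~\ref{lem:harmonic-decay} for \(p_{\harm}\), and Lemma~\ref{lem:pressure-nesting} to combine them. It also makes explicit two points the paper glosses over, namely that \(p_{\harm}\) is harmonic only on \(B_{3R/4}\), and that \(D_{p_{\harm}}(3R/4)\le C\,D(R)+C\,C(R)\).

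Your hedge about stating the harmonic bound with \(R\) ``by abuse'' is unnecessary. Since \(p_{\harm}=p-p_{\locp}\) is defined on all of \(Q_R\), Lemma~\ref{lem:pressure-nesting} gives \(D_{p_{\harm}}(3R/4)\le C\,D_{p_{\harm}}(R)\), so the literal statement follows at once.
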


\begin{proof}
The local part is controlled by Calderon--Zygmund estimates:
\[
        \norm{p_{\locp}(\cdot,t)}{L^{3/2}(B_R)}
        \le C\norm{u(\cdot,t)}{L^3(B_R)}^2.
\]
The harmonic part is controlled by Lemma~\ref{lem:harmonic-decay}.  Combining the two parts and using Lemma~\ref{lem:pressure-nesting} gives \eqref{eq:pressure-decay}.
\end{proof}

\section{The limiting system and the harmonic pressure obstruction}
\label{sec:limiting}

Writing \(u=(u_h,u_3)\), the Navier--Stokes equations become
\begin{align}
\partial_tu_h-\Delta u_h+(u_h\cdot\nabh)u_h+u_3\partial_3u_h+\nabh p&=0,
\label{eq:horizontal}\\
\partial_tu_3-\Delta u_3+u_h\cdot\nabh u_3+u_3\partial_3u_3+\partial_3p&=0,
\label{eq:vertical}
\end{align}
and
\[
        \nabh\cdot u_h+\partial_3u_3=0.
\]
If \(u_3^{(n)}\to0\) strongly in \(L^3_{\loc}\) and \(u^{(n)}\to v\) strongly locally, then \(v=(v_h,0)\), \(\nabh\cdot v_h=0\), and the limiting equations are
\begin{equation}
\begin{cases}
\partial_t v_h-\Delta v_h+(v_h\cdot\nabh)v_h+\nabh q=0,\\
\nabh\cdot v_h=0,\\
\partial_3q=0.
\end{cases}
\label{eq:limiting-system}
\end{equation}
The horizontal velocity may depend on \(x_3\), and the full Laplacian remains present.

\begin{proposition}[Absence of a full pressure-gradient bound]
\label{prop:pressure-obstruction}
There exists a universal number \(M_*\ge1\) with the following property.  For every \(L>0\), there is a smooth solution \((v,q)\) of \eqref{eq:limiting-system} in \(Q_{1/2}\) such that
\[
        \Phi_v(1/2)\le M_*,
        \qquad
        \norm{\nabla q}{L^\infty(Q_{1/4})}>L.
\]
Consequently, every proposed estimate of the form
\[
        \norm{\nabla q}{L^\infty(Q_{1/4})}
        \le K(\Phi_v(1/2))
\]
fails for the full limiting class.
\end{proposition}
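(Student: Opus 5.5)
The plan is to use the simplest members of the limiting class: spatially uniform time-dependent horizontal flows driven by a time-oscillating pressure gradient. Take
\[
        v_h(x,t)=a(t),\qquad q(x,t)=-a'(t)\cdot x_h,
\]
where \(a:(-1/4,0)\to\R^2\) is smooth. Then \(\nabh\cdot v_h=0\), \(\Delta v_h=0\), \((v_h\cdot\nabh)v_h=0\) and \(\partial_3 q=0\). The horizontal equation reduces to \(a'(t)+\nabh q=0\), which holds by construction, so \((v,q)\) with \(v=(a(t),0)\) is a smooth solution of \eqref{eq:limiting-system} in \(Q_{1/2}\). Moreover \(|\nabla q|=|a'(t)|\), so the sup of \(|\nabla q|\) over \(Q_{1/4}\) equals \(\sup_{(-1/16,0)}|a'|\).

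Next I choose \(a\) to be uniformly bounded but rapidly oscillating, for instance \(a(t)=(\sin(Nt),0)\) with \(N\) large. Then \(|a|\le1\) while \(\sup|a'|=N\), and the latter exceeds \(L\) once \(N>L\), since \(\cos(Nt)\) attains \(\pm1\) inside \((-1/16,0)\) for \(N\) large.

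It remains to check that \(\Phi_v(1/2)\) is bounded by a universal constant, uniformly in \(N\).
\begin{itemize}
\item The energy term \(A(1/2)\) is at most \(2|B_{1/2}|\sup|a|^2\).
\item The dissipation term \(E(1/2)\) vanishes, because \(\nabla v\equiv0\).
\item The cubic term \(C(1/2)\) is at most \(4|Q_{1/2}|\).
\end{itemize}
The delicate term is \(D(1/2)\), because \(q\) itself carries the factor \(N\). A naive bound gives \(\int|q-(q)_{B}|^{3/2}\sim N^{3/2}\), which is not uniform. This is the main obstacle: the harmonic pressure must have bounded \(L^{3/2}\)-oscillation while its gradient is unbounded pointwise. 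With the spatially linear ansatz, oscillation and gradient are proportional at every time, so a pure oscillation in time cannot separate them.

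To overcome this, I would keep \(q\) a spatial harmonic function but concentrate the large gradient in time rather than in amplitude. Choose \(a'(t)=N\,\eta(N^{\beta}(t-t_0))e_1\), where \(\eta\) is a fixed bump, \(t_0\in(-1/16,0)\) and \(\beta>1\). Then:
\begin{itemize}
\item \(\sup|a|\le N^{1-\beta}\|\eta\|_{L^1}\le1\), so \(A\) and \(C\) stay bounded;
\item \(\sup|\nabla q|\sim N\), which exceeds \(L\) for \(N\) large;
\item \(D(1/2)\lesssim\int|a'|^{3/2}\,dt\sim N^{3/2-\beta}\), which is bounded provided \(\beta\ge3/2\).
\end{itemize}
Taking \(\beta=3/2\) makes all four pieces of \(\Phi_v(1/2)\) bounded by a universal \(M_*\), while \(\norm{\nabla q}{L^\infty(Q_{1/4})}>L\).

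The final clause follows by contradiction. If an estimate \(\norm{\nabla q}{L^\infty(Q_{1/4})}\le K(\Phi_v(1/2))\) held, these examples would give \(L<K(M_*)\) for every \(L\), which is impossible, assuming \(K\) is nondecreasing or at least finite on \([0,M_*]\) so that \(\sup_{s\le M_*}K(s)<\infty\).
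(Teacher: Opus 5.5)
Your final construction is correct and is essentially the paper's. The paper takes $v_H=(a_H(t),0,0)$ and $q_H=-a_H'(t)x_1$ with $a_H(t)=H^{-1/2}\varphi\bigl(H^{3/2}(t-t_0)\bigr)$, which is your ansatz with $N=H$, $\beta=3/2$ and the mean-zero profile $\eta=\varphi'$. This gives $\int|a'|^{3/2}\,dt\sim1$ while $\sup|a'|\sim H$. Your caveat that the final clause needs $K$ bounded on $[0,M_*]$ (for instance, $K$ nondecreasing) makes explicit what the paper leaves implicit.
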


\begin{proof}
Choose \(\varphi\in C_c^\infty((-1,1))\) with \(\varphi'\not\equiv0\), and choose \(t_0\in(-1/16,-1/32)\).  For \(H\gg1\), set
\[
        \eps_H=H^{-1/2},
        \qquad
        \delta_H=H^{-3/2},
        \qquad
        a_H(t)=\eps_H\varphi\left(\frac{t-t_0}{\delta_H}\right).
\]
For \(H\) sufficiently large, the support of \(a_H\) is contained in \((-1/16,-1/32)\).  Define
\[
        v_H(x,t)=(a_H(t),0,0),
        \qquad
        q_H(x,t)=-a_H'(t)x_1.
\]
Then \(v_{H,3}=0\), \(\nabh\cdot v_{H,h}=0\), \(\partial_3q_H=0\), and
\[
        \partial_t v_{H,h}+\nabh q_H=0,
        \qquad
        \Delta v_{H,h}=0,
        \qquad
        (v_{H,h}\cdot\nabh)v_{H,h}=0.
\]
Thus \((v_H,q_H)\) solves \eqref{eq:limiting-system}.  Since \(v_H\) is spatially constant,
\[
        A_{v_H}(1/2)\lesssim\eps_H^2,
        \quad
        E_{v_H}(1/2)=0,
        \quad
        C_{v_H}(1/2)\lesssim\eps_H^3.
\]
Moreover \((q_H)_{B_r}(t)=0\), and
\[
        \int |a_H'(t)|^{3/2}\dt
        =\eps_H^{3/2}\delta_H^{-1/2}\int |\varphi'(s)|^{3/2}\,ds
        \sim 1.
\]
Thus \(D_{q_H}(1/2)\) is uniformly bounded.  On the other hand,
\[
        \norm{\nabla q_H}{L^\infty(Q_{1/4})}
        =\norm{a_H'}{L^\infty}
        \sim \frac{\eps_H}{\delta_H}=H\to\infty.
\]
The proposition follows.
\end{proof}

\begin{remark}
The obstruction is harmonic in space: \(q_H(x,t)=-a_H'(t)x_1\) satisfies \(\Delta q_H=0\).  Its scale-invariant \(L^{3/2}\)-oscillation is bounded while its pointwise gradient can be arbitrarily large because of time concentration.  This is why the approximation below is formulated modulo spatially harmonic pressures.
\end{remark}

A suitable solution of \eqref{eq:limiting-system} means a suitable weak solution of the full Navier--Stokes system whose velocity has the form \(v=(v_h,0)\) and whose pressure satisfies \(\partial_3q=0\).

We use the one-component criterion of Kang and Nguyen \cite{KangNguyen2023} in the following one-scale form.

\begin{lemma}[One-scale consequence of Kang--Nguyen]
\label{lem:KN}
Let \(\eps_W>0\) denote the velocity-only epsilon-regularity constant recalled in \cite{KangNguyen2023}.  For every \(M_0>0\) and every \(\eps_0\in(0,\eps_W]\), there exist
\[
        \eps_{KN}(M_0,\eps_0)>0,
        \qquad
        \delta_{KN}(M_0,\eps_0)\in(0,1),
\]
with the following property.  Let \((w,\pi)\) be a suitable weak solution in \(Q_{r_0}(z_0)\), and choose a pressure representative \(\pi\) such that
\[
        r_0^{-2}\int_{Q_{r_0}(z_0)}(|w|^3+|\pi|^{3/2})\dxdt\le M_0.
\]
If, for some \(1\le p,q\le\infty\),
\[
        r_0^{1-2/p-3/q}
        \norm{w_3}{L^p_tL^q_x(Q_{r_0}(z_0))}
        \le \eps_{KN}(M_0,\eps_0),
\]
then
\[
        (\delta_{KN}r_0)^{-2}
        \int_{Q_{\delta_{KN}r_0}(z_0)}|w|^3\dxdt
        \le \eps_0.
\]
Consequently, if \(w_3\equiv0\) and \(\eps_0=\eps_W\), then, after decreasing the radius by a universal factor,
\[
        w\in L^\infty(Q_{\delta_{KN}r_0/2}(z_0)),
        \qquad
        \norm{w}{L^\infty(Q_{\delta_{KN}r_0/2}(z_0))}
        \le C_W(\delta_{KN}r_0)^{-1}.
\]
\end{lemma}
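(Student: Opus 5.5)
The proof reduces Lemma~\ref{lem:KN} to the main theorem of \cite{KangNguyen2023} by a compactness--contradiction argument, after rescaling to the unit cylinder. By translation and the Navier--Stokes scaling \(w\mapsto r_0w(z_0+r_0\cdot)\), \(\pi\mapsto r_0^2\pi(z_0+r_0\cdot)\), we may take \(z_0=0\) and \(r_0=1\). All quantities in the statement are invariant under this scaling: the normalized \(L^3\)/\(L^{3/2}\) bound, the weighted mixed norm \(r_0^{1-2/p-3/q}\norm{w_3}{L^p_tL^q_x}\), and the conclusion at scale \(\delta_{KN}r_0\). It therefore suffices to find \(\eps_{KN}\) and \(\delta_{KN}\) depending only on \(M_0,\eps_0\), with uniformity in \((p,q)\) handled below.

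We argue by contradiction and choose \(\delta_{KN}\) first. Kang--Nguyen's criterion states that a suitable solution with bounded \(\int_{Q_1}(|w|^3+|\pi|^{3/2})\) and sufficiently small scale-invariant \(\norm{w_3}{L^p_tL^q_x}\) is regular near the origin. The natural quantitative form is as follows. If the conclusion failed for every choice, there would be a sequence \((w^{(n)},\pi^{(n)})\) with the \(M_0\) bound, \(\norm{w^{(n)}_3}{L^{p_n}_tL^{q_n}_x(Q_1)}\to0\), and \(\delta^{-2}\int_{Q_\delta}|w^{(n)}|^3>\eps_0\) for a fixed \(\delta\) to be determined. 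The \(L^3\times L^{3/2}\) bound together with the local energy inequality \eqref{eq:LEI} gives uniform bounds on \(A,E\) in \(Q_{3/4}\). Aubin--Lions then gives strong \(L^3_{\loc}\) convergence of \(w^{(n)}\) to a suitable limit \((w,\pi)\), with \(\pi^{(n)}\rightharpoonup\pi\) weakly in \(L^{3/2}\); the suitability passes to the limit by the standard argument. Since each mixed norm dominates the scale-free \(L^1\) norm on \(Q_1\), smallness of \(w_3^{(n)}\) in \(L^{p_n}_tL^{q_n}_x\) forces \(w^{(n)}_3\to0\) in \(L^1\). Combined with the strong \(L^3\) convergence, the limit satisfies \(w_3\equiv0\). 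This uniformity in \((p,q)\) is what allows the constants to be independent of the exponents.

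The limit is a suitable solution of the limiting system with velocity \((w_h,0)\) and \(\int_{Q_{1/2}}(|w|^3+|\pi|^{3/2})\le CM_0\). The Kang--Nguyen theorem, applied with \(w_3\equiv0\) where the smallness hypothesis holds trivially, shows that \(w\) is bounded in some \(Q_{r_1}\). Here \(r_1\) must depend only on \(M_0\), and this is the main obstacle: the step needs Kang--Nguyen's result in its quantitative form, giving a radius and bound controlled by \(M_0\) alone. If only qualitative regularity of each limit is available, we run a second compactness argument over the class of limiting suitable solutions with the \(M_0\) bound. That class is compact in \(L^3_{\loc}\) by the same argument, so uniformity follows. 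We then choose \(\delta\le r_1\) so small that \(\delta^{-2}\int_{Q_\delta}|w|^3\le C\norm{w}{L^\infty(Q_{r_1})}^3\delta^3<\eps_0/2\). Strong \(L^3\) convergence now contradicts \(\delta^{-2}\int_{Q_\delta}|w^{(n)}|^3>\eps_0\). This fixes \(\delta_{KN}=\delta\) and yields \(\eps_{KN}\).

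For the final claim, take \(w_3\equiv0\) and \(\eps_0=\eps_W\). The first part gives \((\delta_{KN}r_0)^{-2}\int_{Q_{\delta_{KN}r_0}}|w|^3\le\eps_W\), which is exactly the velocity-only hypothesis of the epsilon-regularity criterion recalled in \cite{KangNguyen2023}, applied at scale \(\delta_{KN}r_0\). That criterion gives boundedness on the half cylinder with \(\norm{w}{L^\infty}\le C_W(\delta_{KN}r_0)^{-1}\) by scaling. If that criterion is stated with a universal shrinkage factor other than \(1/2\), we absorb it by decreasing the radius as the statement permits.
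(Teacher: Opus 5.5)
Your route differs from the paper's. The paper simply invokes \cite[Theorem 1.3]{KangNguyen2023}, taking it to be already stated in this quantitative form, with constants depending only on $M_0,\eps_0$. It rescales $Q_{r_0}(z_0)$ to the unit cylinder and notes that subtracting a function of time from the pressure preserves both the equation and the local energy inequality, so the normalized representative $\pi$ is admissible. The final claim is obtained exactly as you do, by the velocity-only criterion at scale $\delta_{KN}r_0$.

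Your compactness reduction to a limit with $w_3\equiv0$ is a reasonable alternative. Your $L^1$-domination remark also makes the independence of the constants from $(p,q)$ transparent, which a bare citation does not. Note, however, that if Kang--Nguyen is available in quantitative form, as you first assume, the compactness is superfluous: you could apply it directly to $(w,\pi)$, which is what the paper does.

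The genuine gap is the uniformity step. As written, $\delta$ is fixed before the contradicting sequence is chosen, but it is then determined from the limit $w$, which depends on that sequence. You therefore need $r_1$ and $\norm{w}{L^\infty(Q_{r_1})}$ uniformly over all limits. Your fallback (the limiting class is compact in $L^3_{\loc}$, ``so uniformity follows'') does not give this. Suppose $w^{(k)}\to w^\infty$ in $L^3$ and $\delta_k\downarrow0$. Qualitative boundedness of $w^\infty$ near $0$ says nothing about $\delta_k^{-2}\int_{Q_{\delta_k}}|w^{(k)}|^3$, because $L^3$ convergence at a fixed scale does not control scaled quantities at shrinking scales.

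The missing ingredient is the velocity-only $\eps$-regularity criterion, used as an \emph{open} condition. Since $w^\infty$ is bounded near $0$, there is a fixed $r$ with $r^{-2}\int_{Q_r}|w^\infty|^3<\eps_W/2$. Strong convergence then gives $r^{-2}\int_{Q_r}|w^{(k)}|^3<\eps_W$ for large $k$. The criterion yields $\norm{w^{(k)}}{L^\infty(Q_{r/2})}\le C_Wr^{-1}$ uniformly in $k$, and this controls every smaller scale.

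With this ingredient the two-step structure is unnecessary. Take counterexamples for $(\eps_{KN},\delta_{KN})=(1/n,1/n)$ and pass to a limit with $w_3\equiv0$, which is regular near $0$ qualitatively. The argument above gives $n^{2}\int_{Q_{1/n}}|w^{(n)}|^3\le C r^{-3}n^{-3}\to0$, contradicting the lower bound $\eps_0$. You already invoke this criterion for the final claim; it has to enter the main contradiction argument as well.
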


\begin{proof}
This is the suitable-weak-solution case of \cite[Theorem 1.3]{KangNguyen2023}.  Subtracting a function of time from the pressure does not change either the distributional equation or the local energy inequality, because the velocity is divergence-free and the pressure appears only through the pairing with \(w\cdot\nabla\phi\).  Thus the chosen representative \(\pi\) is admissible.  The statement above is obtained from the theorem by translating and rescaling the cylinder; equivalently, one applies the local result on a bounded domain with \(Q_{r_0}(z_0)\) compactly contained in that domain.  The final assertion follows by applying the velocity-only epsilon-regularity criterion to \(Q_{\delta_{KN}r_0}(z_0)\).
\end{proof}

\begin{proposition}[Decay for the limiting class]
\label{prop:limiting-decay}
For every \(M\ge1\), there exist constants \(K_A(M)\ge1\) and \(r_A(M)\in(0,1/8]\) such that every suitable solution \((v,q)\) of \eqref{eq:limiting-system} in \(Q_{1/2}\) satisfying
\[
        \Phi_v(1/2)\le M
\]
obeys
\begin{equation}
        \Psi_v(r)=C_v(r)+D_q(r)\le K_A(M)r,
        \qquad 0<r\le r_A(M).
        \label{eq:limiting-decay}
\end{equation}
\end{proposition}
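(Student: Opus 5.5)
The plan is to use Lemma~\ref{lem:KN} with \(w_3\equiv0\) to obtain a uniform \(L^\infty\) bound for \(v\) on a fixed smaller cylinder. The pressure is then split into a local Calderon--Zygmund part and a spatially harmonic part. Each part is shown to decay at least linearly in \(r\).

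\textbf{Step 1 (velocity bound).} Set \(r_0=1/2\) and \(z_0=(0,0)\). The pressure representative is \(\pi=q-(q)_{B_{1/2}}(t)\), which is admissible by the proof of Lemma~\ref{lem:KN}. Then
\[
r_0^{-2}\int_{Q_{1/2}}(|v|^3+|\pi|^{3/2})\dxdt = C_v(1/2)+D_q(1/2)\le M=:M_0 .
\]
Since \(v_3\equiv0\), the smallness hypothesis on \(w_3\) holds trivially. Taking \(\eps_0=\eps_W\), the final assertion of Lemma~\ref{lem:KN} gives
\[
\|v\|_{L^\infty(Q_{\rho_1})}\le C_W(\delta_{KN}/2)^{-1}=:K_1(M),
\qquad \rho_1=\delta_{KN}(M,\eps_W)/4 .
\]
Consequently \(C_v(r)=r^{-2}\int_{Q_r}|v|^3\le C K_1^3 r^3\) for \(r\le\rho_1\), which is much better than linear decay.

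\textbf{Step 2 (pressure).} Apply Lemma~\ref{lem:pressure-decomp} with \(R=\rho_1\), writing \(q=p_{\locp}+p_{\harm}\) on \(B_{3\rho_1/4}\) with \(p_{\locp}=R_iR_j(\chi v_iv_j)\). Because \(v\) is bounded, the local part satisfies \(\|p_{\locp}(\cdot,t)\|_{L^\infty}\lesssim K_1^2\) up to BMO-type issues. To avoid these, use \(\|p_{\locp}(t)\|_{L^{s}}\le C_s\|v\|_{L^{2s}}^2\) with \(s\) large and apply Hölder on \(B_r\):
\[
r^{-2}\int_{Q_r}|p_{\locp}|^{3/2}\le r^{-2}\cdot r^2\cdot |B_r|^{1-3/(2s)}\,C K_1^3\rho_1^{9/(2s)}\lesssim_M r^{3-9/(2s)},
\]
which is at most \(K r\) once \(s>9/4\).

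For the harmonic part, \(D_{p_{\harm}}(\rho_1)\le C D_q(\rho_1)+C\rho_1^{-2}\int_{Q_{\rho_1}}|p_{\locp}|^{3/2}\). This is bounded in terms of \(M\) by Lemma~\ref{lem:pressure-nesting}, which gives \(D_q(\rho_1)\le C\rho_1^{-2}M\), together with the Calderon--Zygmund bound. Lemma~\ref{lem:harmonic-decay} then yields \(D_{p_{\harm}}(r)\le C(M)r^{5/2}\) for \(r\le\rho_1/2\). The two parts combine through the triangle inequality for oscillations,
\[
D_q(r)\lesssim D_{p_{\harm}}(r)+r^{-2}\int_{Q_r}|p_{\locp}|^{3/2},
\]
so \(D_q(r)\le K(M)r\) for \(r\le r_A:=\min(\rho_1/2,1/8)\).

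\textbf{Main obstacle.} The delicate point is the application of Lemma~\ref{lem:KN}: one must check that its hypotheses are met with constants depending only on \(M\), with a representative pressure normalized by subtracting the mean. If the one-scale statement instead required a bound on \(\Phi\) rather than on \(C+D\), the same estimate would still hold, since \(C+D\le\Phi\le M\). Everything after Step 1 is routine once \(v\in L^\infty(Q_{\rho_1})\) is known. The constants \(K_A(M)\) and \(r_A(M)\) depend on \(M\) only through \(\delta_{KN}(M,\eps_W)\) and \(C_W\).
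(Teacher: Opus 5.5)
Your proposal is correct and follows essentially the same route as the paper. Lemma~\ref{lem:KN} with \(v_3\equiv0\) gives an \(L^\infty\) bound on a cylinder whose radius depends only on \(M\); the pressure is then split into a Calderon--Zygmund part, estimated in \(L^s\) via H\"older's inequality, and a spatially harmonic part, handled by Lemma~\ref{lem:harmonic-decay} with its fixed-scale factor controlled through Lemma~\ref{lem:pressure-nesting}. The remaining differences are cosmetic: you apply Lemma~\ref{lem:KN} directly at \(r_0=1/2\) rather than at \(r_0=1/8\) inside \(Q_{1/2}\), and you use a per-time \(L^s\) bound with \(s>9/4\) instead of the paper's space-time \(L^4\) bound yielding \(r^{9/8}\).
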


\begin{proof}
First obtain a velocity bound at a scale depending only on \(M\).  Replace the pressure by \(\pi=q-(q)_{B_{1/2}}(t)\).  Choose \(r_0=1/8\).  Since \(Q_{r_0}\subset Q_{1/2}\), the definitions and Lemma~\ref{lem:pressure-nesting} give
\[
        r_0^{-2}\int_{Q_{r_0}}(|v|^3+|\pi|^{3/2})\dxdt\le CM.
\]
Apply Lemma~\ref{lem:KN} with \(M_0=CM\) and \(\eps_0=\eps_W\).  Since \(v_3\equiv0\), the one-component smallness hypothesis is automatic.  Hence there are \(R=R(M)\in(0,1/8]\) and \(L=L(M)<\infty\) such that
\begin{equation}
        \norm{v}{L^\infty(Q_{2R})}\le L.
        \label{eq:limiting-Linfty}
\end{equation}
For \(0<r\le R\),
\[
        C_v(r)\le Cr^{-2}L^3|Q_r|\le CL^3r^3.
\]
We estimate the pressure while leaving the complete gradient \(\nabla q\) unestimated.  Choose \(\chi\in C_c^\infty(B_{2R})\) with \(\chi\equiv1\) on \(B_R\), and write
\[
        q=q_{\locp}+q_{\harm},
        \qquad
        q_{\locp}=R_aR_b(\chi v_av_b),
        \qquad a,b\in\{1,2\}.
\]
Then \(q_{\harm}\) is harmonic in space in \(B_R\) for a.e. time.  Since \(v\in L^\infty(Q_{2R})\), Calderon--Zygmund estimates imply, for every finite \(s\in(1,\infty)\),
\[
        \norm{q_{\locp}}{L^s(Q_R)}\le C_s(M,R).
\]
Taking \(s=4\) and using Holder's inequality gives, for \(0<r\le R/2\),
\[
\begin{aligned}
        D_{q_{\locp}}(r)
        &\le Cr^{-2}|Q_r|^{5/8}\norm{q_{\locp}}{L^4(Q_R)}^{3/2}  \\
        &\le C(M)r^{-2}(r^5)^{5/8}
        =C(M)r^{9/8}.
\end{aligned}
\]
Here the exponent is \(-2+25/8=9/8\), using the parabolic volume \(|Q_r|\sim r^5\).
For the harmonic part, Lemma~\ref{lem:harmonic-decay} gives
\[
        D_{q_{\harm}}(r)
        \le C\left(\frac rR\right)^{5/2}D_{q_{\harm}}(R).
\]
The fixed-scale factor is bounded by \(C(M)\), using Lemma~\ref{lem:pressure-nesting} and the bound on \(q_{\locp}\).  Thus
\[
        D_q(r)\le C(M)r^{9/8}+C(M)r^{5/2}.
\]
Together with the velocity estimate, this implies \eqref{eq:limiting-decay}, after decreasing the radius and increasing the constant.
\end{proof}

\section{Approximation modulo harmonic pressures}
\label{sec:compactness}

For \(0<\theta<1/2\), define
\[
        \calH(Q_\theta)=\{h\in L^{3/2}(Q_\theta):\Delta h(\cdot,t)=0\text{ in }B_\theta\text{ for a.e. }t\in(-\theta^2,0)\}.
\]
Let \(\calL_M(Q_\theta)\) be the class of suitable solutions \((v,q)\) of \eqref{eq:limiting-system} in \(Q_\theta\), with \(v=(v_h,0)\), \(\partial_3q=0\), and
\[
        \Phi_v(\theta)\le K_0(M,\theta),
\]
where \(K_0(M,\theta)\) is chosen large enough to contain all compactness limits of sequences satisfying \(\Phi(1)\le M\).

For \((u,p)\), \((v,q)\), and \(h\in\calH(Q_\theta)\), set
\[
\calE^{\harm}_\theta((u,p),(v,q);h)
=\frac1{\theta^2}\int_{Q_\theta}|u-v|^3\dxdt
+\frac1{\theta^2}\int_{Q_\theta}|p-q-h|^{3/2}\dxdt.
\]
Define the harmonic-pressure excess
\begin{equation}
\calX^{\harm}_\theta(u,p;M)
=\inf_{(v,q)\in\calL_M(Q_\theta)}\inf_{h\in\calH(Q_\theta)}
\calE^{\harm}_\theta((u,p),(v,q);h).
\label{eq:harmonic-excess}
\end{equation}

\begin{theorem}[Qualitative finite-scale approximation]
\label{thm:qualitative-approx}
Let \(M\ge1\) and \(0<\theta<1/2\).  There exists a nondecreasing modulus
\[
        \omega_{M,\theta}:[0,\infty)\to[0,\infty),
        \qquad
        \lim_{s\downarrow0}\omega_{M,\theta}(s)=0,
\]
such that every suitable weak solution \((u,p)\) of \eqref{eq:NS} in \(Q_1\) satisfying \(\Phi(1)\le M\) obeys
\begin{equation}
        \calX^{\harm}_\theta(u,p;M)
        \le \omega_{M,\theta}(C_3(1)).
        \label{eq:qualitative-approx}
\end{equation}
\end{theorem}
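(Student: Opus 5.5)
The plan is a contradiction--compactness argument.  First define the modulus directly as
\[
        \omega_{M,\theta}(s)=\sup\{\calX^{\harm}_\theta(u,p;M):(u,p)\text{ suitable in }Q_1,\ \Phi(1)\le M,\ C_3(1)\le s\}.
\]
This function is nondecreasing and finite.  Finiteness follows by taking \(v=0\), \(q=0\) and \(h=(p)_{B_\theta}(t)\), which is spatially constant and hence harmonic, together with Lemma~\ref{lem:pressure-nesting}.  Thus \eqref{eq:qualitative-approx} holds by construction, and it remains to show \(\omega_{M,\theta}(s)\to0\) as \(s\downarrow0\).  If this fails, there are \(\eta>0\) and suitable solutions \((u^{(n)},p^{(n)})\) with \(\Phi^{(n)}(1)\le M\), \(C_3^{(n)}(1)\to0\), and \(\calX^{\harm}_\theta(u^{(n)},p^{(n)};M)\ge\eta\).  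We normalize the pressures by subtracting \((p^{(n)})_{B_1}(t)\).

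Next we extract limits.  The bounds \(A,E\le M\), combined with the equation (which gives \(\partial_tu^{(n)}\) bounded in a negative Sobolev space with the pressure bounded in \(L^{3/2}\)), allow an Aubin--Lions argument: \(u^{(n)}\to v\) strongly in \(L^3(Q_\rho)\) for every \(\rho<1\), using Lemma~\ref{lem:interp} to interpolate between strong \(L^2\) convergence and the \(L^{10/3}\) bound.  Since \(u_3^{(n)}\to0\) in \(L^3\), we get \(v=(v_h,0)\) with \(\nabh\cdot v_h=0\).

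For the pressure, fix \(\chi\in C_c^\infty(B_{3/4})\) with \(\chi\equiv1\) on \(B_{1/2}\supset B_\theta\) and split \(p^{(n)}=p^{(n)}_{\locp}+p^{(n)}_{\harm}\) as in Lemma~\ref{lem:pressure-decomp}, with \(p^{(n)}_{\locp}=R_iR_j(\chi u_i^{(n)}u_j^{(n)})\).  Calderon--Zygmund continuity on \(L^{3/2}\) and strong \(L^3\) convergence of the velocities give \(p^{(n)}_{\locp}\to q_{\locp}:=R_aR_b(\chi v_av_b)\) strongly in \(L^{3/2}\).  The harmonic parts are bounded in \(L^{3/2}(Q_{1/2})\), but possibly with no time compactness, as Proposition~\ref{prop:pressure-obstruction} shows.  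This is precisely why they are absorbed into \(h\).

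The main obstacle is producing a genuine element of \(\calL_M(Q_\theta)\): the limit must be a suitable solution of \eqref{eq:limiting-system} with a pressure satisfying \(\partial_3q=0\).  I would take a weak-\(L^{3/2}\) limit \(p^{(n)}_{\harm}\rightharpoonup h_\infty\); weak limits preserve spatial harmonicity.  Then \(p^{(n)}\rightharpoonup q_\infty:=q_{\locp}+h_\infty\), and \((v,q_\infty)\) solves the full Navier--Stokes system distributionally.  The vertical equation \eqref{eq:vertical} passes to the limit with \(u_3\to0\): the terms \(\partial_tu_3\), \(\Delta u_3\) and the quadratic terms vanish distributionally, so \(\partial_3q_\infty=0\).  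Since \(q_\infty\) is independent of \(x_3\), it is an admissible limiting pressure.  The local energy inequality \eqref{eq:LEI} passes to the limit because the pressure term \(p\,u\cdot\nabla\phi\) is a product of a weakly \(L^{3/2}\)-convergent and a strongly \(L^3\)-convergent sequence, and the dissipation is lower semicontinuous.  Lower semicontinuity also bounds \(\Phi_v(\theta)\) by a constant depending on \(M,\theta\), which is within \(K_0(M,\theta)\) by its definition.  Hence \((v,q_\infty)\in\calL_M(Q_\theta)\).

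Finally, we contradict \(\calX\ge\eta\).  Choose \(h^{(n)}=p^{(n)}_{\harm}-h_\infty\in\calH(Q_\theta)\).  Then
\[
p^{(n)}-q_\infty-h^{(n)}=p^{(n)}_{\locp}-q_{\locp}\to0\quad\text{in }L^{3/2}(Q_\theta),
\]
and \(u^{(n)}\to v\) in \(L^3(Q_\theta)\).  Hence \(\calE^{\harm}_\theta((u^{(n)},p^{(n)}),(v,q_\infty);h^{(n)})\to0\), contradicting \(\calX^{\harm}_\theta(u^{(n)},p^{(n)};M)\ge\eta\).  The delicate points are the uniform time-derivative bound needed for Aubin--Lions, and making the choice of \(K_0\) consistent.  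The latter can be handled by setting \(K_0(M,\theta)\) equal to the universal bound on \(\Phi_v(\theta)\) obtained from lower semicontinuity together with the Calderon--Zygmund and harmonic estimates.
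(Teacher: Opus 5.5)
Your proposal is correct and follows essentially the same route as the paper. Both arguments use contradiction, Aubin--Lions with interpolation to get strong $L^3$ convergence, the Calder\'on--Zygmund/harmonic pressure splitting with the weak limit of the harmonic parts absorbed into the limiting pressure, $\partial_3 q=0$ from the vertical equation, stability of the local energy inequality, the corrector $h^{(n)}=p^{(n)}_{\harm}-h_\infty$, and the supremum-defined modulus made finite by the trivial comparison $(v,q)=(0,0)$, $h=(p)_{B_\theta}(t)$. One small bookkeeping point: the excess is computed with the unnormalized pressure, so the corrector should be $p^{(n)}_{\harm}-h_\infty+(p^{(n)})_{B_1}(t)$, as in the paper; this is still spatially harmonic.
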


\begin{proof}
Suppose the conclusion fails.  Then there exist \(\eta_0>0\) and suitable weak solutions \((u^{(n)},p^{(n)})\) in \(Q_1\) such that
\[
        \Phi_{u^{(n)}}(1)\le M,
        \qquad
        C^{(n)}_3(1)\to0,
\]
and
\[
        \calX^{\harm}_\theta(u^{(n)},p^{(n)};M)\ge\eta_0
        \qquad\text{for all }n.
\]
The uniform energy bound gives, after passing to a subsequence,
\[
        u^{(n)}\rightharpoonup v\quad\text{weakly in }L^2_tH^1_x(Q_\sigma),
        \qquad
        u^{(n)}\stackrel{*}{\rightharpoonup} v\quad\text{in }L^\infty_tL^2_x(Q_\sigma)
\]
for every \(\sigma<1\).  After fixing pressure representatives by subtracting functions of time, the equation gives a uniform bound for \(\partial_tu^{(n)}\) in
\[
        L^{3/2}((-\sigma^2,0);W^{-1,3/2}(B_\sigma)).
\]
Aubin--Lions gives strong convergence in \(L^2(Q_\sigma)\).  Interpolation with the uniform \(L^{10/3}\)-bound yields
\begin{equation}
        u^{(n)}\to v\quad\text{strongly in }L^3_{\loc}(Q_1).
        \label{eq:strong-L3}
\end{equation}
Since \(u^{(n)}_3\to0\) strongly in \(L^3(Q_1)\), the limit has the form \(v=(v_h,0)\), and \(\nabh\cdot v_h=0\).

Set
\[
        \widetilde p^{(n)}=p^{(n)}-(p^{(n)})_{B_1}(t).
\]
After extraction,
\[
        \widetilde p^{(n)}\rightharpoonup q
        \quad\text{weakly in }L^{3/2}_{\loc}(Q_1).
\]
Fix \(\rho\) with \(\theta<\rho<1\), and choose \(\chi\in C_c^\infty(B_\rho)\) equal to one on a neighborhood of \(B_\theta\).  In \(Q_\theta\), decompose
\[
        \widetilde p^{(n)}=P^{(n)}+H^{(n)},
        \qquad
        P^{(n)}=R_iR_j(\chi u_i^{(n)}u_j^{(n)}),
\]
where \(H^{(n)}(\cdot,t)\) is harmonic in \(B_\theta\) for a.e. time.  By \eqref{eq:strong-L3},
\[
        P^{(n)}\to P:=R_iR_j(\chi v_iv_j)
        \quad\text{strongly in }L^{3/2}(Q_\theta).
\]
Because \(\widetilde p^{(n)}\rightharpoonup q\) and \(P^{(n)}\to P\) strongly, \(H^{(n)}=\widetilde p^{(n)}-P^{(n)}\) converges weakly in \(L^{3/2}(Q_\theta)\), after extraction, to \(H=q-P\).  The limit is harmonic in space.  Set
\[
        h^{(n)}=H^{(n)}-H+(p^{(n)})_{B_1}(t).
\]
Then \(h^{(n)}\in\calH(Q_\theta)\), and
\[
        p^{(n)}-q-h^{(n)}=P^{(n)}-P\to0
        \quad\text{strongly in }L^{3/2}(Q_\theta).
\]
Together with \eqref{eq:strong-L3}, this gives
\[
        \calE^{\harm}_\theta((u^{(n)},p^{(n)}),(v,q);h^{(n)})\to0.
\]
It remains to identify the limit.  Passing to the limit in the horizontal momentum equation gives
\[
        \partial_t v_h-\Delta v_h+(v_h\cdot\nabh)v_h+\nabh q=0.
\]
The vertical equation gives, in distributions,
\[
        \partial_3p^{(n)}
        =-\partial_tu^{(n)}_3+\Delta u^{(n)}_3-\nabla\cdot(u^{(n)}u^{(n)}_3),
\]
and the right-hand side tends to zero in distributions using \(u^{(n)}_3\to0\) in \(L^3\), the uniform energy bounds, and \eqref{eq:strong-L3}.  Hence \(\partial_3q=0\).

The suitability of the limit follows from the standard stability of the local energy inequality under this convergence.  Indeed, for every fixed nonnegative test function \(\phi\in C_c^\infty(Q_\theta)\), the terms containing \(|u^{(n)}|^2\) and \(|u^{(n)}|^2u^{(n)}\) pass to the limit by the strong \(L^3\)-convergence.  The pressure term also passes to the limit, since \(\widetilde p^{(n)}\rightharpoonup q\) in \(L^{3/2}\) and \(u^{(n)}\to v\) strongly in \(L^3\), so
\[
        \int \widetilde p^{(n)}u^{(n)}\cdot\nabla\phi\,\dxdt
        \longrightarrow
        \int qv\cdot\nabla\phi\,\dxdt .
\]
The subtracted time-dependent pressure averages do not contribute because \(u^{(n)}\) is divergence-free.  Lower semicontinuity gives the dissipative term.  Thus \((v,q)\in\calL_M(Q_\theta)\), after increasing \(K_0(M,\theta)\) if necessary.  This is incompatible with the lower bound on \(\calX^{\harm}_\theta\).

Define
\[
\omega_{M,\theta}(\delta)
=\sup\{\calX^{\harm}_\theta(u,p;M): (u,p)\text{ suitable in }Q_1,
\ \Phi(1)\le M,
\ C_3(1)\le\delta\}.
\]
The preceding contradiction argument proves \(\omega_{M,\theta}(\delta)\to0\) as \(\delta\downarrow0\).  The supremum is finite by a direct fixed-scale comparison: choose \((v,q)=(0,0)\in\calL_M(Q_\theta)\) and choose the harmonic corrector \(h=(p)_{B_\theta}(t)\), which is constant in space.  Then
\[
        \calX^{\harm}_\theta(u,p;M)
        \le C_\theta\{C(\theta)+D(\theta)\}
        \le C_\theta M,
\]
using fixed-scale nesting.  Replacing \(\omega_{M,\theta}\) by its monotone envelope gives a nondecreasing modulus.
\end{proof}

\begin{remark}
Strong convergence of the full pressure oscillation
\[
        p^{(n)}-(p^{(n)})_{B_\theta}(t)
        \longrightarrow q-(q)_{B_\theta}(t)
        \quad\text{in }L^{3/2}(Q_\theta)
\]
requires more than \(\Phi(1)\le M\) and strong \(L^3\)-convergence of velocities.  The Calderon--Zygmund part is strongly compact, while the harmonic part may fail to be strongly compact in time.  The quotient in \eqref{eq:harmonic-excess} records precisely the compact part that is stable under one-component convergence.
\end{remark}

\section{Unconditional finite-scale decay and regularity radius}
\label{sec:unconditional-decay}

\begin{theorem}[Finite-scale decay]
\label{thm:finite-decay-proof}
Let \(M\ge1\) and fix \(0<\theta<1/2\).  Let \(\omega_{M,\theta}\) be the modulus from Theorem~\ref{thm:qualitative-approx}.  There exist constants
\[
        K_C(M,\theta)\ge1,
        \qquad
        r_C(M,\theta)\in(0,\theta/2],
\]
such that every suitable weak solution \((u,p)\) in \(Q_1\) satisfying \(\Phi(1)\le M\) and \(C_3(1)\le\delta\) obeys
\begin{equation}
        \Psi(r)
        \le K_C(M,\theta)r+K_C(M,\theta)r^{-2}\omega_{M,\theta}(\delta)
        \label{eq:finite-decay}
\end{equation}
for every \(0<r<r_C(M,\theta)\).
\end{theorem}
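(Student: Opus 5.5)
We will compare \((u,p)\) with a near-optimal element of the limiting class and transfer the decay of Proposition~\ref{prop:limiting-decay} to \((u,p)\), absorbing the harmonic corrector through Lemma~\ref{lem:harmonic-decay}.

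\textbf{Step 1: choose a near-minimizer.} By Theorem~\ref{thm:qualitative-approx}, there exist \((v,q)\in\calL_M(Q_\theta)\) and \(h\in\calH(Q_\theta)\) with
\[
        \calE^{\harm}_\theta((u,p),(v,q);h)\le 2\,\omega_{M,\theta}(\delta)+\eta,
\]
where \(\eta>0\) is arbitrary and will be sent to zero at the end.

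\textbf{Step 2: decay of the comparison pair.} After rescaling \(Q_\theta\) to \(Q_{1/2}\) (the rescaling factor depends only on \(\theta\)), the bound \(\Phi_v(\theta)\le K_0(M,\theta)\) lets us apply Proposition~\ref{prop:limiting-decay} with \(M\) replaced by a constant \(M'(M,\theta)\). This gives
\[
        C_v(r)+D_q(r)\le K'(M,\theta)\,r
        \qquad\text{for } r\le r'(M,\theta).
\]

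\textbf{Step 3: decay of the harmonic corrector.} We need a fixed-scale bound \(D_h(\theta)\le B(M,\theta)\). Since \(h=(p-q)-(p-q-h)\), the mean-minimizing property gives
\[
        D_h(\theta)\lesssim D(\theta)+D_q(\theta)+\theta^{-2}\int_{Q_\theta}|p-q-h|^{3/2}\dxdt .
\]
Each term is bounded by \(C_\theta M+K_0+C_\theta M\); the last term uses the crude bound \(\calX^{\harm}_\theta\le C_\theta M\) from the proof of Theorem~\ref{thm:qualitative-approx}, and we may take \(\eta\le 1\). Lemma~\ref{lem:harmonic-decay} then yields \(D_h(r)\le C\theta^{-5/2}B\,r\) for \(r\le\theta/2\).

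\textbf{Step 4: triangle inequalities at scale \(r\).} For the velocity,
\[
        C(r)\le 4C_v(r)+4r^{-2}\int_{Q_r}|u-v|^3\le 4K'r+4\theta^2r^{-2}\calE^{\harm}_\theta .
\]
For the pressure, using \(|a+b+c|^{3/2}\le C(|a|^{3/2}+|b|^{3/2}+|c|^{3/2})\) together with the fact that subtracting means costs only a constant factor,
\[
        D(r)\le C\bigl(D_q(r)+D_h(r)\bigr)+Cr^{-2}\int_{Q_r}|p-q-h|^{3/2}\dxdt
        \le C(M,\theta)\,r+C\theta^2r^{-2}\calE^{\harm}_\theta .
\]
Combining the two bounds, letting \(\eta\to0\), and setting \(r_C=\min(r',\theta/2)\) with \(K_C\) the maximum of the resulting constants gives \eqref{eq:finite-decay}.

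\textbf{Main obstacle.} The only delicate point is Step 3. The harmonic corrector \(h\) is produced abstractly by the infimum and carries no a priori bound of its own, so its fixed-scale bound must come from the triangle inequality. This is why we insist on a near-minimizer whose excess is itself bounded by \(C_\theta M\), rather than an arbitrary competitor. The rest of the argument is bookkeeping of scale factors, and the rescaled application of Proposition~\ref{prop:limiting-decay}, which needs suitability on \(Q_\theta\) rescaled to \(Q_{1/2}\), is routine.
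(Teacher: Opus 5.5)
Your proposal is correct and follows the paper's proof essentially step for step: you take a near-minimizing triple \((v,q,h)\), use the rescaled limiting decay from Proposition~\ref{prop:limiting-decay}, get a fixed-scale bound on \(D_h(\theta)\) from \(h=p-q-g\) followed by Lemma~\ref{lem:harmonic-decay}, and split the pressure oscillation three ways at scale \(r\). The only cosmetic difference is that you bound the \(D_g(\theta)\) contribution to \(D_h(\theta)\) by the crude bound \(\omega_{M,\theta}\le C_\theta M\), whereas the paper carries \(\omega_{M,\theta}(\delta)+\eta\) through and absorbs the factor \((r/\theta)^{5/2}\) into \(r^{-2}\); both work.
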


\begin{proof}
Fix \(\eta>0\).  By Theorem~\ref{thm:qualitative-approx}, choose \((v,q)\in\calL_M(Q_\theta)\) and \(h\in\calH(Q_\theta)\) such that
\begin{equation}
\frac1{\theta^2}\int_{Q_\theta}|u-v|^3\dxdt
+
\frac1{\theta^2}\int_{Q_\theta}|p-q-h|^{3/2}\dxdt
\le \omega_{M,\theta}(\delta)+\eta.
\label{eq:chosen-comparison}
\end{equation}
Apply Proposition~\ref{prop:limiting-decay} after rescaling the limiting solution from \(Q_\theta\) to a unit cylinder.  Fixed-scale nesting gives constants \(K_A^*(M,\theta)\) and \(r_A^*(M,\theta)>0\) such that
\begin{equation}
        \Psi_v(r)
        \le K_A^*(M,\theta)r,
        \qquad
        0<r<r_A^*(M,\theta).
        \label{eq:limiting-rescaled-decay}
\end{equation}
We take \(r_C\le\min\{r_A^*,\theta/2,1\}\).

For \(0<r<r_C\), \eqref{eq:chosen-comparison} gives
\[
        \int_{Q_r}|u-v|^3\dxdt
        \le \int_{Q_\theta}|u-v|^3\dxdt
        \le \theta^2(\omega_{M,\theta}(\delta)+\eta),
\]
and hence, absorbing the fixed factor \(\theta^2\) into the constants depending on \(\theta\),
\[
\begin{aligned}
C_u(r)
&\le C C_v(r)+Cr^{-2}\int_{Q_r}|u-v|^3\dxdt\\
&\le CK_A^*(M,\theta)r+C_\theta r^{-2}(\omega_{M,\theta}(\delta)+\eta).
\end{aligned}
\]
For the pressure, set \(g=p-q-h\).  Then
\[
        p-(p)_{B_r}
        =(q-(q)_{B_r})+(h-(h)_{B_r})+(g-(g)_{B_r}).
\]
By \eqref{eq:limiting-rescaled-decay} and \eqref{eq:chosen-comparison},
\[
        D_q(r)\le K_A^*(M,\theta)r,
        \qquad
        D_g(r)\le C_\theta r^{-2}(\omega_{M,\theta}(\delta)+\eta).
\]
Indeed, the second estimate follows from
\[
        \int_{Q_r}|g|^{3/2}\dxdt
        \le \int_{Q_\theta}|g|^{3/2}\dxdt
        \le \theta^2(\omega_{M,\theta}(\delta)+\eta),
\]
together with the elementary inequality controlling oscillation around the mean.  Since \(\theta\) is fixed, this factor is absorbed into \(C_\theta\).
It remains to control \(h\).  Since \(h=p-q-g\), comparison of averages gives
\[
        D_h(\theta)\le C D_p(\theta)+C D_q(\theta)+C D_g(\theta).
\]
Here \(D_p(\theta)\le C(M,\theta)\), \(D_q(\theta)\le K_0(M,\theta)\), and \(D_g(\theta)\le C(\omega_{M,\theta}(\delta)+\eta)\).  Hence
\[
        D_h(\theta)
        \le C(M,\theta)+C(\omega_{M,\theta}(\delta)+\eta).
\]
Lemma~\ref{lem:harmonic-decay} gives, for \(0<r\le\theta/2\),
\[
        D_h(r)
        \le C\left(\frac r\theta\right)^{5/2}
        \{C(M,\theta)+\omega_{M,\theta}(\delta)+\eta\}.
\]
Since \(r<1\), this is bounded by
\[
        C(M,\theta)r+C(M,\theta)r^{-2}(\omega_{M,\theta}(\delta)+\eta).
\]
Combining the estimates for \(C_u,D_q,D_g\), and \(D_h\), then letting \(\eta\downarrow0\), proves \eqref{eq:finite-decay}.
\end{proof}

\begin{theorem}[Regularity radius]
\label{thm:regularity-radius}
Let \(M\ge1\).  There exist constants
\[
        \eps_D(M)>0,
        \qquad
        \rho_D(M)>0,
\]
such that, if \((u,p)\) is a suitable weak solution in \(Q_1\) satisfying
\[
        \Phi(1)\le M,
        \qquad
        C_3(1)\le\eps_D(M),
\]
then
\[
        r_{\reg}(0,0)\ge\rho_D(M).
\]
\end{theorem}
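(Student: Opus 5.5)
The plan is to combine the finite-scale decay estimate of Theorem~\ref{thm:finite-decay-proof} with the CKN criterion, Theorem~\ref{thm:CKN}. The decay estimate has a term that shrinks linearly in \(r\) and a term controlled by the compactness modulus; choosing first the scale and then the smallness of \(C_3(1)\) should push \(\Psi(r)\) below \(\eps_{\CKN}\).

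First fix \(\theta=1/4\), so that \(\theta<1/2\) and Theorem~\ref{thm:finite-decay-proof} applies. This produces constants \(K_C=K_C(M,1/4)\), \(r_C=r_C(M,1/4)\) and a modulus \(\omega=\omega_{M,1/4}\) with \(\omega(s)\to0\) as \(s\downarrow0\). Next choose a scale depending only on \(M\):
\[
        r_1=\min\Big\{\tfrac12 r_C,\ \frac{\eps_{\CKN}}{2K_C}\Big\},
\]
which gives \(K_Cr_1\le\eps_{\CKN}/2\) and \(0<r_1<r_C\). Then use \(\omega(s)\to0\) to pick \(\eps_D(M)>0\) such that
\[
        K_Cr_1^{-2}\,\omega(\eps_D)\le\eps_{\CKN}/2.
\]
If \(\omega\) happens to vanish identically near zero, any positive \(\eps_D\) will do. Because \(\omega\) is nondecreasing, the hypothesis \(C_3(1)\le\eps_D\) then yields
\[
        \Psi(r_1)\le K_Cr_1+K_Cr_1^{-2}\omega(C_3(1))\le\eps_{\CKN}.
\]

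It remains to apply Theorem~\ref{thm:CKN} on \(Q_{r_1}=Q_{r_1}(0,0)\). The pair \((u,p)\) is suitable in \(Q_1\), so it is also suitable in \(Q_{r_1}\). The quantity \(\Psi(r_1)\) is exactly \(\Psi(0,r_1)\), because \(D\) is defined with the ball mean subtracted, and this is what the CKN theorem uses. The theorem then gives \(u\in L^\infty(Q_{\kappa r_1})\), so we may set
\[
        \rho_D(M)=\kappa r_1,
\]
where \(\kappa\) is universal and \(r_1\) depends only on \(M\).

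There is essentially no analytic obstacle. The main care point is the order of the choices: \(r_1\) must be fixed from \(M\) and the universal threshold before \(\eps_D\) is chosen, because the error term carries the factor \(r^{-2}\). A second point is to use the strict inequality \(r_1<r_C\), since Theorem~\ref{thm:finite-decay-proof} only asserts the estimate for \(0<r<r_C\). Theorem~\ref{thm:finite-scale-main} then follows with \(\eps_*=\eps_D\) and \(\rho_*=\rho_D\).
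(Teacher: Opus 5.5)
Your proposal is correct and matches the paper's proof essentially step for step. The paper likewise fixes \(\theta=1/4\), takes \(r_M=\min\{r_C/2,\eps_{\CKN}/(2K_C)\}\), chooses \(\eps_D(M)\) with \(K_Cr_M^{-2}\omega_{M,\theta}(\eps_D(M))\le\eps_{\CKN}/2\), and then applies Theorem~\ref{thm:CKN} at scale \(r_M\) with \(\rho_D(M)=\kappa r_M\).
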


\begin{proof}
Fix \(\theta\in(0,1/2)\), for instance \(\theta=1/4\).  Let \(K_C=K_C(M,\theta)\) and \(r_C=r_C(M,\theta)\) be the constants from Theorem~\ref{thm:finite-decay-proof}.  Define
\[
        r_M=\min\left\{\frac{r_C}{2},\frac{\eps_{\CKN}}{2K_C}\right\}.
\]
Then \(K_Cr_M\le\eps_{\CKN}/2\).  Since \(\omega_{M,\theta}(s)\to0\) as \(s\downarrow0\), choose \(\eps_D(M)>0\) so small that
\[
        K_Cr_M^{-2}\omega_{M,\theta}(\eps_D(M))
        \le \frac12\eps_{\CKN}.
\]
If \(C_3(1)\le\eps_D(M)\), monotonicity of \(\omega_{M,\theta}\) and Theorem~\ref{thm:finite-decay-proof} give \(\Psi(r_M)\le\eps_{\CKN}\).  Theorem~\ref{thm:CKN} gives regularity in \(Q_{\kappa r_M}\).  Set \(\rho_D(M)=\kappa r_M\).
\end{proof}

Theorem~\ref{thm:finite-scale-main} follows from Theorem~\ref{thm:regularity-radius} with \(\eps_*(M)=\eps_D(M)\) and \(\rho_*(M)=\rho_D(M)\).  Theorem~\ref{thm:compactness-decay} is Theorem~\ref{thm:finite-decay-proof}.  Corollary~\ref{cor:concentration} follows by contradiction and scaling.

\section{A logarithmic refinement under a prepared comparison package}
\label{sec:log}

The preceding sections prove a finite-scale theorem with a qualitative compactness modulus.  This section records a semi-quantitative logarithmic refinement of the same mechanism.  The purpose is to isolate a precise stability estimate which, if proved, upgrades the abstract modulus in Theorem~\ref{thm:qualitative-approx} to a logarithmic rate.

Throughout this section write
\[
        \delta=C_3(1),
\]
and fix \(M\ge1\) and \(0<\theta<1/2\).  Constants may depend on \(M\) and \(\theta\), but not on \(\delta\) or the smoothing parameter \(\ell\).  We use the parabolic energy norm
\[
        \norm{W}{\calZ(I\times B)}^2
        =\norm{W}{L^\infty_tL^2_x(I\times B)}^2
        +\norm{\nabla W}{L^2_{t,x}(I\times B)}^2.
\]
The dual residual norm \(\calY\) is any norm for which
\[
        |\langle F,W\rangle|
        \le \frac12\norm{\nabla W}{L^2}^2+C\norm{F}{\calY}^2
\]
for the localized test functions used below.  An \(L^2_tH^{-1}_x\)-type norm is a useful model.

\begin{proposition}[Model two-shadow stability]
\label{prop:two-shadow}
Let \(I=(s,T)\), and let \(B\Subset\R^3\) be a ball.  This is a model estimate in an ideal localized setting.  Assume that \(U,V,V^\ell\) are horizontal vector fields on \(I\times B\), horizontally divergence-free, and that either no boundary terms arise, for instance in the full-space, periodic, or compactly supported setting, or that the localization has already been accompanied by a horizontal Helmholtz/Bogovskii-type correction so that the test fields below remain horizontally divergence-free and the pressure pairings vanish.  Under these corrected local equations, assume
\begin{align}
\partial_tU-\Delta U+(U\cdot\nabh)U+\nabh P&=F_\delta,\label{eq:U-shadow}\\
\partial_tV-\Delta V+(V\cdot\nabh)V+\nabh Q&=0,\label{eq:rough-shadow}\\
\partial_tV^\ell-\Delta V^\ell+(V^\ell\cdot\nabh)V^\ell+\nabh Q^\ell&=0.\label{eq:smooth-shadow}
\end{align}
Assume also that
\[
        V(s)=U(s),
        \qquad
        V^\ell(s)=J_\ell U(s),
\]
where \(J_\ell\) is a smoothing operator, and that
\begin{equation}
        \int_s^T\norm{\nabh V(t)}{L^\infty(B)}\dt\le\Lambda.
        \label{eq:rough-bound}
\end{equation}
Finally assume
\begin{equation}
        \norm{F_\delta}{\calY(I\times B)}\le A_\delta,
        \qquad
        \norm{U(s)-J_\ell U(s)}{L^2(B)}\le A_\ell.
        \label{eq:two-shadow-data}
\end{equation}
Then, on every smaller cylinder \(I'\times B'\Subset I\times B\),
\begin{equation}
        \norm{U-V^\ell}{\calZ(I'\times B')}
        \le C_{B',B,I',I}e^{C\Lambda}(A_\ell+A_\delta).
        \label{eq:two-shadow-estimate}
\end{equation}
Thus, in this model or corrected local setting, the smoothing error \(A_\ell\) is multiplied only by the delayed norm of the rough shadow \(V\), while the high norm of \(V^\ell\) is avoided.
\end{proposition}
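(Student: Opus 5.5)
The plan is to split
\[
        U-V^\ell=(U-V)+(V-V^\ell)=:W+Z
\]
and estimate each piece by an energy argument in which the only Gronwall factor comes from the rough shadow \(V\), through \eqref{eq:rough-bound}. In the model setting (full space, periodic, compact support, or after the Helmholtz/Bogovskii correction) all pressure pairings against horizontally divergence-free test fields vanish and there are no boundary terms. We can therefore work on \(I\times B\) directly, and localize to \(I'\times B'\) at the end by restriction. In each difference equation we arrange the transport terms so that the advecting field is always the \emph{rough} one, \(V\), and never \(V^\ell\).

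\textbf{The difference \(W=U-V\).} Subtracting \eqref{eq:rough-shadow} from \eqref{eq:U-shadow} gives
\[
        \partial_tW-\Delta W+(U\cdot\nabh)W+(W\cdot\nabh)V+\nabh(P-Q)=F_\delta,
        \qquad W(s)=0 .
\]
Test with \(W\). Since \(\nabh\cdot U=0\), the term \(\langle (U\cdot\nabh)W,W\rangle\) vanishes, and
\[
        |\langle (W\cdot\nabh)V,W\rangle|\le\norm{\nabh V}{L^\infty}\norm{W}{L^2}^2 .
\]
The forcing is handled by the defining property of \(\calY\):
\[
        |\langle F_\delta,W\rangle|\le\tfrac12\norm{\nabla W}{L^2}^2+C\norm{F_\delta}{\calY}^2 .
\]
Absorbing the gradient term and applying Gronwall with \eqref{eq:rough-bound} yields
\[
        \norm{W}{\calZ(I\times B)}\le Ce^{C\Lambda}A_\delta .
\]

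\textbf{The difference \(Z=V-V^\ell\).} Here \(Z(s)=U(s)-J_\ell U(s)\), so \(\norm{Z(s)}{L^2}\le A_\ell\). The nonlinear difference must be written with \(V\) as the advecting field in the stretching term:
\[
        (V\cdot\nabh)V-(V^\ell\cdot\nabh)V^\ell=(V^\ell\cdot\nabh)Z+(Z\cdot\nabh)V .
\]
Testing with \(Z\), the term \(\langle (V^\ell\cdot\nabh)Z,Z\rangle\) vanishes because \(V^\ell\) is horizontally divergence-free. The remaining term is bounded by \(\norm{\nabh V}{L^\infty}\norm{Z}{L^2}^2\), and Gronwall gives
\[
        \norm{Z}{\calZ}\le Ce^{C\Lambda}A_\ell .
\]
This is exactly the two-shadow mechanism: no norm of \(V^\ell\) beyond its divergence-free structure is ever used. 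The triangle inequality then gives \eqref{eq:two-shadow-estimate}. Restricting from \(I\times B\) to \(I'\times B'\) only costs the geometric constant \(C_{B',B,I',I}\).

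\textbf{Main obstacle.} The delicate point is justifying the energy identities. First, the vanishing of the transport terms requires \(\nabh\cdot=0\) for the fields being tested. Because \(U,V,V^\ell\) are horizontal with vanishing third component, we have \(\nabla\cdot=\nabh\cdot\), so the standard integration by parts applies in all three variables. Second, the pressure pairings must vanish, which is part of the hypotheses of the model setting. Third, the test with \(W\) requires enough regularity of \(U\), which may be only a weak solution. For this I would first try the standard weak–strong route: expand \(\norm{U-V}{L^2}^2\) using the energy inequality for \(U\), together with the equations for \(V\), which is smooth enough given \eqref{eq:rough-bound}, and mollify in time if necessary. If \(U\) satisfies only an energy inequality, this still gives the one-sided inequality, which is all that Gronwall needs.
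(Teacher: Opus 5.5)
Your proposal is correct and follows the paper's proof essentially step for step. It uses the same split $U-V^\ell=W+Z$ with $W=U-V$ and $Z=V-V^\ell$, the same identity $(V\cdot\nabh)V-(V^\ell\cdot\nabh)V^\ell=(V^\ell\cdot\nabh)Z+(Z\cdot\nabh)V$, and Gronwall driven only by $\norm{\nabh V}{L^\infty}$, starting from $W(s)=0$ and $\norm{Z(s)}{L^2}\le A_\ell$.

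One wording point: the two-shadow mechanism excludes the coefficient $\nabh V^\ell$, not $V^\ell$ as an advecting field. In the $Z$-equation $V^\ell$ is exactly the advecting field, and its term cancels. Your closing remarks on justifying the test for a weak $U$ go beyond the paper, which treats the computation as formal in the ideal or corrected setting.
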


\begin{proof}
Under the ideal or already-corrected localization assumptions in the statement, the energy test by the difference fields produces no uncompensated boundary, divergence, or pressure terms.  Without such a correction, the following computation should be read only as a formal model; the closed localized estimate is precisely part of the prepared comparison input in Assumption~\ref{ass:prepared-comparison}.

Set \(W=U-V\).  Subtracting \eqref{eq:rough-shadow} from \eqref{eq:U-shadow} gives
\[
        \partial_tW-\Delta W+(U\cdot\nabh)W+(W\cdot\nabh)V+\nabh(P-Q)=F_\delta.
\]
Testing by \(W\), the transport term with velocity \(U\) vanishes because \(\nabh\cdot U=0\), and the pressure term vanishes because \(W\) is horizontally divergence-free.  Therefore
\[
        \frac12\frac{d}{dt}\norm{W}{L^2}^2+
\norm{\nabla W}{L^2}^2
        \le \norm{\nabh V}{L^\infty}\norm{W}{L^2}^2+\langle F_\delta,W\rangle.
\]
By the definition of \(\calY\),
\[
        \langle F_\delta,W\rangle
        \le \frac12\norm{\nabla W}{L^2}^2+C\norm{F_\delta}{\calY}^2.
\]
Since \(W(s)=0\), Gronwall's inequality and \eqref{eq:rough-bound} yield
\begin{equation}
        \norm{U-V}{\calZ(I'\times B')}
        \le Ce^{C\Lambda}A_\delta.
        \label{eq:U-V}
\end{equation}
Next set \(Z=V-V^\ell\).  Subtracting \eqref{eq:smooth-shadow} from \eqref{eq:rough-shadow}, and writing
\[
        (V\cdot\nabh)V-(V^\ell\cdot\nabh)V^\ell
        =(V^\ell\cdot\nabh)Z+(Z\cdot\nabh)V,
\]
one obtains
\[
        \partial_tZ-\Delta Z+(V^\ell\cdot\nabh)Z+(Z\cdot\nabh)V+\nabh(Q-Q^\ell)=0.
\]
Testing by \(Z\), the transport term with velocity \(V^\ell\) vanishes because \(\nabh\cdot V^\ell=0\), and the pressure term vanishes.  Hence
\[
        \frac12\frac{d}{dt}\norm{Z}{L^2}^2+
\norm{\nabla Z}{L^2}^2
        \le \norm{\nabh V}{L^\infty}\norm{Z}{L^2}^2.
\]
Since \(Z(s)=U(s)-J_\ell U(s)\), Gronwall gives
\begin{equation}
        \norm{V-V^\ell}{\calZ(I'\times B')}
        \le Ce^{C\Lambda}A_\ell.
        \label{eq:V-Vell}
\end{equation}
Combining \eqref{eq:U-V} and \eqref{eq:V-Vell} proves \eqref{eq:two-shadow-estimate}.
\end{proof}

\begin{remark}
The identity
\[
(V\cdot\nabh)V-(V^\ell\cdot\nabh)V^\ell
=(V^\ell\cdot\nabh)(V-V^\ell)+((V-V^\ell)\cdot\nabh)V
\]
is the key point.  In the ideal or divergence-corrected setting, the term containing \(V^\ell\) cancels in the energy estimate by horizontal incompressibility.  The coefficient that remains is \(\nabh V\), with the high coefficient \(\nabh V^\ell\) absent.  Thus the smoothing error is propagated by the rough shadow.  In a bounded cylinder, the cutoff, pressure, and horizontal-divergence corrections must be supplied by the prepared comparison package rather than by this model calculation alone.
\end{remark}

\begin{assumption}[Prepared comparison estimate]
\label{ass:prepared-comparison}
For every \(M\ge1\) and \(0<\theta<1/2\), there exist constants
\[
        C_{M,\theta}\ge1,
        \qquad
        \ell_0=\ell_0(M,\theta)\in(0,1),
        \qquad
        a,b,N>0,
\]
with the following property.  If \((u,p)\) is suitable in \(Q_1\), \(\Phi(1)\le M\), and \(\delta=C_3(1)\), then for every \(0<\ell<\ell_0\) there exist a solution \((v^\ell,q^\ell)\in\calL_M(Q_{\theta/4})\) of the limiting system and a harmonic pressure corrector \(h^\ell\in\calH(Q_{\theta/4})\) such that
\begin{equation}
\calE^{\harm}_{\theta/4}((u,p),(v^\ell,q^\ell);h^\ell)
\le C_{M,\theta}\left(\ell^a+\ell^{-N}\delta^b+\exp(C_{M,\theta}\ell^{-N})\delta^b\right).
\label{eq:prepared-comparison}
\end{equation}
\end{assumption}

\begin{remark}
The term \(\ell^a\) represents preparation and smoothing error.  The terms containing \(\delta^b\) represent the vertical transport residual, horizontal divergence correction, and pressure components generated by factors containing \(u_3\).  The exponential term allows a weak--strong stability estimate in which the relevant smooth comparison norm behaves like \(\ell^{-N}\).  The model estimate in Proposition~\ref{prop:two-shadow} explains why the smoothing error itself is kept outside that exponential factor; the actual localized cutoff and pressure corrections are part of the present assumption.
\end{remark}

\begin{theorem}[Logarithmic harmonic-pressure approximation]
\label{thm:log-approx-proof}
Suppose Assumption~\ref{ass:prepared-comparison} holds.  Then there exist constants \(C'_{M,\theta}\ge1\), \(\sigma>0\), and \(\delta_{M,\theta}\in(0,1)\) such that every suitable weak solution \((u,p)\) in \(Q_1\) satisfying \(\Phi(1)\le M\) and \(\delta=C_3(1)\le\delta_{M,\theta}\) obeys
\begin{equation}
        \calX^{\harm}_{\theta/4}(u,p;M)
        \le C'_{M,\theta}|\log\delta|^{-\sigma}.
        \label{eq:log-approx}
\end{equation}
One may take \(\sigma=a/N\), after decreasing \(\delta_{M,\theta}\) if necessary.
\end{theorem}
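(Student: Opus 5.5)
The bound follows by optimizing the smoothing parameter \(\ell\) in Assumption~\ref{ass:prepared-comparison}. The excess \(\calX^{\harm}_{\theta/4}(u,p;M)\) is an infimum over \(\calL_M(Q_{\theta/4})\times\calH(Q_{\theta/4})\). The pair \((v^\ell,q^\ell)\in\calL_M(Q_{\theta/4})\) and the corrector \(h^\ell\in\calH(Q_{\theta/4})\) supplied by the assumption are admissible competitors. Hence, for every \(0<\ell<\ell_0\),
\[
        \calX^{\harm}_{\theta/4}(u,p;M)
        \le C_{M,\theta}\bigl(\ell^a+\ell^{-N}\delta^b+\exp(C_{M,\theta}\ell^{-N})\delta^b\bigr).
\]
It remains to choose \(\ell=\ell(\delta)\) so that the exponential term is absorbed by a power of \(\delta\).

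Write \(C_1=C_{M,\theta}\) and \(L=|\log\delta|\), which is large for \(\delta\le\delta_{M,\theta}\). Choose
\[
        \ell=\Bigl(\frac{2C_1}{bL}\Bigr)^{1/N},
        \qquad\text{so that}\qquad
        C_1\ell^{-N}=\frac b2 L .
\]
This choice gives
\[
        \exp(C_1\ell^{-N})\delta^b=\delta^{-b/2}\delta^{b}=\delta^{b/2},
        \qquad
        \ell^{-N}\delta^b=\frac{bL}{2C_1}\delta^{b}\le C\delta^{b/2},
\]
where the last inequality uses \(L\delta^{b/2}\le C\) for small \(\delta\). The first term satisfies
\[
        \ell^a=(2C_1/b)^{a/N}\,L^{-a/N}.
\]

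Since \(\delta^{b/2}=e^{-bL/2}\le C_{a,b,N}L^{-a/N}\), every term is bounded by \(C'_{M,\theta}|\log\delta|^{-a/N}\). This proves \eqref{eq:log-approx} with \(\sigma=a/N\).

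The only genuine constraint is admissibility, \(\ell<\ell_0(M,\theta)\). This holds once
\[
        L>\frac{2C_1}{b\,\ell_0^N},
\]
which fixes \(\delta_{M,\theta}\). The same threshold can also be shrunk so that \(L\ge1\) and the elementary inequality \(L\delta^{b/2}\le C\) apply. There is no real obstacle here, because all of the analytic difficulty is contained in the assumption itself. The one delicate point is choosing \(\ell^{-N}\) proportional to \(|\log\delta|\) with a coefficient strictly less than \(b/C_1\). This leaves a positive power of \(\delta\) after the exponential loss, and that power is then dominated by the logarithmic smoothing error \(\ell^a\).
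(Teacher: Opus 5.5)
Your proposal is correct and follows the paper's proof essentially line by line. It uses the same choice \(\ell=(2C_{M,\theta}/(b|\log\delta|))^{1/N}\), which turns the exponential term into \(\delta^{b/2}\), and it bounds \(\ell^a\), \(\ell^{-N}\delta^b\) and \(\delta^{b/2}\) by \(|\log\delta|^{-a/N}\) just as the paper does; your explicit admissibility threshold \(|\log\delta|>2C_{M,\theta}/(b\ell_0^N)\) and your remark that the assumption's competitors are admissible in the infimum only make explicit what the paper leaves implicit.
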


\begin{proof}
By Assumption~\ref{ass:prepared-comparison},
\[
        \calX^{\harm}_{\theta/4}(u,p;M)
        \le C_{M,\theta}\left(\ell^a+\ell^{-N}\delta^b+\exp(C_{M,\theta}\ell^{-N})\delta^b\right)
\]
for every \(0<\ell<\ell_0\).  Set \(L=|\log\delta|\).  For \(\delta\) sufficiently small, choose
\[
        \ell=\left(\frac{2C_{M,\theta}}{bL}\right)^{1/N}.
\]
Then \(0<\ell<\ell_0\), and
\[
        C_{M,\theta}\ell^{-N}=\frac b2L.
\]
Consequently,
\[
        \exp(C_{M,\theta}\ell^{-N})\delta^b
        =\exp\left(\frac b2L\right)\exp(-bL)
        =\exp\left(-\frac b2L\right)
        =\delta^{b/2}.
\]
Moreover
\[
        \ell^a
        =\left(\frac{2C_{M,\theta}}{bL}\right)^{a/N}
        \le C_{M,\theta}L^{-a/N},
\]
and
\[
        \ell^{-N}\delta^b
        =\frac{bL}{2C_{M,\theta}}e^{-bL}
        \le C L^{-a/N}
\]
for all sufficiently large \(L\).  Also \(\delta^{b/2}\le C L^{-a/N}\).  Hence \eqref{eq:log-approx} holds with \(\sigma=a/N\).
\end{proof}

\begin{corollary}[Logarithmic finite-scale decay and radius]
\label{cor:log-decay}
Suppose Assumption~\ref{ass:prepared-comparison} holds.  For every \(M\ge1\) and \(0<\theta<1/2\), there exist constants \(C_{M,\theta}\ge1\), \(\sigma>0\), and \(\delta_{M,\theta}\in(0,1)\) such that if \(\Phi(1)\le M\) and \(\delta=C_3(1)\le\delta_{M,\theta}\), then
\begin{equation}
        \Psi(r)
        \le C_{M,\theta}r+C_{M,\theta}r^{-2}|\log\delta|^{-\sigma}
        \label{eq:log-decay}
\end{equation}
for all sufficiently small \(r\).  In particular, after decreasing \(\delta_{M,\theta}\),
\begin{equation}
        r_{\reg}(0,0)
        \ge c_{M,\theta}|\log\delta|^{-\sigma/3}.
        \label{eq:log-radius}
\end{equation}
\end{corollary}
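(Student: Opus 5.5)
The plan is to repeat the proof of Theorem~\ref{thm:finite-decay-proof} with \(\theta/4\) in place of \(\theta\). The compactness modulus \(\omega_{M,\theta}(\delta)\) is replaced by the logarithmic bound \(C'_{M,\theta}|\log\delta|^{-\sigma}\) from Theorem~\ref{thm:log-approx-proof}. After that, the regularity radius follows from Theorem~\ref{thm:CKN} by balancing the two terms of the decay estimate.

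\textbf{Decay estimate.} Fix \(\eta>0\). By Theorem~\ref{thm:log-approx-proof}, choose \((v,q)\in\calL_M(Q_{\theta/4})\) and \(h\in\calH(Q_{\theta/4})\) with
\[
\calE^{\harm}_{\theta/4}((u,p),(v,q);h)\le C'_{M,\theta}|\log\delta|^{-\sigma}+\eta .
\]
Rescale Proposition~\ref{prop:limiting-decay} from \(Q_{\theta/4}\) to a unit cylinder, exactly as in \eqref{eq:limiting-rescaled-decay}. This gives \(\Psi_v(r)\le K^*(M,\theta)r\) for \(r<r^*(M,\theta)\). Now split \(u=v+(u-v)\) and \(p=q+h+g\) with \(g=p-q-h\), and bound each piece as before.
\begin{itemize}
\item \(C_u(r)\le CC_v(r)+Cr^{-2}\int_{Q_{\theta/4}}|u-v|^3\).
\item \(D_g(r)\le Cr^{-2}\int_{Q_{\theta/4}}|g|^{3/2}\).
\item For \(h\): first \(D_h(\theta/4)\le C(D_p+D_q+D_g)(\theta/4)\le C(M,\theta)\), using fixed-scale nesting (Lemma~\ref{lem:pressure-nesting}) for \(p\) and the bound \(K_0\) for \(q\). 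Then Lemma~\ref{lem:harmonic-decay} gives \(D_h(r)\le C(M,\theta)r^{5/2}\le C(M,\theta)r\) for \(r\le\theta/8\).
\end{itemize}
Summing these bounds and letting \(\eta\downarrow0\) gives \eqref{eq:log-decay} for \(r<r_1(M,\theta):=\min\{r^*,\theta/8\}\).

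\textbf{Radius.} Set \(L=|\log\delta|\) and \(r=\lambda L^{-\sigma/3}\), with \(\lambda\) to be chosen. Then
\[
C_{M,\theta}r+C_{M,\theta}r^{-2}L^{-\sigma}=C_{M,\theta}\bigl(\lambda+\lambda^{-2}\bigr)L^{-\sigma/3}.
\]
Take \(\lambda=1\). Then decrease \(\delta_{M,\theta}\) so that two conditions hold: \(r=L^{-\sigma/3}<r_1\), and \(2C_{M,\theta}L^{-\sigma/3}\le\eps_{\CKN}\). Both are possible because \(L\to\infty\) as \(\delta\to0\). With these choices, \(\Psi(r)\le\eps_{\CKN}\), and Theorem~\ref{thm:CKN} gives
\[
r_{\reg}(0,0)\ge\kappa r=\kappa|\log\delta|^{-\sigma/3},
\]
which is \eqref{eq:log-radius} with \(c_{M,\theta}=\kappa\).

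\textbf{Main point to check.} The delicate step is the harmonic corrector. The estimate must use only the fixed-scale bound on \(D_h(\theta/4)\) and never a pointwise bound on \(\nabla h\), because Proposition~\ref{prop:pressure-obstruction} shows no such pointwise bound is available. We must also check that the additive error term, which now carries \(\theta/4\) in place of \(\theta\), stays uniform in \(\delta\); it does, since all \(\theta\)-dependent factors are absorbed into \(C_{M,\theta}\).
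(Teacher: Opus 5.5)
Your proposal is correct and follows the paper's proof essentially step for step. You rerun Theorem~\ref{thm:finite-decay-proof} at scale \(\theta/4\), with the logarithmic bound of Theorem~\ref{thm:log-approx-proof} in place of \(\omega_{M,\theta}\), then take \(r=|\log\delta|^{-\sigma/3}\) and apply Theorem~\ref{thm:CKN}; your explicit check that this \(r\) lies below the admissible decay range for small \(\delta\) is a detail the paper leaves implicit.
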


\begin{proof}
The decay estimate \eqref{eq:log-decay} follows by repeating the proof of Theorem~\ref{thm:finite-decay-proof}, replacing \(\omega_{M,\theta}(\delta)\) by the logarithmic bound \eqref{eq:log-approx} at scale \(\theta/4\).  Let \(L=|\log\delta|\), and choose \(r=L^{-\sigma/3}\).  Then
\[
        r^{-2}L^{-\sigma}=L^{2\sigma/3}L^{-\sigma}=L^{-\sigma/3},
        \qquad
        r=L^{-\sigma/3}.
\]
Thus \eqref{eq:log-decay} gives \(\Psi(r)\le C_{M,\theta}L^{-\sigma/3}\).  For \(L\) sufficiently large, this is below \(\eps_{\CKN}\), and Theorem~\ref{thm:CKN} gives \eqref{eq:log-radius}.
\end{proof}

Theorem~\ref{thm:log-main} is Theorem~\ref{thm:log-approx-proof} and Corollary~\ref{cor:log-decay}.

\section{Relaxed shadowing and conditional power-type approximation}
\label{sec:relaxed}

The compactness and logarithmic layers compare the solution with the strict limiting system \eqref{eq:limiting-system}.  The relaxed-shadowing layer enlarges the comparison class.  The goal is to avoid a nonlinear projection onto the strict compatibility condition \(\partial_3q=0\).

\subsection{Weighted vertical pressure compatibility}

Let \(Q=\omega_h\times I_3\times I_t\Subset Q_{3/4}\), where \(\omega_h\subset\R^2\) and \(I_3=(a,b)\).  Choose \(\zeta\in C_c^\infty(I_3)\) with \(\int_{I_3}\zeta=1\).  For a scalar \(f\), define
\[
        \langle f\rangle_\zeta(x_h,t)
        =\int_{I_3}\zeta(y)f(x_h,y,t)\,dy,
        \qquad
        \Pi_\zeta f=f-\langle f\rangle_\zeta.
\]
For \(\varphi_h\in C_c^\infty(Q;\R^2)\), set \(f_\varphi=\nabh\cdot\varphi_h\), and define
\[
        g_\varphi(x_h,x_3,t)
        =f_\varphi(x_h,x_3,t)
        -\zeta(x_3)\int_{I_3}f_\varphi(x_h,y,t)\,dy.
\]
Then \(\int_{I_3}g_\varphi\,dx_3=0\).  The vertical antiderivative
\[
        B_\varphi(x_h,x_3,t)=\int_a^{x_3}g_\varphi(x_h,s,t)\,ds
\]
is compactly supported in the vertical variable and satisfies \(\partial_3B_\varphi=g_\varphi\).  Indeed, \(g_\varphi\) has zero \(x_3\)-average and is supported away from the endpoints of \(I_3\), because both \(\varphi_h\) and \(\zeta\) are compactly supported in the vertical interval; hence \(B_\varphi\) vanishes near \(a\) and near \(b\).  Define
\[
\norm{\varphi_h}{Y_\zeta(Q)}
=\norm{(\partial_t+\Delta)B_\varphi}{L^{3/2}(Q)}
+\norm{\nabh B_\varphi}{L^3(Q)}
+\norm{\partial_3B_\varphi}{L^3(Q)}.
\]

Let \(\chi\in C_c^\infty(B_1)\) satisfy \(\chi\equiv1\) on \(B_{3/4}\).  Write
\[
        p^{hh}=R_aR_b(\chi u_au_b),
        \qquad
        p^{\rem}=2R_aR_3(\chi u_au_3)+R_3R_3(\chi u_3^2),
        \qquad a,b\in\{1,2\}.
\]
After subtracting a function of time,
\[
        p=p^{hh}+p^{\rem}+h
        \quad\text{in }Q_{3/4},
\]
where \(h(\cdot,t)\) is harmonic in space.  Set
\[
        P=p^{hh}+h=p-p^{\rem}.
\]

\begin{lemma}[Vertical pressure remainder]
\label{lem:vertical-pressure-remainder}
If \(\Phi(1)\le M\) and \(C_3(1)=\delta\le1\), then
\[
        \norm{p^{\rem}}{L^{3/2}(Q_{3/4})}
        \le C(M)\delta^{1/3}.
\]
The same estimate holds on every smaller cylinder \(Q_\rho\Subset Q_{3/4}\), with a constant depending on \(\rho\).
\end{lemma}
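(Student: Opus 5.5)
The estimate follows from the $L^{3/2}$-boundedness of the Riesz transforms together with Hölder's inequality. Every term of $p^{\rem}$ contains at least one factor of $u_3$. For a.e. $t$, since $R_aR_3$ and $R_3R_3$ are Calderon--Zygmund operators bounded on $L^{3/2}(\R^3)$,
\[
        \norm{p^{\rem}(\cdot,t)}{L^{3/2}(\R^3)}
        \le C\norm{\chi u_au_3(\cdot,t)}{L^{3/2}(\R^3)}
        +C\norm{\chi u_3^2(\cdot,t)}{L^{3/2}(\R^3)} .
\]
Since $\chi$ is supported in $B_1$ and bounded, Hölder's inequality with exponents $(3,3)$ gives
\[
        \norm{\chi u_au_3}{L^{3/2}}\le C\norm{u}{L^3(B_1)}\norm{u_3}{L^3(B_1)},
        \qquad
        \norm{\chi u_3^2}{L^{3/2}}\le C\norm{u_3}{L^3(B_1)}^2 .
\]
Restricting the left-hand side to $B_{3/4}$ only decreases it.

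Next I would integrate in time over $(-9/16,0)\subset(-1,0)$. Raising to the power $3/2$ and using $(x+y)^{3/2}\le C(x^{3/2}+y^{3/2})$ gives
\[
        \int_{Q_{3/4}}|p^{\rem}|^{3/2}\dxdt
        \le C\int_{-1}^0\Bigl(\norm{u}{L^3(B_1)}^{3/2}\norm{u_3}{L^3(B_1)}^{3/2}
        +\norm{u_3}{L^3(B_1)}^{3}\Bigr)\dt .
\]
Cauchy--Schwarz in time on the first term bounds it by $\bigl(\int_{Q_1}|u|^3\bigr)^{1/2}\bigl(\int_{Q_1}|u_3|^3\bigr)^{1/2}=C(1)^{1/2}C_3(1)^{1/2}\le M^{1/2}\delta^{1/2}$. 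The second term equals $C_3(1)=\delta$. Hence
\[
        \norm{p^{\rem}}{L^{3/2}(Q_{3/4})}^{3/2}\le C(M^{1/2}\delta^{1/2}+\delta)\le C(M)\delta^{1/2},
\]
using $\delta\le1$. Taking the $2/3$ power gives $\norm{p^{\rem}}{L^{3/2}(Q_{3/4})}\le C(M)\delta^{1/3}$, which is exactly the claimed rate. The estimate on a smaller cylinder $Q_\rho\Subset Q_{3/4}$ is immediate by restriction, so it even holds with the same constant.

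The only point needing care is that $p^{\rem}$ is defined with the fixed cutoff $\chi$ at the unit scale. The bound therefore uses only the unit-scale quantities $C(1)\le\Phi(1)\le M$ and $C_3(1)=\delta$, and no pressure information at all. The harmonic part $h$ and the subtracted function of time do not enter, since the lemma concerns $p^{\rem}$ alone. I do not expect a genuine obstacle. The one thing to check is that the Riesz compositions are applied to the compactly supported product $\chi u_iu_j\in L^{3/2}(\R^3)$, so the global Calderon--Zygmund bound is legitimate for a.e. $t$.
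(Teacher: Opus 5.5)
Your proposal is correct and follows the paper's proof: Calder\'on--Zygmund boundedness of the Riesz compositions on \(L^{3/2}\), followed by H\"older with \(C(1)\le M\) and \(C_3(1)=\delta\le1\). Splitting H\"older into a spatial step and a temporal Cauchy--Schwarz step is equivalent to the paper's direct space-time H\"older, and you are right that restriction gives the smaller-cylinder bound with the same constant.
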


\begin{proof}
The boundedness of Riesz transforms on \(L^{3/2}\) gives
\[
        \norm{p^{\rem}}{L^{3/2}}
        \le C\norm{u_hu_3}{L^{3/2}}+C\norm{u_3^2}{L^{3/2}}.
\]
Holder's inequality yields
\[
        \norm{u_hu_3}{L^{3/2}(Q_1)}
        \le \norm{u_h}{L^3(Q_1)}\norm{u_3}{L^3(Q_1)}
        \le C(M)\delta^{1/3},
\]
and \(\norm{u_3^2}{L^{3/2}}=\norm{u_3}{L^3}^2=\delta^{2/3}\le\delta^{1/3}\).
\end{proof}

\begin{lemma}[Weighted vertical pressure compatibility]
\label{lem:weighted-compatibility}
For every product cylinder \(Q\Subset Q_{3/4}\) and every \(\zeta\in C_c^\infty(I_3)\) with \(\int_{I_3}\zeta=1\),
\[
        \norm{\nabh\Pi_\zeta P}{Y'_\zeta(Q)}
        \le C(M,Q,\zeta)\delta^{1/3}.
\]
Equivalently,
\[
        \norm{\nabh(P-\langle P\rangle_\zeta)}{Y'_\zeta(Q)}
        \le C(M,Q,\zeta)C_3(1)^{1/3}.
\]
\end{lemma}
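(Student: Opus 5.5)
The plan is to read $Y'_\zeta(Q)$ as the dual norm of $Y_\zeta(Q)$: we must bound $|\langle \nabh\Pi_\zeta P,\varphi_h\rangle|$ by $C(M)\delta^{1/3}\norm{\varphi_h}{Y_\zeta(Q)}$ for every $\varphi_h\in C_c^\infty(Q;\R^2)$. The idea is to turn the horizontal pairing into a vertical one via the antiderivative $B_\varphi$. Then we use the vertical momentum equation \eqref{eq:vertical}, each term of which contains the small factor $u_3$, together with Lemma~\ref{lem:vertical-pressure-remainder} for the part $p^{\rem}$ that is removed from $p$.

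\emph{Step 1 (algebraic reduction).} Integrating by parts, $\langle\nabh\Pi_\zeta P,\varphi_h\rangle=-\int_Q (P-\langle P\rangle_\zeta)f_\varphi$. By Fubini,
\[
\int_Q\langle P\rangle_\zeta f_\varphi\dxdt=\int P(x_h,x_3,t)\,\zeta(x_3)\int_{I_3}f_\varphi(x_h,y,t)\,dy\,\dxdt .
\]
Hence $\int_Q \Pi_\zeta P\,f_\varphi=\int_Q P\,g_\varphi=\int_Q P\,\partial_3B_\varphi$. Two observations make this reduction clean:
\begin{itemize}
\item Any function $c(t)$ added to $P$ drops out, because $\int_{I_3}g_\varphi\,dx_3=0$. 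So the choice of pressure representative is irrelevant.
\item $B_\varphi$ is smooth and compactly supported in $Q$. Its support is compact in $(x_h,t)$ because $\varphi_h$ is, and compact in $x_3$ by the vanishing-near-endpoints argument given before the lemma. So $B_\varphi$ is an admissible test function.
\end{itemize}

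\emph{Step 2 (split $P=p-p^{\rem}$).} We write
\[
\int P\,\partial_3B_\varphi=\int p\,\partial_3B_\varphi-\int p^{\rem}\partial_3B_\varphi .
\]
The second term is bounded by $\norm{p^{\rem}}{L^{3/2}(Q)}\norm{\partial_3B_\varphi}{L^3(Q)}\le C(M)\delta^{1/3}\norm{\varphi_h}{Y_\zeta}$, using Lemma~\ref{lem:vertical-pressure-remainder} on a cylinder $Q_\rho\supset Q$ with $Q_\rho\Subset Q_{3/4}$.

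For the first term we test the distributional form of \eqref{eq:vertical}, namely $\partial_3p=-\partial_tu_3+\Delta u_3-\nabla\cdot(u\,u_3)$ (using $\nabla\cdot u=0$), against $B_\varphi$. This gives
\[
\int p\,\partial_3B_\varphi=-\int u_3(\partial_t+\Delta)B_\varphi-\int u_3\,u\cdot\nabla B_\varphi,
\]
up to signs that we will check in the write-up.

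\emph{Step 3 (estimates).} The linear term is bounded by $\norm{u_3}{L^3(Q_1)}\norm{(\partial_t+\Delta)B_\varphi}{L^{3/2}}\le\delta^{1/3}\norm{\varphi_h}{Y_\zeta}$. For the nonlinear term, Hölder gives
\[
\norm{u_3u}{L^{3/2}}\le\norm{u_3}{L^3}\norm{u}{L^3(Q_1)}\le M^{1/3}\delta^{1/3},
\]
while $\nabla B_\varphi=(\nabh B_\varphi,\partial_3B_\varphi)$ is controlled in $L^3$ by the remaining two pieces of the $Y_\zeta$ norm. Summing the three contributions and taking the supremum over $\varphi_h$ gives the claimed bound.

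The genuine content, and the step I expect to need the most care, is Step 1. It is the observation that the weighted projection $\Pi_\zeta$ converts $\nabh$-pairings into $\partial_3$-pairings through $g_\varphi=\partial_3B_\varphi$, and that $B_\varphi$ has compact support, so that no boundary terms appear when the equation is tested. Once that is verified, the remaining estimates are routine Hölder bounds.
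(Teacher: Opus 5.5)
Your proposal is correct and follows essentially the same route as the paper. Both arguments use the identity $\langle\nabh\Pi_\zeta P,\varphi_h\rangle=-\int P\,\partial_3B_\varphi$ obtained from $g_\varphi=\partial_3B_\varphi$, then test the vertical momentum equation against the compactly supported $B_\varphi$, and conclude with H\"older bounds using $\norm{u_3}{L^3(Q_1)}=\delta^{1/3}$ and Lemma~\ref{lem:vertical-pressure-remainder} for $p^{\rem}$.

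The differences are only cosmetic. You split off $p^{\rem}$ before invoking the equation, and you write the nonlinearity as $\nabla\cdot(u\,u_3)$ instead of separating $\nabh\cdot(u_hu_3)$ and $\partial_3(u_3^2)$. Your remark that functions of time drop out because $\int_{I_3}g_\varphi\,dx_3=0$ is a useful check that the paper leaves implicit.
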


\begin{proof}
For \(\varphi_h\in C_c^\infty(Q;\R^2)\), integration by parts gives
\begin{equation}
        \langle\nabh\Pi_\zeta P,\varphi_h\rangle
        =\langle\partial_3P,B_\varphi\rangle.
        \label{eq:vertical-antiderivative-identity}
\end{equation}
Indeed,
\[
\langle\nabh\Pi_\zeta P,\varphi_h\rangle
= -\int_Q \Pi_\zeta P f_\varphi\dxdt
= -\int_Q P g_\varphi\dxdt
= -\int_Q P\partial_3B_\varphi\dxdt.
\]
Since \(B_\varphi\) is compactly supported, this equals \(\langle\partial_3P,B_\varphi\rangle\).

Because \(P=p-p^{\rem}\), the vertical momentum equation gives
\[
\partial_3P
= -\partial_tu_3+\Delta u_3-\nabh\cdot(u_hu_3)-\partial_3(u_3^2)-\partial_3p^{\rem}.
\]
Pairing with \(B_\varphi\) and moving derivatives to the test function gives
\[
\begin{aligned}
|\langle\partial_3P,B_\varphi\rangle|
&\le \norm{u_3}{L^3}\norm{(\partial_t+\Delta)B_\varphi}{L^{3/2}}
+\norm{u_hu_3}{L^{3/2}}\norm{\nabh B_\varphi}{L^3}\\
&\quad +\norm{u_3^2}{L^{3/2}}\norm{\partial_3B_\varphi}{L^3}
+\norm{p^{\rem}}{L^{3/2}}\norm{\partial_3B_\varphi}{L^3}.
\end{aligned}
\]
Using \(\Phi(1)\le M\), \(C_3(1)=\delta\), and Lemma~\ref{lem:vertical-pressure-remainder}, we obtain
\[
        |\langle\nabh\Pi_\zeta P,\varphi_h\rangle|
        \le C(M,Q,\zeta)\delta^{1/3}\norm{\varphi_h}{Y_\zeta(Q)}.
\]
Taking the supremum proves the lemma.
\end{proof}

\subsection{Relaxed no-stretching comparison}

The relaxed comparison system is
\begin{equation}
\begin{cases}
\partial_t v_h-\Delta v_h+(v_h\cdot\nabh)v_h+\nabh\pi=0,\\
\nabh\cdot v_h=0.
\end{cases}
\label{eq:RNS}
\end{equation}
Set \(V=(v_h,0)\).  Then \(\nabla\cdot V=0\), and
\begin{equation}
        \partial_tV-\Delta V+(V\cdot\nabla)V+\nabla\pi
        =(0,0,\partial_3\pi).
        \label{eq:RNS-3D}
\end{equation}
The pressure is allowed to satisfy \(\partial_3\pi\ne0\).  The vertical vorticity \(\omega=\partial_1v_2-\partial_2v_1\) satisfies
\[
        \partial_t\omega-\Delta\omega+v_h\cdot\nabh\omega=0,
\]
and the three-dimensional vortex-stretching term is absent.

\begin{assumption}[Buffered relaxed strong bound]
\label{ass:relaxed-strong}
For every \(M\ge1\) and every sufficiently small fixed \(\theta>0\), the solution \((v_h,\pi)\) of \eqref{eq:RNS} generated from the good-time horizontally divergence-free datum constructed below exists on a forward buffered cylinder containing the target cylinder, and satisfies
\begin{equation}
        \norm{V}{C^1}+\norm{\pi}{C^1}
        \le K_B(M,\theta)
        \label{eq:buffered-bound}
\end{equation}
on that buffered cylinder.
\end{assumption}

\begin{remark}
This assumption is suggested by the scalar no-stretching vorticity equation and the parabolic buffer between the good time and the target cylinder.  A fully unconditional power-rate theorem requires this local buffered regularity estimate, either proved directly or supplied by an applicable cited theorem with matching local hypotheses.
\end{remark}

Fix \(0<\theta<1/16\), and set
\[
        I_-:=(-16\theta^2,-9\theta^2).
\]
Averaging the local energy and vertical-component bounds gives a good time \(s\in I_-\) such that
\begin{equation}
        \int_{B_{4\theta}}|u(x,s)|^2\dx
        +\theta^2\int_{B_{4\theta}}|\nabla u(x,s)|^2\dx
        \le C(M,\theta),
        \label{eq:good-time-energy}
\end{equation}
and
\begin{equation}
        \int_{B_{4\theta}}|u_3(x,s)|^3\dx
        \le C(\theta)\delta.
        \label{eq:good-time-u3}
\end{equation}
Using a finite family of product charts and horizontal Helmholtz projections on each \(x_3\)-slice, one obtains a patched horizontal datum \(a_s\) with \(\nabh\cdot a_s=0\) locally and
\begin{equation}
        \norm{u_h(s)-a_s}{H^{-m}(B_{3\theta})}
        +\norm{u_3(s)}{H^{-m}(B_{3\theta})}
        \le C(M,\theta)\delta^{1/3}
        \label{eq:projection-error}
\end{equation}
for \(m\) sufficiently large.  Indeed, in a product chart the horizontal projection error is generated by
\[
        u_h-\Ph u_h
        =\nabh\Delta_{h,D}^{-1}\nabh\cdot u_h
        =-\nabh\Delta_{h,D}^{-1}\partial_3u_3,
\]
and the derivative is transferred to a smooth test function in the \(H^{-m}\)-pairing.

Let \((v_h,\pi)\) solve \eqref{eq:RNS} with \(v_h(s)=a_s\), and set \(V=(v_h,0)\).

\begin{assumption}[Localized relaxed weak--strong stability]
\label{ass:relaxed-stability}
Assume \(\Phi(1)\le M\), \(C_3(1)=\delta\le1\), and Assumption~\ref{ass:relaxed-strong}.  For the relaxed comparison field \(V\) constructed from the good-time projected datum, there exist constants \(C_S(M,\theta)\ge1\) and \(\gamma>0\) such that
\begin{equation}
        \norm{u-V}{L^3(Q_{2\theta})}
        \le C_S(M,\theta)\delta^\gamma.
        \label{eq:relaxed-stability}
\end{equation}
\end{assumption}

\begin{remark}
Formally, this is suggested by a localized relative-energy inequality for \(W=u-V\).  The residual in \eqref{eq:RNS-3D} contributes
\[
        \int \partial_3\pi\,u_3,
\]
which is small because \(\partial_3\pi\) is bounded by Assumption~\ref{ass:relaxed-strong} and \(u_3\) is small in \(L^3\).  In a bounded local cylinder, cutoff and pressure terms must be treated carefully.  Assumption~\ref{ass:relaxed-stability} records the closed local estimate needed for the conditional theorem.
\end{remark}

\subsection{Pressure reconstruction and power-type decay}

The pressure \(\pi\) in \eqref{eq:RNS} differs from the pressure entering \(D(r)\).  Define the local Navier--Stokes-compatible pressure generated by \(V\):
\[
        Q_V=R_iR_j(\chi V_iV_j)=R_aR_b(\chi v_av_b),
        \qquad a,b\in\{1,2\},
\]
where \(\chi\) is equal to one on the target interior cylinder.

\begin{proposition}[Pressure reconstruction]
\label{prop:pressure-reconstruction}
Under Assumptions~\ref{ass:relaxed-strong} and \ref{ass:relaxed-stability}, there exists a pressure corrector \(h\), harmonic in space on \(Q_\theta\), such that
\begin{equation}
        \norm{p-Q_V-h}{L^{3/2}(Q_\theta)}
        \le C(M,\theta)\norm{u-V}{L^3(Q_{2\theta})}.
        \label{eq:pressure-reconstruction}
\end{equation}
Consequently,
\[
        \norm{p-Q_V-h}{L^{3/2}(Q_\theta)}
        \le C(M,\theta)\delta^\gamma,
\]
with \(\gamma\) as in \eqref{eq:relaxed-stability}.
\end{proposition}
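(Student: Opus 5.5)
We decompose the Navier--Stokes pressure locally and compare it term by term with \(Q_V\), placing every non-compact contribution into a spatially harmonic corrector.

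Fix a cutoff \(\chi\in C_c^\infty(B_{2\theta})\) with \(\chi\equiv1\) on a neighborhood of \(B_{3\theta/2}\). Use the same \(\chi\) in the definition of \(Q_V\). On \(Q_{3\theta/2}\), after subtracting a function of time, write
\[
        p=R_iR_j(\chi u_iu_j)+h_0,
\]
where \(h_0(\cdot,t)\) is harmonic in \(B_{3\theta/2}\) for a.e. \(t\). This is the decomposition of Lemma~\ref{lem:pressure-decomp} at scale \(2\theta\). Since \(\Delta p=-\partial_i\partial_j(u_iu_j)\) in distributions, \(h_0=p-R_iR_j(\chi u_iu_j)\) has vanishing spatial Laplacian wherever \(\chi\equiv1\). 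Set
\[
        h=h_0+c(t),
\]
with \(c(t)\) absorbing the subtracted time function. Then \(h\in\calH(Q_\theta)\), and
\[
        p-Q_V-h=R_iR_j\bigl(\chi(u_iu_j-V_iV_j)\bigr).
\]
The key observation is that \(Q_V\) is \emph{not} built from the relaxed pressure \(\pi\). All of \(\pi\), including its nonzero \(\partial_3\pi\) part, is simply never used. The whole difference is therefore a Calder\'on--Zygmund image of a quadratic velocity difference, with no harmonic mismatch left over.

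Next we estimate this difference. Riesz transforms are bounded on \(L^{3/2}(\R^3)\) for a.e. time, so
\[
        \norm{p-Q_V-h}{L^{3/2}(Q_\theta)}
        \le C\norm{u\otimes u-V\otimes V}{L^{3/2}(Q_{2\theta})}.
\]
Write
\[
        u\otimes u-V\otimes V=(u-V)\otimes u+V\otimes(u-V).
\]
By H\"older's inequality this is bounded by
\[
        \bigl(\norm{u}{L^3(Q_{2\theta})}+\norm{V}{L^3(Q_{2\theta})}\bigr)\norm{u-V}{L^3(Q_{2\theta})}.
\]
The two prefactors are controlled as follows.
\begin{itemize}
\item \(\norm{u}{L^3(Q_{2\theta})}\le C M^{1/3}\), from \(C(1)\le\Phi(1)\le M\).
\item \(\norm{V}{L^3(Q_{2\theta})}\le C(\theta)K_B(M,\theta)\), by Assumption~\ref{ass:relaxed-strong}, since \(Q_{2\theta}\) lies inside the buffered cylinder.
\end{itemize}
Together these give \eqref{eq:pressure-reconstruction}. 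The consequence then follows directly from Assumption~\ref{ass:relaxed-stability}.

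The main point requiring care is the first step: checking that \(h\) is genuinely harmonic on all of \(B_\theta\) for a.e. time. This needs \(\chi\equiv1\) on a slightly larger ball, together with the distributional identity \(\Delta p=-\partial_i\partial_j(u_iu_j)\) obtained by taking the divergence of \eqref{eq:NS}. A second compatibility check is needed because \(Q_V\) as stated uses a cutoff \(\chi\) equal to one on the target cylinder. We must choose that \(\chi\) supported in \(B_{2\theta}\), so that only values of \(u-V\) on \(Q_{2\theta}\) enter the estimate. If a different cutoff were prescribed, the mismatch
\[
        R_iR_j\bigl((\chi-\chi')u_iu_j\bigr)
\]
would be harmonic on the common region where both cutoffs equal one. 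It could therefore also be absorbed into \(h\).
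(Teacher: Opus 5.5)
Your proposal is correct and follows the paper's proof essentially verbatim: the same local decomposition \(p=R_iR_j(\chi u_iu_j)+h\), the identity \(p-Q_V-h=R_iR_j(\chi(u_iu_j-V_iV_j))\), the Calder\'on--Zygmund bound in \(L^{3/2}\), and the splitting \((u-V)\otimes u+V\otimes(u-V)\) closed with \(C(1)\le M\) and the buffered bound on \(V\); your explicit choice of \(\chi\) supported in \(B_{2\theta}\) makes precise what the paper leaves implicit. One minor slip in your closing aside: if \(Q_V\) were built with a different cutoff \(\chi'\), the term to absorb into \(h\) is \(R_iR_j((\chi'-\chi)V_iV_j)\), which is harmonic where \(\chi=\chi'\), not \(R_iR_j((\chi-\chi')u_iu_j)\).
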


\begin{proof}
Use the local pressure decomposition
\[
        p=R_iR_j(\chi u_iu_j)+h,
\]
where \(h\) is harmonic in space on the interior cylinder.  Then
\[
        p-Q_V-h
        =R_iR_j(\chi(u_iu_j-V_iV_j)).
\]
Calderon--Zygmund estimates give
\[
        \norm{p-Q_V-h}{L^{3/2}(Q_\theta)}
        \le C\norm{u\otimes u-V\otimes V}{L^{3/2}(Q_{2\theta})}.
\]
Since
\[
        u\otimes u-V\otimes V=(u-V)\otimes u+V\otimes(u-V),
\]
the right-hand side is bounded by \(C(M,\theta)\norm{u-V}{L^3(Q_{2\theta})}\), using the local \(L^3\)-bound for \(u\) and the buffered strong bound for \(V\).
\end{proof}

Define the relaxed harmonic excess
\[
\calE^{\rel,\harm}_\theta(u,p;V,Q_V,h)
=\theta^{-2}\int_{Q_\theta}|u-V|^3\dxdt
+\theta^{-2}\int_{Q_\theta}|p-Q_V-h|^{3/2}\dxdt.
\]

\begin{theorem}[Hybrid relaxed harmonic approximation]
\label{thm:relaxed-harmonic-approx}
Assume \(\Phi(1)\le M\), \(C_3(1)=\delta\le1\), and Assumptions~\ref{ass:relaxed-strong} and \ref{ass:relaxed-stability}.  Then there exist a smooth relaxed no-stretching comparison pair \((V,Q_V)\) and a spatially harmonic pressure corrector \(h\in\calH(Q_\theta)\) such that
\begin{equation}
        \calE^{\rel,\harm}_\theta(u,p;V,Q_V,h)
        \le C(M,\theta)\delta^\Gamma
        \label{eq:relaxed-harmonic-approx}
\end{equation}
for some \(\Gamma>0\) depending only on the exponent in Assumption~\ref{ass:relaxed-stability}.
\end{theorem}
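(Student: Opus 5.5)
The plan is to assemble the two conditional inputs with Proposition~\ref{prop:pressure-reconstruction}; no new analysis is needed beyond bookkeeping of exponents. Take \(V=(v_h,0)\) to be the relaxed no-stretching field generated from the good-time projected datum \(a_s\) of \eqref{eq:projection-error}. Assumption~\ref{ass:relaxed-strong} makes \(V\) smooth, with the bound \eqref{eq:buffered-bound} on a buffered cylinder containing \(Q_{2\theta}\). Take \(Q_V=R_aR_b(\chi v_av_b)\) with \(\chi\equiv1\) on \(Q_{2\theta}\), and take \(h\) to be the spatially harmonic part of the local decomposition \(p=R_iR_j(\chi u_iu_j)+h\) on the interior cylinder. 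After subtracting a function of time from \(p\), which changes neither the equation nor the quotient, \(h\in\calH(Q_\theta)\).

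For the velocity term, Assumption~\ref{ass:relaxed-stability} and \(Q_\theta\subset Q_{2\theta}\) give
\[
\theta^{-2}\int_{Q_\theta}|u-V|^3\dxdt\le \theta^{-2}C_S(M,\theta)^3\delta^{3\gamma}.
\]
For the pressure term, Proposition~\ref{prop:pressure-reconstruction} gives \(\norm{p-Q_V-h}{L^{3/2}(Q_\theta)}\le C(M,\theta)\delta^\gamma\). Raising this to the power \(3/2\) yields
\[
\theta^{-2}\int_{Q_\theta}|p-Q_V-h|^{3/2}\dxdt\le C(M,\theta)\delta^{3\gamma/2}.
\]
Since \(\delta\le1\), we have \(\delta^{3\gamma}\le\delta^{3\gamma/2}\). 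Summing the two bounds therefore gives \eqref{eq:relaxed-harmonic-approx} with \(\Gamma=3\gamma/2\), and this \(\Gamma\) depends only on \(\gamma\), as the statement requires.

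The one point to check carefully is whether the Calderón--Zygmund step in Proposition~\ref{prop:pressure-reconstruction} is legitimate. It uses the \(L^3(Q_{2\theta})\) bound on \(u\), which follows from \(\Phi(1)\le M\) and fixed-scale nesting. It also uses the \(L^\infty\) bound on \(V\) on the support of \(\chi\), so the cutoff must be supported inside the buffered cylinder where \eqref{eq:buffered-bound} holds. I would choose \(\chi\) supported in \(Q_{3\theta}\), which lies inside the buffered region fixed in Assumption~\ref{ass:relaxed-strong}.

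The main obstacle is a mismatch of domains. \(R_iR_j(\chi(u_iu_j-V_iV_j))\) involves \(u-V\) on the support of \(\chi\), whereas the stability bound controls \(u-V\) only on \(Q_{2\theta}\). I would resolve this by taking \(\chi\equiv1\) on \(B_\theta\) and supported in \(B_{2\theta}\). The harmonic part \(h\) is then harmonic in \(B_\theta\), which is all that membership in \(\calH(Q_\theta)\) requires, and the Calderón--Zygmund bound only sees \(u-V\) on \(Q_{2\theta}\).

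With this choice, \(Q_V\) is a smooth, local, Navier--Stokes-compatible pressure for \(V\). It is the quantity to which the subsequent decay argument applies harmonic oscillation decay (Lemma~\ref{lem:harmonic-decay}), in the same way as in Theorem~\ref{thm:finite-decay-proof}.
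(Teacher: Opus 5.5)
Your proposal is correct and matches the paper's proof: the velocity term comes from Assumption~\ref{ass:relaxed-stability} restricted to \(Q_\theta\subset Q_{2\theta}\), the pressure term comes from Proposition~\ref{prop:pressure-reconstruction}, and \(\Gamma=\min\{3\gamma,3\gamma/2\}=3\gamma/2\) because \(\delta\le1\). Your final cutoff choice, \(\chi\equiv1\) on \(B_\theta\) and supported in \(B_{2\theta}\), makes explicit a point the paper leaves implicit in the reconstruction step. You should delete your earlier, conflicting choices (\(\chi\equiv1\) on \(Q_{2\theta}\), or support in \(Q_{3\theta}\)), because with those the Calder\'on--Zygmund bound would see \(u-V\) outside \(Q_{2\theta}\), where the stability estimate gives no control.
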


\begin{proof}
By Assumption~\ref{ass:relaxed-stability}, \(\norm{u-V}{L^3(Q_\theta)}\le C(M,\theta)\delta^\gamma\).  Hence
\[
        \theta^{-2}\int_{Q_\theta}|u-V|^3\dxdt
        \le C(M,\theta)\delta^{3\gamma}.
\]
By Proposition~\ref{prop:pressure-reconstruction},
\[
        \theta^{-2}\int_{Q_\theta}|p-Q_V-h|^{3/2}\dxdt
        \le C(M,\theta)\delta^{3\gamma/2}.
\]
Set \(\Gamma=\min\{3\gamma,3\gamma/2\}\), decreasing it if necessary.
\end{proof}

The comparison pair is smooth in the buffered target cylinder.  Thus there exist \(\alpha>0\) and \(C(M,\theta)\) such that
\begin{equation}
        C_V(r)+D_{Q_V}(r)
        \le C(M,\theta)r^\alpha,
        \qquad 0<r<r_H(M,\theta).
        \label{eq:smooth-comparison-decay}
\end{equation}
For a \(C^1\)-comparison one may take any fixed \(\alpha\le1\); with more regularity one obtains better exponents.  The argument below only uses positivity of this exponent.

\begin{theorem}[Conditional hybrid power-type finite-scale decay]
\label{thm:power-decay-proof}
Assume Assumptions~\ref{ass:relaxed-strong} and \ref{ass:relaxed-stability}.  There exist constants \(C_H(M,\theta)\ge1\), \(\alpha>0\), \(\Gamma>0\), and \(r_H(M,\theta)>0\) such that every suitable weak solution in \(Q_1\) with \(\Phi(1)\le M\) and \(C_3(1)=\delta\le1\) obeys
\begin{equation}
        \Psi(r)
        \le C_H(M,\theta)r^\alpha+C_H(M,\theta)r^{-2}\delta^\Gamma,
        \qquad 0<r<r_H(M,\theta).
        \label{eq:power-decay}
\end{equation}
\end{theorem}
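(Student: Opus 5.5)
The plan is to repeat the proof of Theorem~\ref{thm:finite-decay-proof}, with the strict limiting pair \((v,q)\) replaced by the relaxed comparison pair \((V,Q_V)\) of Theorem~\ref{thm:relaxed-harmonic-approx}. The modulus \(\omega_{M,\theta}(\delta)\) is replaced by \(C(M,\theta)\delta^\Gamma\), and the linear decay of the limiting class by the power decay \eqref{eq:smooth-comparison-decay}. Fix \(0<\theta<1/16\). By Theorem~\ref{thm:relaxed-harmonic-approx}, choose \(V\), \(Q_V\) and \(h\in\calH(Q_\theta)\) with
\[
\theta^{-2}\int_{Q_\theta}|u-V|^3\dxdt+\theta^{-2}\int_{Q_\theta}|g|^{3/2}\dxdt\le C(M,\theta)\delta^\Gamma,
\qquad g:=p-Q_V-h .
\]
Take \(r_H\le\theta/2\), and also below the radius in \eqref{eq:smooth-comparison-decay}.

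\emph{Velocity.} For \(0<r<r_H\), the elementary inequality \(|u|^3\le 4(|V|^3+|u-V|^3)\) gives
\[
C_u(r)\le CC_V(r)+Cr^{-2}\int_{Q_\theta}|u-V|^3\dxdt\le C(M,\theta)r^\alpha+C(M,\theta)r^{-2}\delta^\Gamma .
\]
\emph{Pressure.} Split the oscillation as
\[
p-(p)_{B_r}=(Q_V-(Q_V)_{B_r})+(h-(h)_{B_r})+(g-(g)_{B_r}).
\]
The first piece is bounded by \eqref{eq:smooth-comparison-decay}, giving \(D_{Q_V}(r)\le C(M,\theta)r^\alpha\). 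The last piece is controlled by the excess together with the fact that the mean nearly minimizes the \(L^{3/2}\) distance, giving \(D_g(r)\le C_\theta r^{-2}\delta^\Gamma\).

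For the harmonic piece, first write \(h=p-Q_V-g\) and compare averages to get
\[
D_h(\theta)\le CD_p(\theta)+CD_{Q_V}(\theta)+CD_g(\theta).
\]
Here \(D_p(\theta)\le C(M,\theta)\) by Lemma~\ref{lem:pressure-nesting}. The term \(D_{Q_V}(\theta)\) is bounded via Calder\'on--Zygmund and the buffered bound \eqref{eq:buffered-bound}, and \(D_g(\theta)\le C\delta^\Gamma\le C\) since \(\delta\le1\). Lemma~\ref{lem:harmonic-decay} then gives, for \(r\le\theta/2\),
\[
D_h(r)\le C(M,\theta)r^{5/2}\le C(M,\theta)r^\alpha,
\]
using \(r<1\) and, without loss of generality, \(\alpha\le1\). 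Summing these four bounds gives \eqref{eq:power-decay}.

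\emph{Main obstacle.} The delicate point is justifying \eqref{eq:smooth-comparison-decay} for \(Q_V\) rather than for the relaxed pressure \(\pi\). The velocity part is immediate: \(|V|\le K_B\) gives \(C_V(r)\le CK_B^3r^3\). For \(Q_V=R_aR_b(\chi v_av_b)\), the Riesz transforms are nonlocal, so a bound on \(V\) alone does not immediately give pointwise control. I would first try the following. Since \(\chi v_av_b\) is \(C^1\) with compact support and bounded by \(K_B\), it lies in \(C^{0,\beta}\) for some \(\beta<1\). Riesz transforms preserve \(C^{0,\beta}\) for compactly supported data, with bounds in terms of \(K_B\). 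Hence
\[
|Q_V-(Q_V)_{B_r}|\le Cr^\beta \quad\text{on } B_r,
\]
which gives \(D_{Q_V}(r)\le Cr^{-2}r^5r^{3\beta/2}\), a positive power. If this fails, the fallback is the argument of Proposition~\ref{prop:limiting-decay}: use the \(L^4\) Calder\'on--Zygmund bound and H\"older to obtain \(D_{Q_V}(r)\le Cr^{9/8}\). Either route yields some \(\alpha>0\), which is all the argument needs.
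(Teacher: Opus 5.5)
Your proposal is correct and follows the paper's proof essentially step by step: the same elementary splitting \(|u|^3\le C|V|^3+C|u-V|^3\) for the velocity, the same decomposition \(p=Q_V+h+g\) for the pressure, and the same use of Lemma~\ref{lem:harmonic-decay} after bounding \(D_h(\theta)\) at the fixed scale via the decomposition and the excess bound. The only addition is that you justify \eqref{eq:smooth-comparison-decay} for \(Q_V\), via H\"older bounds for Riesz transforms or the \(L^4\) Calder\'on--Zygmund fallback of Proposition~\ref{prop:limiting-decay}; the paper simply asserts this decay from the smoothness of the comparison pair.
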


\begin{proof}
Let \((V,Q_V,h)\) be the comparison triple from Theorem~\ref{thm:relaxed-harmonic-approx}.  For the velocity part,
\[
        |u|^3\le C|V|^3+C|u-V|^3,
\]
so
\[
        C_u(r)
        \le C C_V(r)+Cr^{-2}\int_{Q_r}|u-V|^3\dxdt.
\]
Using \eqref{eq:smooth-comparison-decay} and \eqref{eq:relaxed-harmonic-approx},
\[
        C_u(r)
        \le C(M,\theta)r^\alpha+C(M,\theta)r^{-2}\delta^\Gamma.
\]
For the pressure, write
\[
        p=Q_V+h+g,
        \qquad
        g=p-Q_V-h.
\]
The comparison pressure decays by \eqref{eq:smooth-comparison-decay}.  The harmonic part is controlled by Lemma~\ref{lem:harmonic-decay}; its fixed-scale oscillation is bounded by \(C(M,\theta)\) from the decomposition and \eqref{eq:relaxed-harmonic-approx}.  The error \(g\) contributes \(Cr^{-2}\delta^\Gamma\).  Thus
\[
        D_p(r)
        \le C(M,\theta)r^\alpha+C(M,\theta)r^{-2}\delta^\Gamma.
\]
Adding the estimates for \(C_u(r)\) and \(D_p(r)\) gives \eqref{eq:power-decay}.
\end{proof}

\begin{corollary}[Conditional power-type CKN scales]
\label{cor:power-radius}
Under Assumptions~\ref{ass:relaxed-strong} and \ref{ass:relaxed-stability}, the decay estimate \eqref{eq:power-decay} yields an explicit admissible CKN scale
\[
        r_\delta\simeq \delta^{\Gamma/(\alpha+2)}.
\]
Consequently, there are constants \(c_H(M,\theta)>0\) and \(\delta_H(M,\theta)>0\) such that, if \(0<C_3(1)=\delta\le\delta_H(M,\theta)\), then
\[
        r_{\reg}(0,0)
        \ge c_H(M,\theta)\delta^{\Gamma/(\alpha+2)}.
\]
This traceable scale is not intended to be optimal for very small \(\delta\).  The same decay estimate also gives the stronger fixed-radius statement: after possibly decreasing \(\delta_H(M,\theta)\), there exists \(\rho_H(M,\theta)>0\) such that
\[
        r_{\reg}(0,0)
        \ge \rho_H(M,\theta)
\]
for all \(0<\delta\le\delta_H(M,\theta)\).
\end{corollary}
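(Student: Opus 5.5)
The plan is to feed the decay estimate \eqref{eq:power-decay} of Theorem~\ref{thm:power-decay-proof} into the CKN criterion, Theorem~\ref{thm:CKN}, at two different choices of scale. The first choice gives the explicit $\delta$-dependent radius; the second gives the fixed radius.

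For the explicit scale, choose
\[
        r_\delta=c\,\delta^{\Gamma/(\alpha+2)},
\]
where $c=c(M,\theta)\in(0,1]$ is to be fixed. With this choice the two terms in \eqref{eq:power-decay} scale the same way:
\[
        r_\delta^\alpha=c^\alpha\delta^{\Gamma\alpha/(\alpha+2)},
        \qquad
        r_\delta^{-2}\delta^\Gamma=c^{-2}\delta^{\Gamma-2\Gamma/(\alpha+2)}=c^{-2}\delta^{\Gamma\alpha/(\alpha+2)}.
\]
Hence
\[
        \Psi(r_\delta)\le C_H(c^\alpha+c^{-2})\,\delta^{\Gamma\alpha/(\alpha+2)}.
\]
Take $c=1$ for simplicity. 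Then take $\delta_H$ small enough that two things hold: $r_\delta<r_H(M,\theta)$, which is needed for \eqref{eq:power-decay} to apply, and $2C_H\delta^{\Gamma\alpha/(\alpha+2)}\le\eps_{\CKN}$. Both conditions are possible because the exponents are positive. Theorem~\ref{thm:CKN}, applied on $Q_{r_\delta}$ (where $(u,p)$ is suitable, since $Q_{r_\delta}\subset Q_1$), then gives $r_{\reg}(0,0)\ge\kappa r_\delta$. So the first claim holds with $c_H=\kappa$.

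For the fixed-radius statement, follow the proof of Theorem~\ref{thm:regularity-radius}. Set
\[
        r_*=\min\left\{\frac{r_H}{2},\left(\frac{\eps_{\CKN}}{2C_H}\right)^{1/\alpha}\right\},
\]
so that $C_Hr_*^\alpha\le\eps_{\CKN}/2$. Then decrease $\delta_H$ further so that $C_Hr_*^{-2}\delta_H^\Gamma\le\eps_{\CKN}/2$. For every $\delta\le\delta_H$, \eqref{eq:power-decay} gives $\Psi(r_*)\le\eps_{\CKN}$, and CKN yields $r_{\reg}(0,0)\ge\rho_H:=\kappa r_*$. This radius depends only on $M$ and $\theta$. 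This explains the remark in the statement: for small $\delta$ the fixed radius is at least as good as the traceable scale, because $\delta^{\Gamma/(\alpha+2)}\to0$.

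No step is a real obstacle, since the analytic content sits in Theorem~\ref{thm:power-decay-proof} and its assumptions. The only points needing care are the following.
\begin{itemize}
\item The chosen radii must lie in the admissible range $(0,r_H)$ and in $(0,1]$.
\item $\Psi$ and CKN must be used in the same normalization at the origin.
\item The constants $\alpha,\Gamma,C_H,r_H$ must be the ones supplied by Theorem~\ref{thm:power-decay-proof} for the fixed $\theta$, so that every choice above depends only on $(M,\theta)$.
\end{itemize}
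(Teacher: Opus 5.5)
Your proposal is correct and follows the paper's proof essentially line by line. The paper also sets $r_\delta=c_0\delta^{\Gamma/(\alpha+2)}$ with $c_0=1$, balances the two terms of \eqref{eq:power-decay} to get $\Psi(r_\delta)\le C_H(c_0^\alpha+c_0^{-2})\delta^{\alpha\Gamma/(\alpha+2)}$, takes $c_H=\kappa c_0$, and for the fixed radius uses exactly your $r_*=\min\{r_H/2,(\eps_{\CKN}/(2C_H))^{1/\alpha}\}$ with $\rho_H=\kappa r_*$. The only detail the paper states explicitly and you leave implicit is choosing $\delta_H\in(0,1]$, which is needed to meet the hypothesis $\delta\le1$ of Theorem~\ref{thm:power-decay-proof}.
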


\begin{proof}
Fix a constant \(0<c_0\le 1\), for instance \(c_0=1\), and set
\[
        r_\delta=c_0\delta^{\Gamma/(\alpha+2)}.
\]
Choose \(\delta_H\in(0,1]\) sufficiently small so that
\[
        c_0\delta_H^{\Gamma/(\alpha+2)}<r_H(M,\theta)
\]
and
\[
        C_H(M,\theta)\bigl(c_0^\alpha+c_0^{-2}\bigr)
        \delta_H^{\alpha\Gamma/(\alpha+2)}
        \le \eps_{\CKN}.
\]
Then, for every \(0<\delta\le\delta_H\), one has \(0<r_\delta<r_H(M,\theta)\).  Applying \eqref{eq:power-decay} at the scale \(r_\delta\) gives
\[
\begin{aligned}
        \Psi(r_\delta)
        &\le C_H r_\delta^\alpha+C_Hr_\delta^{-2}\delta^\Gamma  \\
        &= C_Hc_0^\alpha\delta^{\alpha\Gamma/(\alpha+2)}
        +C_Hc_0^{-2}\delta^{-2\Gamma/(\alpha+2)}\delta^\Gamma  \\
        &= C_H\bigl(c_0^\alpha+c_0^{-2}\bigr)
        \delta^{\alpha\Gamma/(\alpha+2)}
        \le \eps_{\CKN}.
\end{aligned}
\]
Theorem~\ref{thm:CKN} gives
\[
        r_{\reg}(0,0)\ge \kappa r_\delta
        =\kappa c_0\delta^{\Gamma/(\alpha+2)}.
\]
Thus the first conclusion holds with \(c_H(M,\theta)=\kappa c_0\).

For the fixed-radius statement, choose
\[
        r_M=\min\left\{\frac{r_H(M,\theta)}2,
        \left(\frac{\eps_{\CKN}}{2C_H(M,\theta)}\right)^{1/\alpha}\right\}.
\]
Then \(C_Hr_M^\alpha\le\eps_{\CKN}/2\).  Decrease \(\delta_H\), if necessary, so that
\[
        C_Hr_M^{-2}\delta_H^\Gamma\le\eps_{\CKN}/2.
\]
For every \(0<\delta\le\delta_H\), \eqref{eq:power-decay} gives \(\Psi(r_M)\le\eps_{\CKN}\).  Hence Theorem~\ref{thm:CKN} gives
\[
        r_{\reg}(0,0)\ge \rho_H(M,\theta),
        \qquad \rho_H(M,\theta)=\kappa r_M.
\]
\end{proof}

Theorem~\ref{thm:power-main} follows from Theorem~\ref{thm:power-decay-proof} and Corollary~\ref{cor:power-radius}.

\section{Remarks and limitations}
\label{sec:remarks}

\begin{remark}[Finite-scale and qualitative character]
The unconditional theorem is finite-scale because it produces a positive radius \(\rho_*(M)\) at which regularity is guaranteed.  The constants \(\rho_*(M)\) and \(\eps_*(M)\) are obtained through the compactness modulus \(\omega_{M,\theta}\), so the statement remains qualitative.
\end{remark}

\begin{remark}[Role of harmonic pressure]
The proof keeps estimates of \(\norm{\nabla q}{L^\infty}\) for the complete limiting pressure outside the argument.  The non-harmonic pressure is controlled by local Calderon--Zygmund estimates in finite \(L^s\)-spaces.  The harmonic part is kept in the excess and later controlled by harmonic oscillation decay.
\end{remark}

\begin{remark}[What the logarithmic layer proves]
The logarithmic layer records Assumption~\ref{ass:prepared-comparison} as an input and shows that a specific prepared comparison estimate, with an exponential-in-\(\ell^{-1}\) stability loss but with the smoothing error outside that exponential, is enough to replace the compactness modulus by a logarithmic one.  This isolates the next quantitative problem.
\end{remark}

\begin{remark}[What the relaxed layer proves]
The relaxed layer uses a smooth no-stretching comparison pair modulo harmonic pressure in place of an exact strict limiting-system solution satisfying \(\partial_3q=0\).  This relaxed comparison is sufficient for \(\Psi=C+D\)-decay because the CKN criterion requires smallness of \(C+D\) at one scale.
\end{remark}

\begin{remark}[Remaining quantitative inputs]
The remaining inputs for a power-rate theorem are Assumptions~\ref{ass:relaxed-strong} and \ref{ass:relaxed-stability}.  The first asks for a buffered strong bound for the relaxed no-stretching comparison flow.  The second asks for a localized relaxed weak--strong stability estimate controlling cutoff and pressure terms.  Once these inputs are available, the pressure reconstruction argument gives the power-type decay estimate \eqref{eq:power-decay}.
\end{remark}

\begin{remark}[Possible refinements]
A stronger quantitative theorem may follow from one of two approaches.  The first is to prove the buffered relaxed strong bound directly from the scalar no-stretching vorticity equation and close the localized relaxed weak--strong stability estimate with cutoffs and pressure terms.  The second is to establish a Reynolds-stress harmonic approximation theorem in which the smoothing commutator contributes as a static subscale energy with no Gronwall amplification.  Both approaches aim to replace the abstract compactness modulus by a traceable logarithmic or power rate.
\end{remark}

\backmatter


\begin{thebibliography}{99}
	
	\bibitem{Leray1934}
	J. Leray,
	Sur le mouvement d'un liquide visqueux emplissant l'espace,
	\emph{Acta Mathematica} \textbf{63} (1934), 193--248.
	\newblock DOI: \url{https://doi.org/10.1007/BF02547354}.
	
	\bibitem{Hopf1951}
	E. Hopf,
	\"Uber die Anfangswertaufgabe f\"ur die hydrodynamischen Grundgleichungen,
	\emph{Mathematische Nachrichten} \textbf{4} (1951), 213--231.
	\newblock DOI: \url{https://doi.org/10.1002/mana.3210040121}.
	
	\bibitem{Prodi1959}
	G. Prodi,
	Un teorema di unicit\`a per le equazioni di Navier--Stokes,
	\emph{Annali di Matematica Pura ed Applicata} \textbf{48} (1959), 173--182.
	\newblock DOI: \url{https://doi.org/10.1007/BF02410664}.
	
	\bibitem{Serrin1962}
	J. Serrin,
	On the interior regularity of weak solutions of the Navier--Stokes equations,
	\emph{Archive for Rational Mechanics and Analysis} \textbf{9} (1962), 187--195.
	\newblock DOI: \url{https://doi.org/10.1007/BF00253344}.
	

	\bibitem{Scheffer1976}
	V. Scheffer,
	Partial regularity of solutions to the Navier--Stokes equations,
	\emph{Pacific Journal of Mathematics} \textbf{66} (1976), no. 2, 535--552.
	\newblock DOI: \url{https://doi.org/10.2140/pjm.1976.66.535}.
	
	\bibitem{Scheffer1977}
	V. Scheffer,
	Hausdorff measure and the Navier--Stokes equations,
	\emph{Communications in Mathematical Physics} \textbf{55} (1977), 97--112.
	\newblock DOI: \url{https://doi.org/10.1007/BF01626512}.
	
	\bibitem{CKN1982}
	L. Caffarelli, R. Kohn, and L. Nirenberg,
	Partial regularity of suitable weak solutions of the Navier--Stokes equations,
	\emph{Communications on Pure and Applied Mathematics} \textbf{35} (1982), no. 6, 771--831.
	\newblock DOI: \url{https://doi.org/10.1002/cpa.3160350604}.
	
	\bibitem{SohrWahl1986}
	H. Sohr and W. von Wahl,
	On the regularity of the pressure of weak solutions of Navier--Stokes equations,
	\emph{Archiv der Mathematik} \textbf{46} (1986), no. 5, 428--439.
	\newblock DOI: \url{https://doi.org/10.1007/BF01210782}.
	
	\bibitem{Struwe1988}
	M. Struwe,
	On partial regularity results for the Navier--Stokes equations,
	\emph{Communications on Pure and Applied Mathematics} \textbf{41} (1988), no. 4, 437--458.
	\newblock DOI: \url{https://doi.org/10.1002/cpa.3160410404}.
	
	\bibitem{KozonoSohr1997}
	H. Kozono and H. Sohr,
	Regularity criterion on weak solutions to the Navier--Stokes equations,
	\emph{Advances in Differential Equations} \textbf{2} (1997), no. 4, 535--554.
	\newblock DOI: \url{https://doi.org/10.57262/ade/1366741147}.
	
	\bibitem{Lin1998}
	F.-H. Lin,
	A new proof of the Caffarelli--Kohn--Nirenberg theorem,
	\emph{Communications on Pure and Applied Mathematics} \textbf{51} (1998), no. 3, 241--257.
	\newblock DOI: \url{https://doi.org/10.1002/(SICI)1097-0312(199803)51:3<241::AID-CPA2>3.0.CO;2-A}.
	
	\bibitem{LadyzhenskayaSeregin1999}
	O. A. Ladyzhenskaya and G. A. Seregin,
	On partial regularity of suitable weak solutions to the three-dimensional Navier--Stokes equations,
	\emph{Journal of Mathematical Fluid Mechanics} \textbf{1} (1999), 356--387.
	\newblock DOI: \url{https://doi.org/10.1007/s000210050015}.
	
	
	\bibitem{ChoeLewis2000}
	H. J. Choe and J. L. Lewis,
	On the singular set in the Navier--Stokes equations,
	\emph{Journal of Functional Analysis} \textbf{175} (2000), no. 2, 348--369.
	\newblock DOI: \url{https://doi.org/10.1006/jfan.2000.3582}.
	
	\bibitem{SereginSverak2002}
	G. A. Seregin and V. \v{S}ver\'ak,
	Navier--Stokes equations with lower bounds on the pressure,
	\emph{Archive for Rational Mechanics and Analysis} \textbf{163} (2002), no. 1, 65--86.
	\newblock DOI: \url{https://doi.org/10.1007/s002050200199}.
	
	\bibitem{EscauriazaSereginSverak2003}
	L. Escauriaza, G. Seregin, and V. \v{S}ver\'ak,
	\(L_{3,\infty}\)-solutions of Navier--Stokes equations and backward uniqueness,
	\emph{Russian Mathematical Surveys} \textbf{58} (2003), no. 2, 211--250.
	\newblock DOI: \url{https://doi.org/10.1070/RM2003v058n02ABEH000609}.
	
	\bibitem{PenelPokorny2004}
	P. Penel and M. Pokorn\'y,
	Some new regularity criteria for the Navier--Stokes equations containing gradient of the velocity,
	\emph{Applications of Mathematics} \textbf{49} (2004), no. 5, 483--493.
	\newblock DOI: \url{https://doi.org/10.1023/B:APOM.0000048124.64244.7e}.
	
	\bibitem{KukavicaZiane2006}
	I. Kukavica and M. Ziane,
	One component regularity for the Navier--Stokes equations,
	\emph{Nonlinearity} \textbf{19} (2006), no. 2, 453--469.
	\newblock DOI: \url{https://doi.org/10.1088/0951-7715/19/2/012}.
	
	\bibitem{KukavicaZiane2007}
	I. Kukavica and M. Ziane,
	Navier--Stokes equations with regularity in one direction,
	\emph{Journal of Mathematical Physics} \textbf{48} (2007), no. 6, 065203, 10 pp.
	\newblock DOI: \url{https://doi.org/10.1063/1.2395919}.
	
	\bibitem{Seregin2007Morrey}
	G. A. Seregin,
	Estimates of suitable weak solutions to the Navier--Stokes equations in critical Morrey spaces,
	\emph{Journal of Mathematical Sciences} \textbf{143} (2007), no. 2, 2961--2968.
	\newblock DOI: \url{https://doi.org/10.1007/s10958-007-0178-2}.
	
	\bibitem{Seregin2007Local}
	G. A. Seregin,
	Local regularity for suitable weak solutions of the Navier--Stokes equations,
	\emph{Russian Mathematical Surveys} \textbf{62} (2007), no. 3, 595--614.
	\newblock DOI: \url{https://doi.org/10.1070/RM2007v062n03ABEH004415}.
	
	\bibitem{GustafsonKangTsai2007}
	S. Gustafson, K. Kang, and T.-P. Tsai,
	Interior regularity criteria for suitable weak solutions of the Navier--Stokes equations,
	\emph{Communications in Mathematical Physics} \textbf{273} (2007), 161--176.
	\newblock DOI: \url{https://doi.org/10.1007/s00220-007-0214-6}.
	
	\bibitem{Vasseur2007}
	A. Vasseur,
	A new proof of partial regularity of solutions to Navier--Stokes equations,
	\emph{Nonlinear Differential Equations and Applications} \textbf{14} (2007), 753--785.
	\newblock DOI: \url{https://doi.org/10.1007/s00030-007-6001-4}.
	
	\bibitem{ZhouPokorny2009}
	Y. Zhou and M. Pokorn\'y,
	On a regularity criterion for the Navier--Stokes equations involving gradient of one velocity component,
	\emph{Journal of Mathematical Physics} \textbf{50} (2009), no. 12, 123514, 11 pp.
	\newblock DOI: \url{https://doi.org/10.1063/1.3268589}.
	
	\bibitem{CaoTiti2011}
	C. Cao and E. S. Titi,
	Global regularity criterion for the 3D Navier--Stokes equations involving one entry of the velocity gradient tensor,
	\emph{Archive for Rational Mechanics and Analysis} \textbf{202} (2011), 919--932.
	\newblock DOI: \url{https://doi.org/10.1007/s00205-011-0439-6}.
	
	\bibitem{JiaSverak2014}
	H. Jia and V. \v{S}ver\'ak,
	Local-in-space estimates near initial time for weak solutions of the Navier--Stokes equations and forward self-similar solutions,
	\emph{Inventiones Mathematicae} \textbf{196} (2014), 233--265.
	\newblock DOI: \url{https://doi.org/10.1007/s00222-013-0468-x}.
	
	\bibitem{Seregin2015}
	G. A. Seregin,
	\emph{Lecture Notes on Regularity Theory for the Navier--Stokes Equations},
	World Scientific, Hackensack, NJ, 2015.
	\newblock DOI: \url{https://doi.org/10.1142/9314}.
	
	\bibitem{CheminZhang2016}
	J.-Y. Chemin and P. Zhang,
	On the critical one component regularity for 3-D Navier--Stokes system,
	\emph{Annales Scientifiques de l'\'Ecole Normale Sup\'erieure} \textbf{49} (2016), no. 1, 131--167.
	\newblock DOI: \url{https://doi.org/10.24033/asens.2278}.
	
	\bibitem{CheminZhangZhang2017}
	J.-Y. Chemin, P. Zhang, and Z. Zhang,
	On the critical one component regularity for 3-D Navier--Stokes system: general case,
	\emph{Archive for Rational Mechanics and Analysis} \textbf{224} (2017), no. 3, 871--905.
	\newblock DOI: \url{https://doi.org/10.1007/s00205-017-1089-0}.
	
	\bibitem{GuevaraPhuc2017}
	C. Guevara and N. C. Phuc,
	Local energy bounds and epsilon-regularity criteria for the 3D Navier--Stokes system,
	\emph{Calculus of Variations and Partial Differential Equations} \textbf{56} (2017), no. 3, article no. 68, 16 pp.
	\newblock DOI: \url{https://doi.org/10.1007/s00526-017-1151-7}.
	
	\bibitem{KukavicaRusinZiane2017}
	I. Kukavica, W. Rusin, and M. Ziane,
	An anisotropic partial regularity criterion for the Navier--Stokes equations,
	\emph{Journal of Mathematical Fluid Mechanics} \textbf{19} (2017), 123--133.
	\newblock DOI: \url{https://doi.org/10.1007/s00021-016-0278-1}.
	
	\bibitem{Wolf2017}
	J. Wolf,
	On the local pressure of the Navier--Stokes equations and related systems,
	\emph{Advances in Differential Equations} \textbf{22} (2017), no. 5/6, 305--338.
	\newblock DOI: \url{https://doi.org/10.57262/ade/1489802453}.
	
	\bibitem{HanLeiLiZhao2019}
	B. Han, Z. Lei, D. Li, and N. Zhao,
	Sharp one component regularity for Navier--Stokes,
	\emph{Archive for Rational Mechanics and Analysis} \textbf{231} (2019), 939--970.
	\newblock DOI: \url{https://doi.org/10.1007/s00205-018-1292-7}.
	
	\bibitem{BarkerPrange2021}
	T. Barker and C. Prange,
	Quantitative regularity for the Navier--Stokes equations via spatial concentration,
	\emph{Communications in Mathematical Physics} \textbf{385} (2021), 717--792.
	\newblock DOI: \url{https://doi.org/10.1007/s00220-021-04122-x}.
	
	\bibitem{KangNguyen2023}
	K. Kang and D. D. Nguyen,
	Local regularity criteria in terms of one velocity component for the Navier--Stokes equations,
	\emph{Journal of Mathematical Fluid Mechanics} \textbf{25} (2023), article no. 10, 15 pp.
	\newblock DOI: \url{https://doi.org/10.1007/s00021-022-00754-8}.
	
	\bibitem{AlbrittonBarkerPrange2023}
	D. Albritton, T. Barker, and C. Prange,
	Epsilon regularity for the Navier--Stokes equations via weak--strong uniqueness,
	\emph{Journal of Mathematical Fluid Mechanics} \textbf{25} (2023), article no. 49, 12 pp.
	\newblock DOI: \url{https://doi.org/10.1007/s00021-023-00780-0}.
	
\end{thebibliography}
\end{document}